\newcommand{\TBiot}{T^{\mathrm{Biot}}}
\newcommand{\Wiso}{W_{\mathrm{iso}}}
\newcommand{\stress}{T}
\renewcommand{\stretch}{P}
\newcommand{\hTBiot}{\widehat T^{\mathrm{Biot}}}
\newcommand{\etss}{\mathrm{(E}\text{-}\mathrm{TSS)}}
\newcommand{\wetss}{\mathrm{(WE}\text{-}\mathrm{TSS)}}
\newcommand{\bep}{\mathrm{(BE^+)}}
\newcommand{\semi}{\mathrm{(semi)}}
\newcommand{\citet}[2][]{\citeauthor{#2} \cite[#1]{#2}}
\renewcommand{\Rp}{\R^+}
\newcommand{\sigmalin}{\sigma_{\rm lin}}
\newcommand{\TBiothat}{\widehat{T}^{\rm Biot}}
\newcommand{\Wsym}{W_{\mathrm{tc}}}
\newcommand{\changed}[1]
	{\textcolor{blue}{#1}}
\newcommand{\longchange}
	{\color{blue}}
\begin{document}
\title{\vspace{-2cm}Shear, pure and simple}
\date{\today}
\author{%
	Christian Thiel\thanks{
		Christian Thiel,  \ \ Lehrstuhl f\"{u}r Nichtlineare Analysis und Modellierung, Fakult\"{a}t f\"{u}r Mathematik, Universit\"{a}t Duisburg-Essen,  Thea-Leymann Str. 9, 45127 Essen, Germany, email: christian.thiel@uni-due.de
		}\quad and\quad%
	Jendrik Voss\thanks{%
		Jendrik Voss,\quad Lehrstuhl f\"{u}r Nichtlineare Analysis und Modellierung, Fakult\"{a}t f\"{u}r Mathematik, Universit\"{a}t Duisburg-Essen, Thea-Leymann Str. 9, 45127 Essen, Germany; email: max.voss@uni-due.de%
	}\quad and\quad%
	Robert J.\ Martin\thanks{%
		Robert J.\ Martin,\quad Lehrstuhl f\"{u}r Nichtlineare Analysis und Modellierung, Fakult\"{a}t f\"{u}r Mathematik, Universit\"{a}t Duisburg-Essen, Thea-Leymann Str. 9, 45127 Essen, Germany; email: robert.martin@uni-due.de%
	}\quad and\quad%
	Patrizio Neff\thanks{%
		Corresponding author: Patrizio Neff,\quad Head of Lehrstuhl f\"{u}r Nichtlineare Analysis und Modellierung, Fakult\"{a}t f\"{u}r	Mathematik, Universit\"{a}t Duisburg-Essen, Thea-Leymann Str. 9, 45127 Essen, Germany, email: patrizio.neff@uni-due.de%
		}
}
\maketitle
\begin{abstract}
In a 2012 article in the International Journal of Non-Linear Mechanics, Destrade et al.\ showed that for nonlinear elastic materials satisfying Truesdell's so-called empirical inequalities, the deformation corresponding to a Cauchy pure shear stress is not a simple shear. Similar results can be found in a 2011 article of L.\ A.\ Mihai and A.\ Goriely. We confirm their results under weakened assumptions and consider the case of a shear load, i.e.\ a Biot pure shear stress. In addition, conditions under which Cauchy pure shear stresses correspond to (idealized) pure shear stretch tensors are stated and a new notion of idealized \emph{finite simple shear} is introduced, showing that for certain classes of nonlinear materials, the results by Destrade et al.\ can be simplified considerably.
\end{abstract}
{\textbf{Key words:} nonlinear elasticity, finite isotropic elasticity, constitutive inequalities, stress tensors, Cauchy stress, isotropic nonlinear elasticity, isotropic tensor functions, stress increases with strain, constitutive law, conjugate stress-strain pairs, hyperelasticity, Cauchy-elasticity, empirical inequalities, adscititious inequalities, Poynting-effect, Kelvin-effect}
\\[.65em]
\noindent {\bf AMS 2010 subject classification: 74B20, 74A20, 74A10}
{\parskip=-0.5mm \tableofcontents}
%
%
%
%
\section{Introduction}
\label{section:introduction}
The term \enquote{shear} describes a number of closely related, but distinct concepts which play an important role in linear and nonlinear elasticity theory. While the notion of a \emph{(pure) shear stress} $T=s\.(e_1\otimes e_2+e_2\otimes e_1)$ with $s\in\R$ is rather straightforward \cite{bvelik1998state,boulanger2004pure} (once the stress tensor is specified), the concept of a \emph{shear deformation} is rather ambiguous: typically, the term \emph{(simple) shear (deformation)} is used to describe a specific homogeneous deformation gradient $\nabla\varphi=F\in\GLp(3)$ of the form $F=\id+\gamma\.e_1\otimes e_2$ with the amount of shear $\gamma\in\R$. On the other hand, a stretch tensor $P\in\PSym(3)$ is often called a \emph{(pure) shear (stretch)} if the principal stretches $\lambda_1,\lambda_2,\lambda_3$ are of the form $\lambda_1=\lambda$, $\lambda_2=\frac1\lambda$ and $\lambda_3=1$ with $\lambda>0$, see e.g.\ \cite{horgan2010simple,moreira2013comparison,nunes2013simple}. Hence, a pure shear stretch is a three-dimensional homogeneous deformation of a body which is elongated in one direction while being shortened in equal ratio perpendicularly.

The latter requirement is satisfied by the stretch corresponding to any simple shear deformation $F$ with $\lambda=\frac 12(\sqrt{\gamma^2 + 4} + \gamma)$. Since any pure shear stretch corresponds to a simple shear $F$ by appropriate left- and right-hand rotation, these two notions of shear often get confounded in the literature. In a 2012 article, Destrade et al.\ \cite{destrade2012} clarified the relation between Cauchy pure shear stresses and simple shear deformations, demonstrating their incompatibility in nonlinear elasticity. Similar observations can be found in an article by Moon and Truesdell \cite{moon1974interpretation} and a more current paper by Mihai and Goriely \cite{mihai2011positive}. In the following, we extend their discussion on the notion of shear in nonlinear elasticity. Undoubtedly, the primary concept in nonlinear elasticity is the force (stress) which causes the deformation. Therefore, our answer as to what a shear deformation is starts from considering Cauchy pure shear stress tensors.

\subsection{Overview}

After a short introduction and a brief discussion of the linear case in Section \ref{section:shearInLinearElasticity}, we demonstrate in Section \ref{section:shearInNonlinearElasticity} that non-trivial Cauchy pure shear stress tensors never correspond to simple shear deformations for arbitrary non-linear isotropic elasticity laws. This result was previously obtained by Destrade et al.\ \cite{destrade2012} in the special case of constitutive laws satisfying Truesdell's so-called \emph{empirical inequalities}. We then provide a number of representations for the class of deformations which do correspond to Cauchy pure shear stresses, including the composition of a triaxial stretch with a simple shear deformation as given in \cite{destrade2012,moon1974interpretation,mihai2011positive}. 
In Appendix \ref{sec:Biot}, we extend these considerations to the case of Biot pure shear stress tensors and show that analogous results hold.

Based on this classification, we introduce constitutive requirements for hyperelastic laws which ensure that the principal stretches $\lambda_1,\lambda_2,\lambda_3$ corresponding to pure shear stresses are of the form $\lambda_1=\lambda$, $\lambda_2=\frac1\lambda$ and $\lambda_3=1$ and provide a representation of deformation gradients with this particular property in Sections \ref{section:finiteSimpleShear} and \ref{sec:Constitutive}.

While the results we obtain in Section \ref{section:shearInNonlinearElasticity} are not new \cite{moon1974interpretation,destrade2012,mihai2011positive}, we hope that by following a deliberately careful and detailed approach, we are able to clear up any confusion the reader might still face regarding the general relation between shear deformations and shear stresses in nonlinear elasticity. We also try to highlight the differences between the proofs given in \cite{moon1974interpretation}, \cite{destrade2012} and \cite{mihai2011positive}, especially with respect to their additional requirements on the constitutive law, cf.\ Remark \ref{remark:distinctEigenvaluesForPureShearStress} and Remark \ref{remark:differentProofs}.
%
%
\subsection{Different notions of shear}

The classical (homogeneous) \emph{simple shear deformation} with the deformation gradient tensor $F=\id+\gamma\.e_2\otimes e_1$ of a unit cube with the amount of shear $\gamma\in\Rp=(0,\infty)$ is shown in Figure \ref{fig:simpleShearDeformation}.
It is well known that in isotropic linear elasticity, the Cauchy stress tensor corresponding to deformations of this type is necessarily of the form $\sigma=s\.(e_1\otimes e_2+e_2\otimes e_1)$ with $s\in\R$, known as \emph{pure shear stress}, as visualized in Figure \ref{fig_shear}.

\begin{figure}[h!]
    \centering
    \tikzsetnextfilename{simpleShearDeformation}
    \begin{minipage}{0.6\textwidth}
      \begin{tikzpicture}
        \input{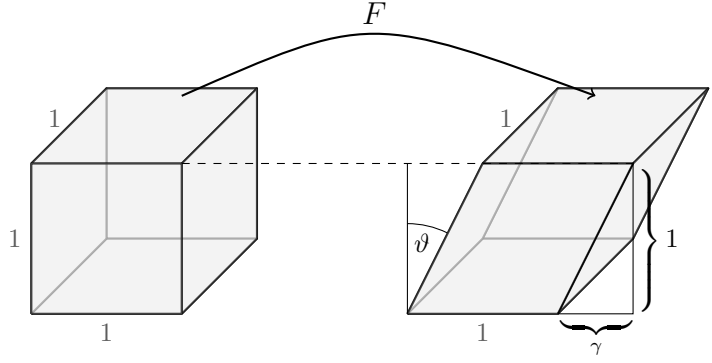}
      \end{tikzpicture}
    \end{minipage}
    \caption{Simple shear deformation with shear angle $\vartheta$ and amount of shear $\gamma=\tan(\vartheta)\,.$}		\label{fig:simpleShearDeformation}
\end{figure}

\begin{figure}[h]
    \centering
    \tikzsetnextfilename{shear}
    \begin{tikzpicture}
      \input{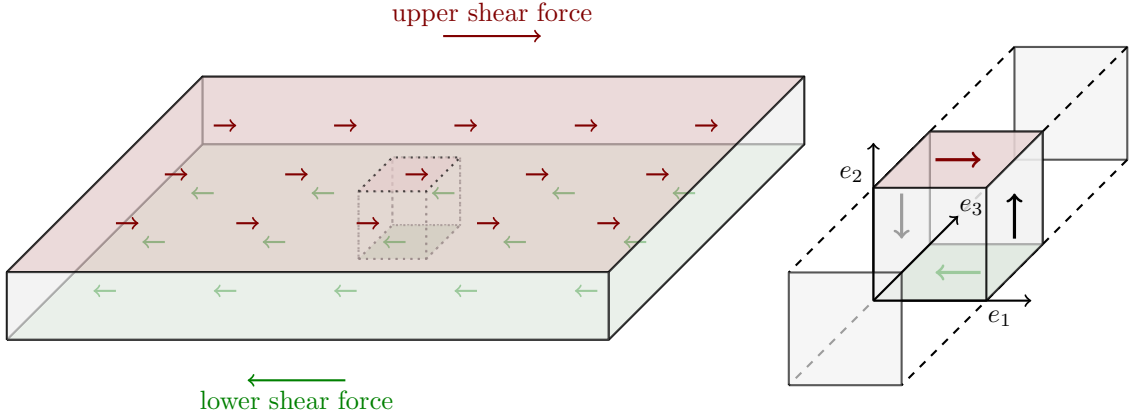}
    \end{tikzpicture}
    \caption{Visualization of pure shear stress. Right, Cauchy cube under pure shear stress in the current deformation.}\label{fig_shear}
\end{figure}
\fbox{\parbox{.98\textwidth}{
\begin{definitions}
\label{definition:linearSimpleShear}
	We call $F\in\GLp(3)$ a \emph{simple shear deformation} if $F$ has the form
	\begin{equation}\label{eq:definitionSimpleShear}
		F_\gamma = \matr{1&\gamma&0\\0&1&0\\0&0&1}=\id+\gamma\.e_2\otimes e_1\qquad\text{with $\gamma\in\R$ the amount of shear}
	\end{equation}
	and we call $T\in\Sym(3)$ \emph{pure shear stress}, if $T$ has the form
	\begin{equation}\label{eq:definitionPureShearStress}
		T^s = \matr{0&s&0\\s&0&0\\0&0&0}=s\.(e_1\otimes e_2+e_2\otimes e_1)\qquad\text{with $s \in\R$ the amount of shear stress.}
	\end{equation}
\end{definitions}}}
 
Here, $\GLp(3)=\{X\in\R^{3\times 3}\,|\,\det X>0\}$ is the group of invertible $3\times 3$ matrices with positive determinant and $\Sym(3)=\{X\in\R^{3\times 3}\,|\,X=X^T\}$ donates the group of symmetric matrices.
In the following, we will discuss the incompatibility of simple shear deformations and pure shear stress in nonlinear Cauchy elasticity \cite{moon1974interpretation,destrade2012,mihai2011positive}. As a substitute, we introduce the apparently new concept of (idealized) \emph{left} and \emph{right finite simple shear deformation} shown in Figure \ref{fig:finiteSimpleShearDeformation} as well as the (idealized) \emph{finite pure shear stretch}.\footnote{%
	The class of finite pure shear stretches is a multiplicative group \cite{beygelzimer2015equivalent}, also known as the \emph{group of hyperbolic rotations}. Note that $\exp\matr{0&\alpha\\\alpha&0}* \exp\matr{0&\beta\\\beta&0}=\exp\matr{0&\alpha+\beta\\\alpha+\beta&0}$ since $\matr{0&\alpha\\\alpha&0}$, $\matr{0&\beta\\\beta&0}$ commute, cf.~\cite[p.~39]{olver1995equivalence} and Figure \ref{fig:pureShearStretch}.
	To our knowledge, the finite pure shear stretch \eqref{eq:definitionPureShearStretch} was first mentioned by Claude Vall{\'e}e \cite{vallee1978} as the stretch induced by a Hencky-type logarithmic stress-strain relation under Cauchy pure shear stress.
}

\fbox{\parbox{.98\textwidth}{
\begin{definitions}\label{def:finiteSimpleShear}
	We call $F\in\GLp(3)$ an (idealized) \emph{left finite simple shear deformation gradient} if $F$ has the form
	\begin{equation}\label{eq:definitionLeftSimpleFiniteShear}
		F_\alpha = \frac{1}{\sqrt{\cosh(2\.\alpha)}}\, \matr{1&\sinh(2\.\alpha)&0\\ 0&\cosh(2\.\alpha)&0\\ 0&0&\sqrt{\cosh(2\.\alpha)}}\qquad\text{with $\alpha\in\R$}
	\end{equation}
	and an (idealized) \emph{right finite simple shear deformation gradient} if $F$ has the form
	\begin{equation}\label{eq:definitionRightSimpleFiniteShear}
		F_\alpha =  \frac{1}{\sqrt{\cosh(2\.\alpha)}}\, \matr{\cosh(2\.\alpha)&\sinh(2\.\alpha)&0\\ 0&1&0\\ 0&0&\sqrt{\cosh(2\.\alpha)}}\qquad\text{with $\alpha\in\R$.}
	\end{equation}
	Moreover, we call $V\in\PSym(3)$ a \emph{finite pure shear stretch} if $V$ has the form
	\begin{equation}\label{eq:definitionPureShearStretch}
		V_\alpha =  \matr{\cosh(\alpha)&\sinh(\alpha)&0\\\sinh(\alpha)&\cosh(\alpha)&0\\0&0&1}=\exp\matr{0&\alpha&0\\\alpha&0&0\\0&0&0}\qquad\text{with $\alpha\in\R$}\
	\end{equation}
	where $\exp$ donates the matrix exponential.
\end{definitions}}}

Lemma \ref{lemma:polarFiniteShear} states that for given $ \alpha \in \R $, the left and the right finite simple shear deformation are composed of exactly the same stretch and rotation, but applied in different order. Both the left and right finite simple shear deformations can be considered as finite extensions of the simple shear deformation due to their linearization (see Appendix \ref{appendix:LinearShear}); the canonical identification of the amount of shear in the infinitesimal case is $\gamma=2\.\alpha$.

\begin{figure}[h!]
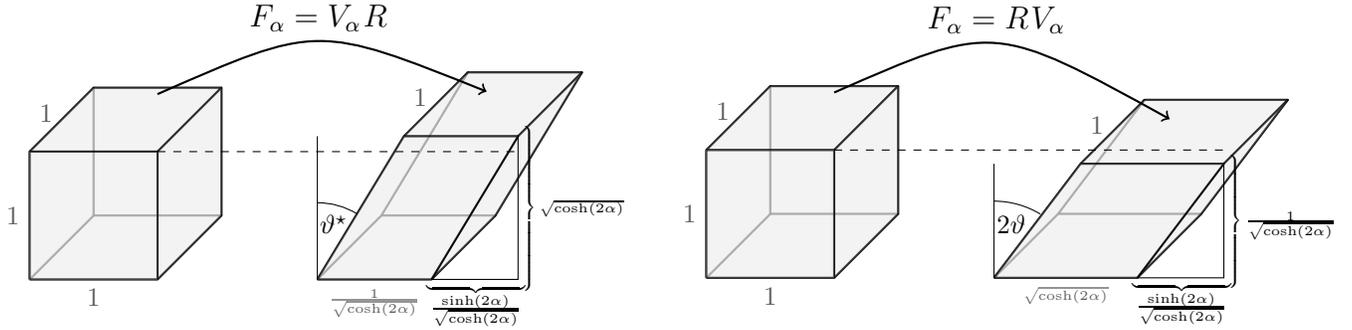

    \centering
    \tikzsetnextfilename{FiniteSimpleShearDeformation}  
    \begin{minipage}{0.4\textwidth}
       	\hspace{-2cm} \begin{tikzpicture}[scale=0.85]
        \input{tikz/tikz_leftFiniteSimpleShearDeformation.tex}
      \end{tikzpicture}
    \end{minipage}
    \begin{minipage}{0.4\textwidth}
      \begin{tikzpicture}[scale=0.85]
        \input{tikz/tikz_rightFiniteSimpleShearDeformation.tex}
      \end{tikzpicture}
    \end{minipage}
    \caption{(left figure:) Left finite simple shear deformation and (right figure:) right finite simple shear deformation with $\tan(\vartheta^\star)=\tanh(2\alpha)$ and $\tan(\vartheta) = \tanh(\alpha)$.}\label{fig:finiteSimpleShearDeformation}
\end{figure} 
\begin{figure}[h!]
    \centering
    \tikzsetnextfilename{pureShearStretch}
    \begin{minipage}{0.6\textwidth}
      \begin{tikzpicture}[scale=0.85]
        \input{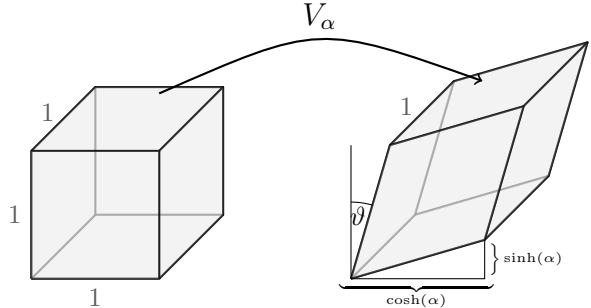}
      \end{tikzpicture}
    \end{minipage}
    \caption{Finite pure shear stretch with $\tan(\vartheta) = \tanh(\alpha)$.}\label{fig:pureShearStretch}
\end{figure}
%
%
%
%
\subsection{Shear in linear elasticity}\label{section:shearInLinearElasticity}
In isotropic linear elasticity, the stress response is induced by the quadratic energy density function $\Wlin(\eps)=\mu\.\norm{\eps}^2+\frac{\lambda}{2}\tr(\eps)^2=\mu\.\norm{\dev\eps}^2+\frac{\kappa}{2}\tr(\eps)^2$, where $\eps = \sym(F-\id)$, $\dev(X)=X-\frac{1}{3}\tr(X)\.\id$; here, $\mu$ denotes the infinitesimal shear modulus, $\kappa>0$ is the infinitesimal bulk modulus, and $\lambda$ is the first Lam\'e parameter. The corresponding linear Cauchy stress tensor is given by $\sigmalin = 2\.\mu\.\dev\eps + \kappa\.\tr(\eps)\.\id=2\.\mu\.\eps+\lambda\tr(\eps)\.\id\,.$ The following proposition describes the well-known relation between simple shear deformations and pure shear stress in the isotropic linear elasticity model.
\begin{proposition}
	The linear Cauchy shear stress $\sigmalin$ induced by a deformation $F\in\GLp(3)$ is a pure shear stress if and only if the deformation has the form
	\setlength{\belowdisplayskip}{0pt}
	\begin{equation}\label{eq_form_von_grad_u}
		F = \id + \underbrace{\begin{pmatrix} 0 & \frac \gamma 2 & 0 \\ \frac \gamma 2 & 0 & 0 \\ 0 & 0 & 0\end{pmatrix}}_{\eps} + A\qquad\text{with $\gamma\in\R$ and arbitrary $A\in\so(3)$.}
	\end{equation}
\end{proposition}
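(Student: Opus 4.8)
The plan is to exploit that the linear Cauchy stress $\sigmalin$ depends on $F$ only through the infinitesimal strain $\eps = \sym(F-\id)$, so that the skew-symmetric part of $F-\id$ plays no role whatsoever. Writing $F - \id = \eps + A$ with $\eps\in\Sym(3)$ and $A\in\so(3)$ its symmetric and skew-symmetric parts, the statement reduces to the purely linear-algebraic task of determining all $\eps\in\Sym(3)$ with $\sigmalin = T^s$ for some $s\in\R$, while $A$ remains entirely free. Since $\eps\mapsto\sigmalin = 2\.\mu\.\dev\eps + \kappa\.\tr(\eps)\.\id$ is linear, this amounts to inverting a single linear map on $\Sym(3)$.

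For the \enquote{if} direction I would simply substitute the claimed form: if $F = \id + \eps_0 + A$ with $\eps_0 = \frac{\gamma}{2}(e_1\otimes e_2 + e_2\otimes e_1)$ and $A\in\so(3)$, then $\sym(F-\id) = \eps_0$, because $\eps_0$ is symmetric and the symmetric part of $A$ vanishes. As $\tr(\eps_0) = 0$, we obtain $\sigmalin = 2\.\mu\.\eps_0 = \mu\.\gamma\.(e_1\otimes e_2 + e_2\otimes e_1)$, a pure shear stress with amount $s=\mu\.\gamma$. For the \enquote{only if} direction, suppose $\sigmalin = T^s$. Taking the trace in the representation $\sigmalin = 2\.\mu\.\dev\eps + \kappa\.\tr(\eps)\.\id$ and using $\tr(\dev\eps)=0$ gives $3\.\kappa\.\tr(\eps) = \tr(T^s) = 0$, hence $\tr(\eps)=0$ since $\kappa>0$. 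Then $\dev\eps = \eps$ and the stress reduces to $\sigmalin = 2\.\mu\.\eps$, so that $\eps = \frac{1}{2\.\mu}\.T^s = \frac{s}{2\.\mu}(e_1\otimes e_2 + e_2\otimes e_1)$; setting $\gamma := \frac{s}{\mu}$ yields exactly the matrix $\eps$ appearing in \eqref{eq_form_von_grad_u}, while the skew-symmetric part $A$ of $F-\id$ is left unconstrained.

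The only point requiring care — and the closest thing to an obstacle — is the invertibility of the stress-strain map, which is what makes $\gamma$ (equivalently $s$) uniquely recoverable from $\sigmalin$. The trace part is inverted using $\kappa>0$ (equivalently $2\.\mu+3\.\lambda = 3\.\kappa\neq0$) and the deviatoric part using $\mu\neq0$; together these guarantee that $\eps\mapsto\sigmalin$ is a bijection of $\Sym(3)$, so that the pure-shear condition on $\sigmalin$ pins down $\tr(\eps)$ and $\dev\eps$ completely. Everything else is routine linear algebra, and the genuinely important structural observation is simply that the skew-symmetric contribution $A$ to $F$ is invisible to $\sigmalin$ and hence arbitrary.
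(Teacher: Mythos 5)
Your proof is correct and follows essentially the same route as the paper: the same trace argument ($0=\tr(\sigmalin)=3\kappa\tr(\eps)$, using $\kappa>0$) to force $\tr(\eps)=0$, then inversion of $\sigmalin=2\mu\eps$ to recover $\eps$ with $\gamma=s/\mu$, with the skew-symmetric part $A$ unconstrained throughout. The additional remarks on invertibility of the stress-strain map are accurate but not needed beyond what the paper's proof already uses.
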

\begin{proof}
	The relation $\sigmalin = 2\.\mu\.\dev\eps + \kappa\.\tr(\eps)\.\id$ immediately implies that a deformation of the form \eqref{eq_form_von_grad_u} induces an infinitesimal Cauchy pure shear stress tensor with the amount of shear stress $s=\mu\gamma$.
	
	On the other hand, if $\sigmalin$ is a pure shear stress, then $0=\tr(\sigmalin)=\tr\left(2\mu\.\dev\eps+\kappa\.\tr(\eps)\.\id\right)=3\kappa\.\tr(\eps)$. Thus $\sigmalin=2\.\mu\.\eps$, i.e.
	\[
		2\.\mu\.\eps=\matr{0 & s & 0 \\ s & 0 & 0 \\ 0 & 0 & 0}
		\qquad\implies\qquad
		\eps = \matr{ 0 & \frac s{2\mu} & 0 \\ \frac s{2\mu} & 0 & 0 \\ 0 & 0 & 0}\,,
	\]
	which leads to a deformation of the form \eqref{eq_form_von_grad_u} with $\gamma = \frac s\mu$ and arbitrary $A\in\so(3)$.
\end{proof}
\begin{remark}
	Every simple shear deformation \eqref{eq:definitionSimpleShear} has the form \eqref{eq_form_von_grad_u} and therefore leads to an infinitesimal pure shear stress tensor $\sigmalin$:
	\setlength{\belowdisplayskip}{0pt}
	\[
		F_\gamma= \matr{1&\gamma&0\\0&1&0\\0&0&1}=\id+\matr{0&\gamma&0\\0&0&0\\0&0&0} = \id + \underbrace{\begin{pmatrix} 0 & \frac \gamma 2 & 0 \\ \frac \gamma 2 & 0 & 0 \\ 0 & 0 & 0\end{pmatrix}}_{\eps_\gamma\.\in\Sym(3)} + \underbrace{\matr{ 0 & \frac \gamma 2 & 0 \\ -\frac \gamma 2 & 0 & 0 \\ 0 & 0 & 0 }}_{\omega_\gamma\.\in\so(3)}=\id+\eps_\gamma+\omega_\gamma\qquad\text{with $\gamma\in\R$.} \qedhere
	\]
\end{remark}
Thus it is possible to \textbf{additively split} every deformation of the form \eqref{eq_form_von_grad_u}, which leads to a pure shear stress in the isotropic linear model, as follows (cf.\ Figure \ref{fig_split_simpleShearDeformation}):\par
\fbox{\parbox{.98\textwidth}{
\begin{definitions}\label{def:linearShear}
	We call 
	$\eps\in\Sym(3)$ an \emph{infinitesimal pure shear strain} if $\eps$ has the form
	\begin{equation}\label{eq:definitionLinearShear}
		\eps_\gamma = \matr{0&\frac{\gamma}{2}&0\\\frac{\gamma}{2}&0&0\\0&0&0}\qquad\text{with $\gamma\in\R$ the amount of shear,}
	\end{equation}
	and $\omega\in\so(3)$ an \emph{infinitesimal rotation of simple shear} if $\omega$ has the form
	\begin{equation}
		\omega_\gamma = \matr{0&\frac{\gamma}{2}&0\\-\frac{\gamma}{2}&0&0\\0&0&0}\qquad\text{with $\gamma\in\R$ the amount of shear.}
	\end{equation}
	In particular, the finite pure shear stretch $V_\alpha$ \eqref{eq:definitionPureShearStretch} is the matrix exponential extension of the infinitesimal pure shear strain $\eps_\gamma$, i.e.\ $V_\alpha=\exp(\eps_\gamma)$ with $\gamma=2\.\alpha$.
\end{definitions}}}\par
\begin{figure}[h!]
    \centering
    \tikzsetnextfilename{DecompositionSimpleShear}
    \begin{tikzpicture}
        \input{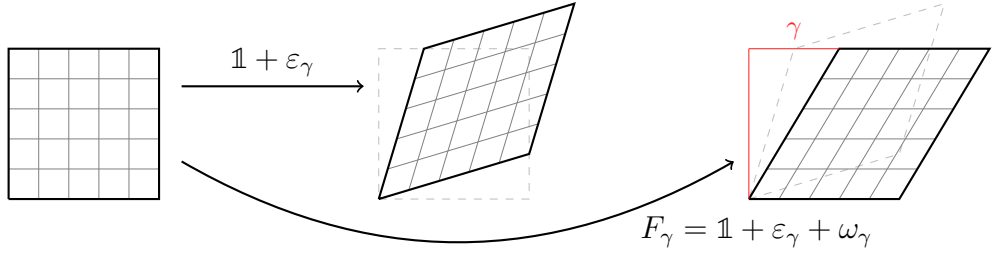}
    \end{tikzpicture}
    \caption{Additive split of the simple shear deformation $F_\gamma$ into infinitesimal pure shear strain $\eps_\gamma$ and infinitesimal rotation $\omega_\gamma$.}\label{fig_split_simpleShearDeformation}
\end{figure}

We observe that $F_\gamma=\id+\eps_\gamma+\omega_\gamma$ is of the form \eqref{eq:definitionSimpleShear} and that
\begin{itemize}
	\item[i)]  the deformation $F_\gamma$ is \emph{infinitesimally volume preserving}, i.e\ $\tr(F_\gamma-\id)=0$,
	\item[ii)] the deformation $F_\gamma$ is \emph{planar}, i.e.\ $F_\gamma$ has the eigenvalue $1$ to the eigenvector $e_3$,
	\item[iii)] the deformation $F_\gamma$ is \emph{ground parallel}, i.e.\ $e_1$ and $e_3$ are eigenvectors of $F_\gamma$.
\end{itemize}
\begin{remark}
\label{remark:linearStretchNotFinitelyVolumePreserving}
	Note carefully that the property $\det F_\gamma=1$ is of no consequence in linear elasticity since, in contrast to the finite case, the infinitesimal change of volume induced by the deformation gradient $F$ is described by $\tr(\eps)=\tr(F)-3$ and not the determinant of $F$. In particular, for $\gamma\neq0$, the corresponding infinitesimal pure shear strain $\eps_\gamma = \sym(F_\gamma-\id)$ is \emph{not} finitely volume preserving, i.e.\ $\det(\id+\eps_\gamma)\neq1$.
	
	The \enquote{accidental} fact that the simple shear deformation \eqref{eq:definitionSimpleShear} is volume preserving in the finite sense as well appears to be a major source of confusion, since it suggests that in nonlinear elasticity, a \enquote{shear} deformation should have the exact form $F_\gamma$. However, as the result by Destrade et al.\ \cite{destrade2012} clearly demonstrates, the finite generalization of simple shear deformations must take another form. In Section \ref{section:finiteSimpleShear}, we will show that deformations of the idealized form \eqref{eq:definitionLeftSimpleFiniteShear} are not only compatible with the linearized shear $F_\gamma$ and generalize the properties i)--iii), but can be compatible with Cauchy pure shear stresses as well.
\end{remark}
%
%
%
\section{Shear in nonlinear elasticity}
\label{section:shearInNonlinearElasticity}
The following theorem summarizes the aforementioned result by Destrade et al.\ \cite{destrade2012}.
\begin{theorem}[Destrade, Murphy and Saccomandi \cite{destrade2012}]\label{theorem:destrade}
	Consider an isotropic elasticity law that satisfies the empirical inequalities \cite{truesdell1952,truesdell65,truesdell1975inequalities,moon1974interpretation,truesdell1963static}
	\begin{align}
		\beta_0\leq 0\,,\qquad\beta_1>0\,,\qquad\beta_{-1}\leq 0\qquad\text{with}\qquad\sigma=\beta_0\.\id+\beta_1\.B+\beta_{-1}\.B^{-1}\,,\label{eq:empiricalInequalities}
	\end{align}
	also called the \textbf{e}mpirical \textbf{t}rue \textbf{s}tress \textbf{s}tretch $\etss$-relation \cite{agn_neff2015exponentiatedI}. If, for $F\in\GL(3)$, $\widehat\sigma(FF^T)=\sigma^s$ is a Cauchy pure shear stress, then there exists a triaxial stretch $\diag(a,b,c)$ and a simple shear deformation $F_\gamma$ such that $F = F_\gamma\cdot\diag(a,b,c)\cdot Q$ for some $Q\in\SO(3)$.
	Furthermore, if $s\neq0$ for the amount of stress $s$, then $\diag(a,b,c)\neq\id$, i.e.\ a simple shear deformation $F_\gamma\neq\id$ alone never corresponds to a nontrivial Cauchy pure shear stress of the form \eqref{eq:definitionPureShearStress}.
\end{theorem}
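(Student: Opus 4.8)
The plan is to transfer everything to the left Cauchy--Green tensor $B=FF^T$, on which the constitutive response $\widehat\sigma$ alone depends. The key reduction is the elementary fact that two matrices with the same $B$ differ by a right orthogonal factor: if $\widetilde F\,\widetilde F^T=FF^T$, then $\widetilde F^{-1}F$ is orthogonal. Hence it suffices to produce $\gamma\in\R$ and $a,b,c>0$ realizing
\begin{equation}\label{eq:plan:target}
	B \;=\; F_\gamma\,\diag(a^2,b^2,c^2)\,F_\gamma^T,
\end{equation}
for then $\widetilde F:=F_\gamma\,\diag(a,b,c)$ satisfies $\widetilde F\,\widetilde F^T=B=FF^T$, and $Q:=\widetilde F^{-1}F$ is orthogonal with $\det Q=\det F/(abc)$; in the physically relevant case $\det F>0$ this gives $Q\in\SO(3)$ and $F=F_\gamma\,\diag(a,b,c)\,Q$ as claimed.

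To establish \eqref{eq:plan:target} I would analyse the spectrum of $B$. The representation $\sigma=\beta_0\.\id+\beta_1\.B+\beta_{-1}\.B^{-1}$ shows at once that $\sigma$ commutes with $B$, so the two are coaxial. The pure shear stress $\sigma^s$ has eigenpairs $(s,u_1)$, $(-s,u_2)$ and $(0,e_3)$ with $u_{1,2}=\frac1{\sqrt2}(e_1\pm e_2)$. For $s\neq0$ these eigenvalues are distinct, so each eigenspace is one-dimensional and $u_1,u_2,e_3$ are forced to be eigenvectors of $B$ as well. In the degenerate case $s=0$ the empirical inequalities enter: the scalar response $g(b):=\beta_0+\beta_1\.b+\beta_{-1}\.b^{-1}$ has $g'(b)=\beta_1-\beta_{-1}\.b^{-2}\geq\beta_1>0$, hence $g$ is strictly increasing and injective, so $\sigma_i=g(b_i)=0$ for all $i$ forces $b_1=b_2=b_3$, i.e.\ $B$ spherical. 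In either case $B$ is diagonal in $\{u_1,u_2,e_3\}$, which in the standard basis reads
\begin{equation}\label{eq:plan:Bform}
	B=\matr{p&q&0\\q&p&0\\0&0&r},\qquad p=\tfrac{b_1+b_2}2,\quad q=\tfrac{b_1-b_2}2,\quad r=b_3,
\end{equation}
where $b_1,b_2,b_3>0$ denote the eigenvalues of $B$ along $u_1,u_2,e_3$.

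Comparing \eqref{eq:plan:Bform} with the explicit product
\begin{equation}\label{eq:plan:product}
	F_\gamma\,\diag(a^2,b^2,c^2)\,F_\gamma^T=\matr{a^2+b^2\gamma^2&b^2\gamma&0\\b^2\gamma&b^2&0\\0&0&c^2}
\end{equation}
leads to the system $b^2=p$, $\gamma=q/p$, $a^2=(p^2-q^2)/p=b_1b_2/p$ and $c^2=r$, whose solution satisfies $a,b,c>0$ since $p>0$, $r=b_3>0$ and $p^2-q^2=b_1b_2>0$; this settles the existence statement. For the addendum, suppose $s\neq0$ yet $\diag(a,b,c)=\id$, so that $B=F_\gamma F_\gamma^T$. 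Its upper-left block $\matr{1+\gamma^2&\gamma\\\gamma&1}$ has $e_1\pm e_2$ as eigenvectors only if its diagonal entries agree, i.e.\ only for $\gamma=0$; but coaxiality with $\sigma^s$ ($s\neq0$) requires $u_1,u_2$ to be eigenvectors of $B$, forcing $\gamma=0$ and hence $B=\id$. Then $\sigma=\widehat\sigma(\id)$ is spherical and cannot equal the nontrivial pure shear $\sigma^s$, a contradiction; therefore $\diag(a,b,c)\neq\id$.

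The main obstacle is not a single hard computation but the careful bookkeeping of coaxiality and the degenerate spectra, together with isolating \emph{where} the empirical inequalities are genuinely needed. The instructive point is that for $s\neq0$ the block structure of $B$ follows from coaxiality alone, so isotropy already does the work, whereas the inequalities are indispensable only in the degenerate case $s=0$, where the commutation relation with $\sigma=0$ carries no information; recognizing this separation is precisely what later permits the hypotheses to be weakened. A minor technical point to watch is the sign of $\det F$, which must be controlled so that the leftover orthogonal factor $Q$ lands in $\SO(3)$ rather than in $\mathrm{O}(3)$.
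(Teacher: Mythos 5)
Your proof is correct and follows essentially the same route as the paper's own treatment (Lemma \ref{lem_P_und_T_kommutieren} and Proposition \ref{prop_F_eindeutig_bestimmt_B}, summarized in Corollary \ref{corollary:destradeWir}): commutativity of $B$ with the pure shear stress, coaxiality forced by the three distinct eigenvalues $s,-s,0$, and the identical explicit solution $b^2=p$, $\gamma=q/p$, $a^2=(p^2-q^2)/p$, $c^2=r$. Your observation that the empirical inequalities are genuinely needed only in the degenerate case $s=0$ --- which you settle via monotonicity of $b\mapsto\beta_0+\beta_1\,b+\beta_{-1}\,b^{-1}$, whereas the paper's generalization replaces them by the assumption of a unique stress-free reference configuration --- is exactly the separation the paper itself emphasizes in Remarks \ref{remark:distinctEigenvaluesForPureShearStress} and \ref{remark:differentProofs}.
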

Destrade et al.\ implicitly utilize the so-called \emph{semi-invertibility} \cite{truesdell1975inequalities} of the Cauchy stress response function $\sigmahat\col FF^T=B\mapsto\sigmahat(B)=\sigma(\sqrt{FF^T})$, i.e.\ the representability
\begin{equation}\label{eq_repraesentationsformel}
	B = \psi_0(B)\.\id + \psi_1(B)\.\sigma + \psi_2(B)\.\sigma^2\qquad\text{for all $B\in\PSym(3)$}
\end{equation}
which is ensured\footnote{It should be noted that the much weaker condition of the strict Baker-Ericksen inequalities $\bep$ \cite{baker1954inequalities} would have been sufficient as well \cite{dunn1984elastic}, cf.\ the implication chain \enquote{$\etss \implies \wetss \implies \bep \implies \semi$} from \cite[p. 135]{thiel2017neue}.} by the empirical inequalities \eqref{eq:empiricalInequalities}. Note that \eqref{eq_repraesentationsformel} does not imply that the Cauchy stress-stretch law is invertible since the functions $\psi_i$ still depend on $B$.

\begin{remark}
	It is important to note that the condition of semi-invertibility, whether it is assumed directly or indirectly, always restricts the considered class of elasticity laws. In the following, we waive this restriction do not require any constitutive conditions to hold except the isotropy of the elasticity law and the uniqueness of the stress-free reference configuration, i.e.\ $\widehat\sigma(B) = 0$ if and only if $B = \id$.
\end{remark}
\begin{remark}
	A more general version of this result stated in Corollary \ref{corollary:destradeWir} was given by Moon and Truesdell \cite[(2.8)]{moon1974interpretation} and, implicitly, by Mihai and Goriely, cf.\ Remark \ref{remark:differentProofs}.
\end{remark}
Our computations are similar in part to those by Destrade et al.; however, instead of requiring the semi-invertibility \eqref{eq_repraesentationsformel}, we utilize the commutativity of the left Cauchy-Green deformation tensor $B=FF^T$ with the Cauchy stress tensor $\widehat\sigma(B)$ which holds for \textbf{any} isotropic stress response. Thereby, in \reflemma{lem_P_und_T_kommutieren}, we are able to determine the general form of all $B$ which commute
with a Cauchy pure shear stress which, in turn, allows us to compute the general form of the deformation gradient $F = F_\gamma\cdot\diag(a,b,c)\cdot Q$ in \refproposition{prop_F_eindeutig_bestimmt_B}. For a more detailed description regarding semi-invertibility and related matrix properties, see \cite{agn2018empirical}.

\begin{proposition}\label{prop_t_dianoalgestalt}
	Let $T\in\PSym(3)$ be a pure shear stress of the form \eqref{eq:definitionPureShearStress}. Then $T$ has the eigenvalues $s,-s,0$ with the corresponding eigenvectors $(\frac{\sqrt 2}2, \frac{\sqrt 2}2, 0)^T$, $(-\frac{\sqrt 2}2, \frac{\sqrt 2}2, 0)^T$ and $(0,0,1)^T$. In particular, $T$ can be diagonalized to $Q\.\diag(s,-s,0)\.Q^T=T$ with
	\begin{equation}\label{eq_Q}
		Q \colonequals \frac {\sqrt 2}2\begin{pmatrix} 1 & -1 & \;\;\,0 \\ 1 & \phantom{-}1 & \;\;\,0 \\ 0 & \phantom{-}0 & \sqrt 2 \end{pmatrix}\in \SO(3)\,.
	\end{equation}
\end{proposition}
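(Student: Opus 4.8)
The plan is to proceed by direct computation, exploiting the block structure of $T^s$ in \eqref{eq:definitionPureShearStress}. First I would note that, since the third row and column of $T^s$ vanish, the vector $(0,0,1)^T$ is immediately an eigenvector with eigenvalue $0$. This reduces the spectral problem to the upper-left $2\times2$ block $\matr{0&s\\s&0}$, whose characteristic polynomial is $\lambda^2-s^2$ and whose eigenvalues are therefore $\pm s$. Hence the full spectrum of $T^s$ is $\{s,-s,0\}$, as claimed.

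Next I would determine the eigenvectors by solving the three homogeneous systems $(T^s-\lambda\.\id)\.v=0$. For $\lambda=s$ the system forces $v_1=v_2$ and $v_3=0$, for $\lambda=-s$ it forces $v_1=-v_2$ and $v_3=0$, and for $\lambda=0$ it forces $v_1=v_2=0$; normalizing then yields $(\tfrac{\sqrt2}2,\tfrac{\sqrt2}2,0)^T$, $(-\tfrac{\sqrt2}2,\tfrac{\sqrt2}2,0)^T$ and $(0,0,1)^T$ respectively. These three vectors are mutually orthogonal --- automatically so for $s\neq0$, where the eigenvalues are distinct and the spectral theorem applies, but in any case verifiable directly --- and each has unit length.

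Finally I would collect these normalized eigenvectors as the columns of the matrix $Q$ in \eqref{eq_Q}. Orthonormality of the columns shows $Q\in\mathrm{O}(3)$, and a one-line expansion of the determinant along the third row gives $\det Q=\tfrac12+\tfrac12=1$, so in fact $Q\in\SO(3)$. The asserted diagonalization $Q\.\diag(s,-s,0)\.Q^T=T^s$ then follows immediately from the eigenvector property $T^s\.Q=Q\.\diag(s,-s,0)$ together with $Q^T=Q^{-1}$.

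There is no genuine obstacle here: the statement is a routine diagonalization of a matrix of very simple form, and the only points requiring a moment of care are checking that the chosen eigenvectors are normalized and, crucially, that $Q$ is orientation-preserving (determinant $+1$), so that it lies in $\SO(3)$ rather than merely $\mathrm{O}(3)$ --- a distinction that matters when $Q$ is later used as a rotation. The degenerate case $s=0$, where $T^s=0$ and every vector is an eigenvector, is consistent with the explicit choice of $Q$ above.
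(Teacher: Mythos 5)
Your proposal is correct, but it runs in the opposite direction from the paper's own proof. The paper takes $Q$ as given and simply verifies the identity $Q\.\diag(s,-s,0)\.Q^T=T$ by direct matrix multiplication; the eigenvalue and eigenvector claims then follow implicitly, since the columns of the orthogonal matrix $Q$ are eigenvectors of $T$ for the diagonal entries. You instead derive the spectral data from scratch: you exploit the block structure to split off the eigenvalue $0$, compute the eigenvalues $\pm s$ of the $2\times2$ block from its characteristic polynomial, solve the three linear systems for the eigenvectors, assemble $Q$ from them, and only then conclude the diagonalization from $T\.Q = Q\.\diag(s,-s,0)$ and $Q^T=Q^{-1}$. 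Your route is longer but constructive --- it explains where $Q$ comes from rather than pulling it out of thin air --- and it has the additional merit of explicitly checking $\det Q = +1$, i.e.\ the membership $Q\in\SO(3)$ rather than merely $\mathrm{O}(3)$, which the paper asserts in the statement but never verifies in its proof; this distinction matters later when $Q$ is used as a rotation. Conversely, the paper's verification is shorter and avoids the (harmless, but worth flagging, as you do) case distinction at $s=0$, where the eigenspaces degenerate and the eigenvector computation requires the caveat that the stated basis is merely one valid choice.
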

\begin{proof}
	We simply check
	\begin{align*}
		Q\begin{pmatrix} s & \phantom{-}0 & 0 \\ 0 & -s & 0 \\ 0 & \phantom{-}0 & 0 \end{pmatrix}Q^T &= \frac 12\begin{pmatrix} 1 & -1 & \;\;\,0 \\ 1 & \phantom{-}1 & \;\;\,0 \\ 0 & \phantom{-}0 & \sqrt 2 \end{pmatrix} \begin{pmatrix} s & \phantom{-}0 & 0 \\ 0 & -s & 0 \\ 0 & \phantom{-}0 & 0 \end{pmatrix} \begin{pmatrix} \phantom{-}1 & 1 & \;\;\,0 \\ -1 & 1 & \;\;\,0 \\ \phantom{-}0 & 0 & \sqrt 2 \end{pmatrix}=\frac 12\begin{pmatrix} 0 & 2s & 0 \\ 2s & 0 & 0 \\ 0 & 0 & 0 \end{pmatrix} = T\,.\qedhere
	\end{align*}
\end{proof}
In particular, a pure shear stress tensor is either trivial (the case of the stress-free zero tensor) or has \textbf{three distinct eigenvalues}.

Let $\widehat\sigma(B)$ be a pure shear stress for arbitrary $B=FF^T$ with $F\in\GLp(3)$. Since the assumed isotropy ensures that $B$ must commute with $\widehat\sigma(B)$, we consider the following lemma in order to deduce the specific form of $B$.

\begin{lemma}\label{lem_P_und_T_kommutieren}
	Let $T$ be a pure shear stress tensor with $s\neq 0$ and $P\in\PSym(3)$. Then $P$ commutes with $T$ if and only if $P$ has the form
	\begin{equation}
		P = \begin{pmatrix} p & q & 0 \\ q & p & 0 \\ 0 & 0 & r \end{pmatrix}\,.\label{form_von_P}
	\end{equation}
	Furthermore, $p=\frac 12(\mu_1+\mu_2)$, $q=\frac 12(\mu_1-\mu_2)$ and $r=\mu_3$, where $\mu_1,\mu_2,\mu_3 \in \Rp$ are the eigenvalues of $P$.
\end{lemma}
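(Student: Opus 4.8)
The plan is to prove both implications by exploiting the key structural fact from Proposition~\ref{prop_t_dianoalgestalt}: since $s\neq0$, the pure shear stress $T$ has three \emph{pairwise distinct} eigenvalues $s,-s,0$, with the fixed orthonormal eigenbasis given by the columns of $Q$ in \eqref{eq_Q}.

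The \enquote{if} direction is a direct verification. For $P$ of the form \eqref{form_von_P} I would multiply out $PT$ and $TP$ and observe that both equal $\begin{pmatrix} qs & ps & 0 \\ ps & qs & 0 \\ 0 & 0 & 0 \end{pmatrix}$, so $PT=TP$; since $P$ and $T$ are symmetric this can even be seen without computing $TP$ separately, because $PT$ is already symmetric and hence equals $(PT)^T=TP$.

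For the \enquote{only if} direction my preferred route is to invoke the standard fact that two symmetric matrices commute if and only if they are simultaneously diagonalizable. Because the eigenvalues of $T$ are pairwise distinct, each eigenspace is one-dimensional, so any $P\in\PSym(3)$ commuting with $T$ is forced to be diagonal in the eigenbasis of $T$, i.e.\ $P=Q\,\diag(\mu_1,\mu_2,\mu_3)\,Q^T$ for some $\mu_1,\mu_2,\mu_3\in\Rp$. Multiplying this out with the explicit $Q$ from \eqref{eq_Q} yields both the normal form \eqref{form_von_P} and the stated relations $p=\tfrac12(\mu_1+\mu_2)$, $q=\tfrac12(\mu_1-\mu_2)$, $r=\mu_3$ in one stroke. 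Alternatively, and perhaps more elementary for the reader, one can write $P=(p_{ij})$ symmetric and equate $PT$ and $TP$ entrywise; using $s\neq0$, the off-block entries force $p_{13}=p_{23}=0$ while the $(1,2)$-entry forces $p_{11}=p_{22}$, recovering \eqref{form_von_P}. In that case the eigenvalue formulas follow by diagonalizing the upper-left block $\begin{pmatrix} p & q \\ q & p \end{pmatrix}$, whose eigenvalues are $p\pm q$ (with eigenvectors $(1,\pm1)^T$), alongside the eigenvalue $r$ for $e_3$.

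The only genuinely delicate point is the hypothesis $s\neq0$: it is exactly what makes the three eigenvalues of $T$ distinct, and this is what forces $P$ to be \emph{diagonal} in the $Q$-basis rather than merely block-diagonal. For $s=0$ one has $T=0$, which commutes with every $P\in\PSym(3)$, so the normal form \eqref{form_von_P} genuinely fails there. A minor bookkeeping issue is to pin down the correspondence between the labels $\mu_1,\mu_2,\mu_3$ and the columns of $Q$ so that the signs in the formulas for $p$ and $q$ come out as stated; taking $\mu_1,\mu_2,\mu_3$ to be the eigenvalues belonging to the first, second and third columns of $Q$ respectively does the job.
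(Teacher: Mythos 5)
Your proposal is correct and takes essentially the same route as the paper's proof: simultaneous diagonalizability of commuting symmetric matrices, plus the distinctness of the eigenvalues of $T$ (which is exactly where $s\neq0$ enters), forces $P=Q\,\diag(\mu_1,\mu_2,\mu_3)\,Q^T$ with the $Q$ from \eqref{eq_Q}, and multiplying this out gives both the normal form \eqref{form_von_P} and the relations $p=\tfrac12(\mu_1+\mu_2)$, $q=\tfrac12(\mu_1-\mu_2)$, $r=\mu_3$, while the converse is the same direct verification. Your side remarks (that symmetry of $PT$ alone already yields $PT=TP$, and the alternative entrywise computation) are correct but inessential refinements of the same argument.
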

\begin{proof}
	Suppose $P$ and $T$ commute. Then $P$ and $T$ are simultaneously diagonalizable \cite[prop. 2.1]{agn2018empirical}. Furthermore, if $T$ has distinct eigenvalues, then $T$ is coaxial with $P$, i.e.\ each eigenvector of $T$ is an eigenvector of $P$. According to \refproposition{prop_t_dianoalgestalt}, $T$ can be diagonalized using $Q$ as in \eqref{eq_Q} and that the eigenvalues of $T$ are distinct if and only if $s\neq 0$, which holds by assumption. Therefore, $Q^TPQ$ must be in diagonal form as well. With $\mu_1,\mu_2,\mu_3\in\R$ denoting the eigenvalues of $P$ we find
	\begin{align*}
		P &=  Q\begin{pmatrix} \mu_1 & 0 & 0 \\ 0 & \mu_2 & 0 \\ 0 & 0 & \mu_3 \end{pmatrix}Q^T = \frac 12\begin{pmatrix} 1 & -1 & \;\;\,0 \\ 1 & \phantom{-}1 & \;\;\,0 \\ 0 & \phantom{-}0 & \sqrt 2 \end{pmatrix} \begin{pmatrix}\mu_1 & 0 & 0 \\ 0 & \mu_2 & 0 \\ 0 & 0 & \mu_3 \end{pmatrix} \begin{pmatrix} \phantom{-}1 & 1 & \;\;\,0 \\ -1 & 1 & \;\;\,0 \\ \phantom{-}0 & 0 & \sqrt 2 \end{pmatrix}\\
		&= \frac 12\begin{pmatrix} 1 & -1 & \;\;\,0 \\ 1 & \phantom{-}1 & \;\;\,0 \\ 0 & \phantom{-}0 & \sqrt 2 \end{pmatrix}\begin{pmatrix} \phantom{-}\mu_1 & \mu_1 & 0 \\ -\mu_2 & \mu_2 & 0 \\ 0 & 0 & \sqrt 2\mu_3\end{pmatrix}= \frac 12\begin{pmatrix} \mu_1 + \mu_2 & \mu_1 - \mu_2 & 0 \\ \mu_1 - \mu_2 & \mu_1 + \mu_2 & 0 \\ 0 & 0 & 2\mu_3\end{pmatrix}\,,
	\end{align*}
	thus $P$ is of the form \eqref{form_von_P}.

	Conversely, if $P$ is of the form \eqref{form_von_P}, then
	\[
		P\.T = \begin{pmatrix} p & q & 0 \\ q & p & 0 \\ 0 & 0 & r \end{pmatrix}\begin{pmatrix} 0 & s & 0 \\ s & 0 & 0 \\ 0 & 0 & 0 \end{pmatrix} = \begin{pmatrix} qs & ps & 0 \\ ps & qs & 0 \\ 0 & 0 & 0 \end{pmatrix} = \begin{pmatrix} 0 & s & 0 \\ s & 0 & 0 \\ 0 & 0 & 0 \end{pmatrix}\begin{pmatrix} p & q & 0 \\ q & p & 0 \\ 0 & 0 & r \end{pmatrix} = T\.P\,.\qedhere
	\]
\end{proof}
\begin{remark}\label{remark:distinctEigenvaluesForPureShearStress}
	Note that the proof of Lemma \ref{lem_P_und_T_kommutieren} makes use of the fact that a (non-trivial) pure shear stress has only simple eigenvalues; otherwise, it would not be possible to conclude that each eigenvector of $\stress$ is an eigenvector of $\stretch$ without additional assumptions. 
	
	In terms of elasticity tensors, this observation can be stated as follows: although for an isotropic material, each principal axis of (Eulerian) strain must be a principal axis of (Cauchy) stress, the reverse must not hold in general unless the principal stresses are pairwise distinct or additional constraints on the constitutive law are assumed to hold. Among the constitutive requirements which guarantee this \enquote{bi-coaxiality} of stress and strain are the (weak) empirical inequalities \cite{truesdell1956ungeloste,truesdell1975inequalities,truesdell1963static}, although the (weaker) strict Baker-Ericksen inequalities $\bep$ are sufficient as well \cite{dunn1984elastic}.  
	Moreover, \enquote{bi-coaxiality} is equivalent to semi-invertibility \eqref{eq_repraesentationsformel}. Again, recall the commutativity of the left Cauchy-Green deformation tensor $B$ with the Cauchy stress tensor $\sigmahat(B)$ and of the right Cauchy-Green deformation tensor $C$ with the Biot stress tensor $\TBiot(C)$ for any isotropic stress response, cf.\ \cite[p.193, Theorem 4.2.4]{Ogden83}.
\end{remark}
According to Lemma \ref{lem_P_und_T_kommutieren}, in order for $B = FF^T$ or $C = F^TF$ to commute with a pure shear stress tensor $T$, the Cauchy-Green deformation tensors $B,C$ must be of the form
\begin{equation}
	P=\begin{pmatrix} p & q & 0 \\ q & p & 0 \\ 0 & 0 & r \end{pmatrix}\, = \frac 12\begin{pmatrix} \lambda_1^2 + \lambda_2^2 & \lambda_1^2 - \lambda_2^2 & 0 \\ \lambda_1^2 - \lambda_2^2 & \lambda_1^1 + \lambda_2^2 & 0 \\ 0 & 0 & 2\lambda_3^2 \end{pmatrix}\label{eq:PinEigenvalues}
\end{equation}
with arbitrary singular values $\lambda_1,\lambda_2,\lambda_3$ of $F$, i.e.\ eigenvalues $\lambda_1^2,\lambda_2^2,\lambda_3^2$ of $B$ and $C$. Moreover,
\begin{equation}
	\sqrt{P}=\begin{pmatrix} p & q & 0 \\ q & p & 0 \\ 0 & 0 & r \end{pmatrix}^{\mathrlap{\!\nicefrac 12}}\, = \frac 12\begin{pmatrix} \lambda_1 + \lambda_2 & \lambda_1 - \lambda_2 & 0 \\ \lambda_1 - \lambda_2 & \lambda_1 + \lambda_2 & 0 \\ 0 & 0 & 2\lambda_3 \end{pmatrix}\,.\label{eq:squarerootGeneralForm}
\end{equation}

Using this representation, we can now determine the general form of all $F\in\GLp(3)$ (independent of the particular elasticity law) which are able to correspond to a Cauchy-Green tensor of the form \eqref{eq:PinEigenvalues} and thus to a Cauchy pure shear stress. In particular, we can confirm that $F$ cannot have the form of a simple shear deformation:

\begin{remark}
\label{remark:simpleShearBC}
	Let $F$ be a simple shear deformation gradient with $\gamma\neq 0$ and $T$ a pure shear stress tensor with $s\neq 0$. Because
	\begin{equation}
		F F^T = \begin{pmatrix} 1 & \gamma & 0 \\ 0 & 1 & 0 \\ 0 & 0 & 1 \end{pmatrix} \begin{pmatrix} 1 & 0 & 0 \\ \gamma & 1 & 0 \\ 0 & 0 & 1 \end{pmatrix} = \begin{pmatrix} 1 + \gamma^2 & \gamma & 0 \\ \gamma & 1 & 0 \\ 0 & 0 & 1 \end{pmatrix}
	\end{equation}
	and
	\begin{equation}
		F^T F = \begin{pmatrix} 1 & 0 & 0 \\ \gamma & 1 & 0 \\ 0 & 0 & 1 \end{pmatrix} \begin{pmatrix} 1 & \gamma & 0 \\ 0 & 1 & 0 \\ 0 & 0 & 1 \end{pmatrix} = \begin{pmatrix} 1 & \gamma & 0 \\ \gamma & 1 + \gamma^2 & 0 \\ 0 & 0 & 1 \end{pmatrix}
	\end{equation}
	are both not of the form \eqref{form_von_P}, neither $B=FF^T$ or $C=F^TF$ commutes with $T$.
\end{remark}
Instead, up to an arbitrary pure rotation, $F$ must take the form of a simple shear composed with a \emph{triaxial stretch}. Now let $\sigmahat(B)$ be a Cauchy pure shear stress. Then due to Lemma \ref{lem_P_und_T_kommutieren}, the left Cauchy-Green deformation tensor $B$ must be of the form \eqref{form_von_P}, which allows us to compute the general form of $F$ itself:

\begin{proposition}\label{prop_F_eindeutig_bestimmt_B}
	Let $B\in\PSym(3)$ be given by \eqref{form_von_P} with $p>\abs{q}$ and $r>0$. Then $F \in \GLp(3)$ with $FF^T=B$ is uniquely determined by
	\begin{equation}\label{eq:general_form}
		F =F_\gamma\.\diag(a,b,c)\,Q= \begin{pmatrix} 1 & \gamma & 0 \\ 0 & 1 & 0 \\ 0 & 0 & 1 \end{pmatrix} \begin{pmatrix} a & 0 & 0 \\ 0 & b & 0 \\ 0 & 0 & c \end{pmatrix}\,Q\,,
	\end{equation}
	up to an arbitrary $Q\in\SO(3)$, where
	\begin{align}
		a = \sqrt{\frac{p^2-q^2}p}\,,\qquad b = \sqrt{p} = \sqrt{B_{11}}\,,\qquad c = \sqrt{r}\qquad\text{and}\quad \gamma = \frac qp\label{eq:abcInpq}
	\end{align}
	or, in terms of the singular values $\lambda_1,\lambda_2,\lambda_3\in\R_+$ of $F$,
	\begin{align}
		a=\lambda_1\lambda_2\sqrt{\frac 2{\lambda_1^2+\lambda_2^2}} \,,\qquad b=\sqrt{\frac{\lambda_1^2 + \lambda_2^2}2}\,,\qquad c=\lambda_3\,,\qquad \gamma=\frac{\lambda_1^2-\lambda_2^2}{\lambda_1^2+\lambda_2^2}\,.\label{eq:abcInlambda}
	\end{align}
	In particular, $F$ is necessarily of the form \eqref{eq:general_form} if the Cauchy stress tensor corresponding to the deformation gradient $F$ induced by an isotropic law of Cauchy elasticity is a pure shear stress tensor of the form $\sigma=s\.(e_1\otimes e_2+e_2\otimes e_1)$ with $s\in\R$.
\end{proposition}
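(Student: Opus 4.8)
The plan is to split the claim into an explicit construction of one admissible $F$ and a standard uniqueness argument, since $FF^T=B$ determines $F$ only up to a right rotation. First I would set $F_0\colonequals F_\gamma\.\diag(a,b,c)$ with $a,b,c,\gamma$ as in \eqref{eq:abcInpq} and simply verify $F_0F_0^T=B$. Because $F_0F_0^T=F_\gamma\.\diag(a^2,b^2,c^2)\,F_\gamma^T$, a direct computation gives
\[
	F_\gamma\.\diag(a^2,b^2,c^2)\,F_\gamma^T = \begin{pmatrix} a^2 + \gamma^2 b^2 & \gamma b^2 & 0 \\ \gamma b^2 & b^2 & 0 \\ 0 & 0 & c^2 \end{pmatrix},
\]
and matching this entry by entry with \eqref{form_von_P} forces $b^2=p$, $c^2=r$, $\gamma b^2=q$ and $a^2=p-\gamma^2b^2=\tfrac{p^2-q^2}{p}$, which is exactly \eqref{eq:abcInpq}. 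The hypotheses $p>\abs q$ and $r>0$ are precisely what is needed for $a=\sqrt{(p^2-q^2)/p}$, $b=\sqrt p$ and $c=\sqrt r$ to be real and strictly positive (note $p>\abs q\geq 0$ already yields $p>0$), so that $F_0\in\GLp(3)$ with $\det F_0=abc>0$.

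For uniqueness up to rotation I would use the elementary fact that $FF^T$ fixes $F$ modulo a right rotation: if $F\in\GLp(3)$ also satisfies $FF^T=B=F_0F_0^T$, then $(F_0^{-1}F)(F_0^{-1}F)^T=F_0^{-1}\.B\.F_0^{-T}=\id$, so $Q\colonequals F_0^{-1}F$ is orthogonal, and $\det F,\det F_0>0$ force $Q\in\SO(3)$. Hence $F=F_0\.Q=F_\gamma\.\diag(a,b,c)\,Q$, which is \eqref{eq:general_form}; conversely every such product solves $FF^T=B$ since $QQ^T=\id$. As $p,q,r$ are read off directly from the entries of $B$, the parameters $\gamma,a,b,c$ are completely fixed and only $Q$ remains free, giving the asserted uniqueness.

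The passage to singular values is then pure substitution of $p=\tfrac12(\lambda_1^2+\lambda_2^2)$, $q=\tfrac12(\lambda_1^2-\lambda_2^2)$, $r=\lambda_3^2$ from \eqref{eq:PinEigenvalues}; the only step worth recording is the factorization $p^2-q^2=(p-q)(p+q)=\lambda_2^2\lambda_1^2$, which turns $a=\sqrt{(p^2-q^2)/p}$ into $a=\lambda_1\lambda_2\sqrt{2/(\lambda_1^2+\lambda_2^2)}$, matching \eqref{eq:abcInlambda}. Finally, for the concluding \enquote{in particular} statement, I would argue that if $\widehat\sigma(B)$ is a pure shear stress then isotropy forces $B$ to commute with it, so Lemma \ref{lem_P_und_T_kommutieren} puts $B$ into the form \eqref{form_von_P}; positive definiteness of $B$ then yields $p-\abs q=\min(\mu_1,\mu_2)>0$ and $r=\mu_3>0$, so the hypotheses are automatically met and $F$ must take the form \eqref{eq:general_form}.

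I expect no deep obstacle: the substance is the explicit square-root computation together with the classical observation that $FF^T$ determines $F$ up to a rotation. The only places demanding care are checking that the stated hypotheses $p>\abs q$, $r>0$ are exactly the positivity conditions required for the three square roots to exist, and verifying that these conditions hold automatically in the Cauchy-stress application through the eigenvalue identities of Lemma \ref{lem_P_und_T_kommutieren}.
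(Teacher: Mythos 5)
Your proof is correct and follows essentially the same route as the paper: construct $\widetilde F = F_\gamma\,\diag(a,b,c)$ with the parameters \eqref{eq:abcInpq}, verify $\widetilde F\widetilde F^T=B$ entrywise, pass to singular values via the eigenvalue identities of Lemma \ref{lem_P_und_T_kommutieren}, and conclude by uniqueness of $F$ up to a right-hand rotation. You merely go slightly further in two harmless ways: you prove the uniqueness-up-to-rotation fact explicitly (via $Q=F_0^{-1}F$ orthogonal with positive determinant) where the paper cites it as standard, and you argue the concluding \enquote{in particular} clause directly, which the paper defers to the proof of Corollary \ref{corollary:destradeWir}.
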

\begin{proof}
	Let $\widetilde{F}=F_\gamma\.\diag(a,b,c)$ with $a,b,c$ given by \eqref{eq:abcInpq}. Then $a^2 + b^2\gamma^2 = b^2 = p$, $b^2\gamma = q$ and $c^2=r$, thus
	\begin{align*}
		\widetilde F\widetilde F^T &=  \begin{pmatrix} 1 & \gamma & 0 \\ 0 & 1 & 0 \\ 0 & 0 & 1 \end{pmatrix} \begin{pmatrix} a & 0 & 0 \\ 0 & b & 0 \\ 0 & 0 & c \end{pmatrix}\begin{pmatrix} a & 0 & 0 \\ 0 & b & 0 \\ 0 & 0 & c \end{pmatrix} \begin{pmatrix} 1 & 0 & 0 \\ \gamma & 1 & 0 \\ 0 & 0 & 1 \end{pmatrix}= \begin{pmatrix} a^2+b^2\gamma^2 & b^2\gamma & 0 \\ b^2\gamma & b^2 & 0 \\ 0 & 0 & c^2 \end{pmatrix} = \begin{pmatrix} p & q & 0 \\ q & p & 0 \\ 0 & 0 & r \end{pmatrix}\,.
	\end{align*}

	Due to \reflemma{lem_P_und_T_kommutieren}, $p=\frac 12(\lambda_1^2+\lambda_2^2)$, $q=\frac 12(\lambda_1^2-\lambda_2^2)$ and $r=\lambda_3^2$, which immediately implies $b=\sqrt{\frac{\lambda_1^2 + \lambda_2^2}2}$, $c=\lambda_3$ and $\gamma=\frac{\lambda_1^2-\lambda_2^2}{\lambda_1^2+\lambda_2^2}$. Moreover, $p+q=\lambda_1^2$ and $p-q=\lambda_2^2$, thus $p^2-q^2=\lambda_1^2\,\lambda_2^2$ and therefore $a=\lambda_1\lambda_2\sqrt{\frac 2{\lambda_1^2+\lambda_2^2}}$.
	
	Now let $B$ be given by \eqref{form_von_P} and consider an arbitrary $F\in\GL^+(3)$ with $FF^T=B=\widetilde F\widetilde F^T$. Since $F$ is uniquely determined by $FF^T$ up to a right-hand rotation, there exists $Q\in\SO(3)$ with $F=\widetilde F\.Q$, thus $F$ is of the form \eqref{eq:general_form}.
\end{proof}
The following generalization of Theorem \ref{theorem:destrade} summarizes the above considerations.
\begin{corollary}\label{corollary:destradeWir}
	Consider an isotropic elasticity law with a unique stress-free reference configuration, i.e.\ $\widehat\sigma(B) = 0$ if and only if $B = \id$. Then any deformation gradient $F$ which induces a Cauchy pure shear stress $\sigma=s\.(e_1\otimes e_2+e_2\otimes e_1)$ with $s\in\R$ is necessarily of the form \eqref{eq:general_form}.
\end{corollary}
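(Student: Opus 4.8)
The plan is to assemble the three preceding results and to split the argument according to whether the amount of shear stress $s$ vanishes. The single structural input I would rely on is that for any isotropic law of Cauchy elasticity the left Cauchy-Green tensor $B = FF^T$ is coaxial with the Cauchy stress $\widehat\sigma(B)$, so that the two commute; this needs no constitutive inequality and follows solely from isotropy, since each principal axis of Eulerian strain is then a principal axis of Cauchy stress.

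First I would treat the case $s \neq 0$. Here $\sigma = \widehat\sigma(B)$ is a nontrivial pure shear stress, and since $B$ commutes with it, Lemma~\ref{lem_P_und_T_kommutieren} forces $B$ into the form \eqref{form_von_P}. It then remains only to verify the hypotheses $p > \abs{q}$ and $r > 0$ of Proposition~\ref{prop_F_eindeutig_bestimmt_B}. Writing the eigenvalues of $B$ as $\lambda_1^2, \lambda_2^2, \lambda_3^2$, positive definiteness gives $\lambda_i > 0$; from $p = \frac12(\lambda_1^2+\lambda_2^2)$, $q = \frac12(\lambda_1^2-\lambda_2^2)$ and $r = \lambda_3^2$ one reads off $p - \abs{q} = \min(\lambda_1^2,\lambda_2^2) > 0$ and $r = \lambda_3^2 > 0$. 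Proposition~\ref{prop_F_eindeutig_bestimmt_B} would then deliver at once that $F$ is of the form \eqref{eq:general_form}.

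Next I would treat the degenerate case $s = 0$, in which $\widehat\sigma(B) = 0$. This split is essential, because Lemma~\ref{lem_P_und_T_kommutieren} is only available for $s \neq 0$ and commutativity of $B$ with a vanishing stress carries no information; it is precisely here that the sole constitutive hypothesis of the corollary --- uniqueness of the stress-free reference configuration --- is used. By assumption $\widehat\sigma(B) = 0$ forces $B = \id$, hence $FF^T = \id$ and $F \in \SO(3)$, which is trivially of the form \eqref{eq:general_form} with $\gamma = 0$, $a = b = c = 1$ and $Q = F$.

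I expect the only genuine difficulty to be conceptual rather than computational: one must recognize that the semi-invertibility assumption of Destrade et al.\ can be dropped entirely, because commutativity of $B$ and $\widehat\sigma(B)$ is automatic under isotropy, and that the one case where this commutativity is vacuous, namely $s = 0$, is exactly covered by the mild requirement of a unique stress-free state. Once these two observations are secured, the corollary reduces to a direct concatenation of Lemma~\ref{lem_P_und_T_kommutieren} and Proposition~\ref{prop_F_eindeutig_bestimmt_B}.
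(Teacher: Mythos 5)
Your proof is correct and follows the same route as the paper's: isotropy gives commutativity of $B=FF^T$ with $\sigmahat(B)$, Lemma \ref{lem_P_und_T_kommutieren} then pins $B$ to the form \eqref{form_von_P}, and Proposition \ref{prop_F_eindeutig_bestimmt_B} yields \eqref{eq:general_form}. In fact you are more careful than the paper's own three-line proof, which never splits off the degenerate case: since Lemma \ref{lem_P_und_T_kommutieren} is stated only for $s\neq0$, the case $s=0$ must be handled separately, and your observation that this is exactly where the hypothesis of a unique stress-free reference configuration enters (forcing $B=\id$, hence $F\in\SO(3)$, which is of the form \eqref{eq:general_form} with $\gamma=0$, $a=b=c=1$, $Q=F$) supplies a step the published argument leaves implicit --- indeed the paper's proof never invokes that hypothesis at all. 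Your explicit check that $p>\abs{q}$ and $r>0$ follow from the positive definiteness of $B$, so that Proposition \ref{prop_F_eindeutig_bestimmt_B} is actually applicable, is likewise tacit in the paper (it is only remarked upon later, in Remark \ref{rem_gamma}).
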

\begin{proof}
	Due to the assumed isotropy, the stretch $B=FF^T$ and the stress $\sigmahat(B)$ commute. According to Lemma \ref{lem_P_und_T_kommutieren}, $B$ must therefore be of the form \eqref{form_von_P}, thus $F$ must have the form \eqref{eq:general_form} due to Proposition \ref{prop_F_eindeutig_bestimmt_B}.
\end{proof}
\begin{remark}\label{rem_gamma}
In order for the term $\sqrt{\frac{p^2-q^2}p}$ to be well defined, the condition $p>|q|$ must hold. This implies the upper bound $|\gamma|=\frac{|q|}p < 1$, i.e.\ the shear angle is always limited by $45^\circ$. Note carefully that this limitation $p=\frac{1}{2}\.(\mu_1+\mu_2)>\frac{1}{2}\abs{\mu_1-\mu_2}=q$ is due to the positive definiteness of $B$ and not dependent on any constitutive requirements.
\end{remark}
\begin{remark}
\label{remark:differentProofs}
	While (unlike Moon and Truesdell \cite{moon1974interpretation}) Mihai and Goriely \cite{mihai2011positive} do not state the result of Corollary \ref{corollary:destradeWir} in full generality, their proof does not require the empirical inequalities, the Baker-Ericksen inequalities or the semi-invertibility of the Cauchy stress response. Both the proofs given in \cite{moon1974interpretation} and \cite{mihai2011positive} implicitly utilize the bi-coaxiality of the Cauchy stress tensor $\sigma$ and the stretch tensor $B$ and thus the distinctness of the eigenvalues, although they involve a more direct computation of the matrix entries of $B$ instead of the diagonalization approach presented here.
\end{remark}
The factor $b= \sqrt{B_{11}}$ in Proposition \ref{prop_F_eindeutig_bestimmt_B} plays an important role for the so-called \emph{Poynting effect} \cite{mihai2011positive,mihai2017characterize}, which describes the observed change in length normal to sheared faces of a cuboid and its effect on the axial length of a cylinder subjected to torsion \cite{poynting1909pressure}. The close connection of this effect to the material behaviour under shear stresses arises from considering infinitesimal cubes on the surface, visualized in Figure \ref{fig:simpleShearDeformation}. A positive Poynting effect, i.e.\ an increase in axial length corresponding to a value $b>1$, is is exhibited by most physical materials, although a contraction in axial direction (negative Poynting effect, $b<1$) has been observed for certain biopolymers \cite{mihai2011positive}.

Another effect connected to shear stresses is the so-called \emph{Kelvin effect} \cite[p.~176]{truesdell65} which describes a change in the volume, i.e.\ $\det F\neq 1$, of the material under pure shear stress. Note that this behaviour cannot be described by a hyperelastic material with an additive isochoric-volumetric split, i.e.\ an energy potential of the form $W(F)=\Wiso(F/(\det F)^\frac{1}{3})+f(\det F)$, since in this case \cite{richter1948} 
\begin{align}
	0=\tr(\sigma)=f'(\det V) \qquad\implies\qquad \det V = 1\label{eq:richter}
\end{align}
due to the usual requirement of a unique stress-free reference configuration.
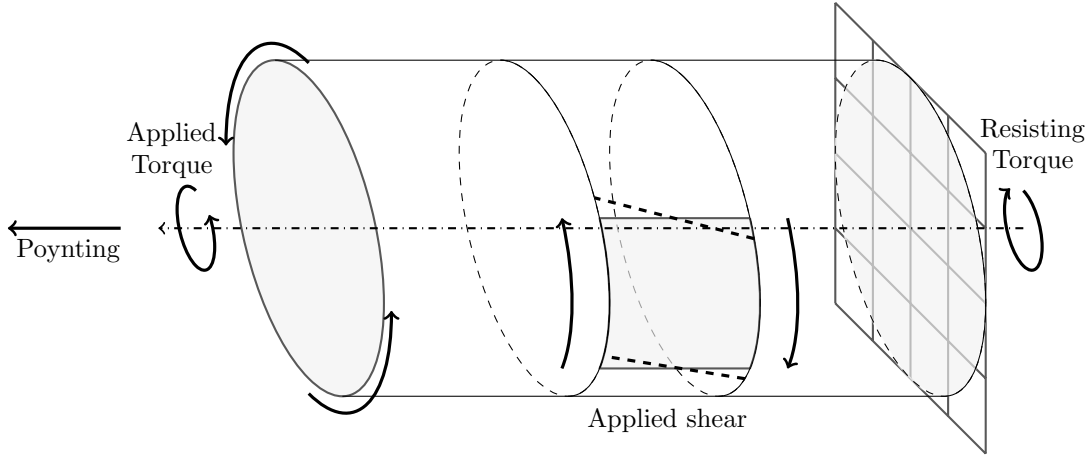
\begin{figure}[h!]
    \centering
    \tikzsetnextfilename{torsion}
    \begin{tikzpicture}
 		    	\def\r{2.0}
    	\def\w{30}
    	\def\d{7}
		\pgfmathsetmacro\a{atan(1/\r)}
		\pgfmathsetmacro\hz{\r*sin(\a)}
		\pgfmathsetmacro\hy{\r*cos(\a)}
		\pgfmathsetmacro\xy{\r*sin(\w)}
		\pgfmathsetmacro\xz{-\r*cos(\w)}
		\pgfmathsetmacro\xyp{\r*sin(\w+\d)}
		\pgfmathsetmacro\xzp{-\r*cos(\w+\d)}
		\pgfmathsetmacro\xym{\r*sin(\w-\d)}
		\pgfmathsetmacro\xzm{-\r*cos(\w-\d)}
		
    	\begin{scope}[x  = {(-1cm,0cm)},
                    y  = {(0cm,1cm)},
                    z  = {(-0.5cm,0.5cm)},
                    scale = 1]
			\definecolor{lightgray2}{rgb}{0.95,0.95,0.95}
			\tikzset{facestyle/.style={fill=lightgray2,draw=black,thick,line join=round,opacity=0.65}}
			\tikzset{facestyle2/.style={fill=lightgray2,draw=white,thick,line join=round,opacity=0.65}}
			\begin{scope}[canvas is yz plane at x=-1.5]
				\coordinate (a1) at (0,0);
				\draw[->,very thick] (\r/4,0) arc(0:-315:\r/4) node[above,xshift=0.3cm]{\begin{tabular}{c}Resisting \\ Torque \end{tabular}};
			\end{scope}		
			
			\begin{scope}[canvas is yz plane at x=0]
				\coordinate (a2) at (\hy,\hz);
				\coordinate (a3) at (-\hy,-\hz);
				\draw[facestyle] (-\r,-\r) grid (\r,\r);
				\draw[facestyle2] (0,0) circle (\r);
				\draw[dashed] (0,0) circle (\r);
				\draw (\hy,\hz) arc(\a:-180+\a:\r);
			\end{scope}
			
			\begin{scope}[canvas is yz plane at x=2.5]
				\draw[->,very thick] (\xy,\xz) arc(\w-90:-\w-90:\r);
			\end{scope}
			
			\begin{scope}[canvas is yz plane at x=3]
				\draw[dashed] (0,0) circle (\r);
				\draw (\hy,\hz) arc(\a:-180+\a:\r);
				\coordinate (x1) at (\xy,\xz);
				\coordinate (x2) at (-\xy,\xz);
				\coordinate (y1) at (\xym,\xzm);
				\coordinate (y2) at (-\xyp,\xzp);
				\path[facestyle] (x1) arc(\w-90:-\w-90:\r)--(-\xy-2,\xz+4) arc(-\w-90:\w-90:\r)--(x1);
			\end{scope}
			
			\begin{scope}[canvas is yz plane at x=5]
				\draw[dashed] (0,0) circle (\r);
				\draw (\hy,\hz) arc(\a:-180+\a:\r);
				\coordinate (x3) at (\xy,\xz);
				\coordinate (x4) at (-\xy,\xz);
				\coordinate (y3) at (\xyp,\xzp);
				\coordinate (y4) at (-\xym,\xzm);
			\end{scope}
			
			\begin{scope}[canvas is yz plane at x=5.5]
				\draw[->,very thick] (-\xy,\xz) arc(-\w-90:+\w-90:\r) node[below,yshift=-2.4cm,xshift=1.4cm] {Applied shear};
			\end{scope}
			
			\begin{scope}[canvas is yz plane at x=8]
				\coordinate (b1) at (0,0);
				\coordinate (b2) at (\hy,\hz);
				\coordinate (b3) at (-\hy,-\hz);
				\draw[facestyle] (b1) circle (\r);
				\draw[->,very thick] (\r*1.1,0) arc(0:90:\r*1.1);
				\draw[->,very thick] (-\r*1.1,0) arc(180:270:\r*1.1);
			\end{scope}
			
			\begin{scope}[canvas is yz plane at x=9.5]
				\coordinate (c1) at (0,0);
				\draw[->,very thick] (\r/4,0) arc(0:315:\r/4) node[above,yshift=0.3cm,xshift=-0.5cm]{\begin{tabular}{c}Applied \\ Torque \end{tabular}};
			\end{scope}
			
			\begin{scope}[canvas is yz plane at x=10]
				\coordinate (c1) at (0,0);
			\end{scope}
			
			\begin{scope}[canvas is yz plane at x=10.5]
				\coordinate (p1) at (0,0);
			\end{scope}
			
			\begin{scope}[canvas is yz plane at x=12]
				\coordinate (p2) at (0,0);
			\end{scope}
			
			\draw[->,dashdotted, thick] (a1)--(c1);
			\draw (a2)--(b2);
			\draw (a3)--(b3);
			\draw[->, very thick] (p1) -- (p2) node[xshift=0.8cm, below]{Poynting};
			\draw[very thick,dashed](y1)--(y3);
			\draw[very thick,dashed](y2)--(y4);

		\end{scope}
    \end{tikzpicture}
    \caption{Infinitesimal planes tangential to the cylinder are subject to shear, and the generalized \emph{positive} Poynting effect for these planes leads to an elongation in the direction of the torsional axis (positive Poynting effect).}	\label{fig:torsion}
\end{figure}
We summarize:\par
\fbox{\parbox{.98\textwidth}{
	\[
		\text{Cauchy pure shear stress}\quad\sigmahat(B)=\begin{pmatrix}
			0&s&0\\s&0&0\\0&0&0
		\end{pmatrix}\quad\longrightarrow\quad B=\begin{pmatrix}
			B_{11}&B_{12}&0\\B_{21}&B_{22}&0\\0&0&B_{33}
		\end{pmatrix}
	\]
	$\begin{array}[t]{llll}
		\text{Positive Poynting effect:}&B_{11}>1\,,\qquad& \text{Negative Poynting effect:}&B_{11}<1\,.\\
		\text{Kelvin effect:}&\det B\neq 1\,,\qquad& \text{\enquote{not-planar}:}&B_{33}\neq 1\,.
	\end{array}$
}}\par
%
%
%
\section{Idealized finite simple shear deformations}\label{section:finiteSimpleShear}
Of course, while any deformation gradient $F\in\GLp(3)$ corresponding to a Cauchy pure shear stress must be of the form \eqref{prop_F_eindeutig_bestimmt_B} regardless of the (isotropic) constitutive law of elasticity, the value of the parameters $a,b,c,\gamma$ or, equivalently, the principal stretches $\lambda_1,\lambda_2,\lambda_3$, depend on the specific stress-strain relation.

In particular, we believe that not every deformation of the general form \eqref{eq:general_form} is suitable to be called \enquote{shear}, motivating the following definition.\par
\fbox{\parbox{.98\textwidth}{
\begin{definition}\label{def:shearDeformation}
	We call $F=VR\in\GLp(3)$ with $V\in\PSym(3)$ and $R\in \SO(3)$ an (idealized) \emph{(finite) shear deformation} if the following requirements are satisfied:
	\begin{itemize}
		\item[i)] The stretch $V$ (or, equivalently, the deformation $F$) is \emph{volume preserving}, i.e.\ $\det V = 1$.
		\item[ii)] The stretch $V$ is \emph{planar}, i.e.\ $V$ has the eigenvalue $1$ to the eigenvector $e_3$.
		\item[iii)] The rotation $R$ is such that the deformation $F$ is \emph{ground parallel}, i.e.\ $e_1$, $e_3$ are eigenvectors of $F$.
	\end{itemize}
	In terms of the singular values $\lambda_1,\lambda_2,\lambda_3\in\R_+$ of $F$, i.e.\ the eigenvalues of $V$, the first two conditions can be stated as $\lambda_1\.\lambda_2\.\lambda_3=1$ and $\lambda_3=1$, respectively. In particular, i) and ii) are satisfied if and only if there exists $\lambda\in\R_+$ with $\lambda_1=\lambda$, $\lambda_2=\frac 1\lambda$ and $\lambda_3=1$.
\end{definition}}}
\par
\begin{remark}
	Definition \ref{def:shearDeformation} is a direct generalization of the infinitesimal behavior: the infinitesimal (classical) simple shear is planar, ground parallel and infinitesimally volume preserving, cf.\ Definition \ref{def:linearShear}. The concept is visualized in Figure \ref{fig:Tabular}. 
\end{remark}
As stated previously, the actual relation between the values of the amount of shear stress $s$ and the corresponding deformation tensor $B$ depends on the constitutive elasticity law. We want to give an example of a stress-strain relation for which Cauchy pure shear stresses induce deformations which do not satisfy Definition \eqref{def:shearDeformation}.
\begin{example}
	Consider the elastic energy potential $W$ and the corresponding Cauchy stress response $\sigmahat$ with
	\begin{equation}
		W(F)=\frac{1}{2}\norm{F}^2-\log(\det F)=\frac{1}{2}\left( I_1-\log I_3\right)\,,\qquad \sigmahat(B)=\frac{1}{\sqrt{\det B}}\left[B-\id\right]\,.
	\end{equation}
	If $\sigma=s\.(e_1\otimes e_2+e_2\otimes e_1)$ is a Cauchy pure shear stress, then the left Cauchy-Green deformation tensor $B$ is of the form \eqref{form_von_P} and thus
	\begin{align*}
		&&&\begin{pmatrix} 0&s&0\\s&0&0\\0&0&0	\end{pmatrix}=\frac{1}{\sqrt{I_3}}\begin{pmatrix}p-1&q&0\\q&p-1&0\\0&0&r-1\end{pmatrix}\qquad\iff\qquad r=1\,,\qquad p=1\quad \text{and}\quad s=\frac{q}{\sqrt{I_3}}\\
		&\implies& q&=s\.\sqrt{(p^2-q^2)\.r}\quad\overset{r=p=1}{\implies}\quad q^2=s^2\.(1-q^2)\quad\iff\quad\left(1+s^2\right)\.q^2=s^2\quad\implies\quad q=\frac{s}{\sqrt{1+s^2}}\,.
	\end{align*}
	Thus the deformation is planar ($r=1$) but not volume preserving for $s\neq0$, since
	\begin{equation*}
		\det B=(p^2-q^2)\.r=1-\frac{s^2}{1+s^2}=\frac{1}{1+s^2}<1\,.\null\tag*{\qed}
	\end{equation*}
\end{example}
We will demonstrate that the deformations of the general triaxial form \eqref{eq:general_form} which additionally satisfy the requirements of Definition \ref{def:shearDeformation} are exactly the left finite simple shear deformations introduced in Definition \ref{def:finiteSimpleShear}. Similarly, as shown in Appendix \ref{sec:Biot}, any shear deformation that corresponds to a Biot pure shear stress is a right finite simple shear deformation.
\begin{lemma}\label{lem_form_von_p_kommutieren_mit_t}
	Let $T$ be a pure shear stress tensor with $s\neq 0$ and $P\in\PSym(3)$ such that the eigenvalues $\lambda_1^2,\lambda_2^2,\lambda_3^2$ of $P$ are given by $\lambda_1=\lambda$, $\lambda_2=\frac 1\lambda$ and $\lambda_3=1$ for arbitrary $\lambda\in\R_+$.
	Then $P$ and $T$ commute if and only if $P$ is of the form
	\begin{equation}
		\sqrt P = \matr{\cosh(\alpha)&\sinh(\alpha)&0\\\sinh(\alpha)&\cosh(\alpha)&0\\0&0&1}=\exp\matr{0&\alpha&0\\\alpha&0&0\\0&0&0}\qquad\text{with $\alpha\in\R$}\label{form_von_P_shear}
	\end{equation}
	i.e.\ if and only if $\sqrt P$ is a finite pure shear stretch $V_\alpha$ of the form \eqref{eq:definitionPureShearStretch}.
\end{lemma}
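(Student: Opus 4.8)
The plan is to reduce the statement to Lemma \ref{lem_P_und_T_kommutieren} together with the explicit square-root formula \eqref{eq:squarerootGeneralForm}, so that the only substantive step is recognizing the hyperbolic reparametrization $\lambda=e^\alpha$.

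For the forward direction I would assume that $P$ and $T$ commute. Since $s\neq 0$, Lemma \ref{lem_P_und_T_kommutieren} immediately yields that $P$ is of the form \eqref{form_von_P}, and hence that $\sqrt P$ is given by \eqref{eq:squarerootGeneralForm} in terms of the singular values $\lambda_1,\lambda_2,\lambda_3$ of $F$ (equivalently, the eigenvalues of $\sqrt P$). Substituting the hypothesis $\lambda_1=\lambda$, $\lambda_2=\frac1\lambda$, $\lambda_3=1$ into \eqref{eq:squarerootGeneralForm} gives
\[
	\sqrt P = \frac12\begin{pmatrix} \lambda + \tfrac1\lambda & \lambda - \tfrac1\lambda & 0 \\ \lambda - \tfrac1\lambda & \lambda + \tfrac1\lambda & 0 \\ 0 & 0 & 2\end{pmatrix}.
\]
Setting $\alpha\colonequals\log\lambda$, so that $\lambda=e^\alpha$, and using $\cosh\alpha=\frac12(e^\alpha+e^{-\alpha})$ and $\sinh\alpha=\frac12(e^\alpha-e^{-\alpha})$, the diagonal and off-diagonal entries of the upper-left block become exactly $\cosh\alpha$ and $\sinh\alpha$, so $\sqrt P = V_\alpha$ is the finite pure shear stretch \eqref{eq:definitionPureShearStretch}. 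As $\lambda$ ranges over $\R_+$, the parameter $\alpha$ ranges over all of $\R$.

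For the converse I would simply observe that $V_\alpha$ is itself of the form \eqref{form_von_P} (with $p=\cosh\alpha$, $q=\sinh\alpha$, $r=1$), so the \enquote{conversely} computation in the proof of Lemma \ref{lem_P_und_T_kommutieren} shows $V_\alpha\.T=T\.V_\alpha$; consequently $P=V_\alpha^2$ also commutes with $T$, since $V_\alpha^2\.T=V_\alpha(V_\alpha\.T)=V_\alpha(T\.V_\alpha)=(V_\alpha\.T)V_\alpha=T\.V_\alpha^2$. I expect no genuine obstacle: the only non-automatic step is the identification $\lambda=e^\alpha$ turning the algebraic square-root expression into hyperbolic functions, together with the consistency check that $V_\alpha$ has precisely the prescribed eigenvalues $e^{\pm\alpha}$ and $1$ (so that the eigenvalue hypothesis and the conclusion match). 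Everything else is inherited from the two results already established.
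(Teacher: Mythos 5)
Your proposal is correct and takes essentially the same route as the paper's own proof: both reduce the statement to Lemma~\ref{lem_P_und_T_kommutieren} together with the square-root formula \eqref{eq:squarerootGeneralForm}, and then make the substitution $\lambda=e^\alpha$ to turn the algebraic entries into $\cosh(\alpha)$ and $\sinh(\alpha)$. The only (cosmetic) difference is that you spell out the converse explicitly, via $V_\alpha$ being of the form \eqref{form_von_P} and hence commuting with $T$, so that $P=V_\alpha^2$ commutes as well, whereas the paper absorbs this direction into the \enquote{if and only if} of Lemma~\ref{lem_P_und_T_kommutieren}.
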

\begin{proof}
	By \reflemma{lem_P_und_T_kommutieren}, the tensor $P$ commutes with $T$ if and only if it is of the form \eqref{eq:PinEigenvalues}. 
	Let $\alpha\colonequals\log(\lambda)\in\R$. Then, for $\lambda_1=\lambda$, $\lambda_2=\frac 1\lambda$ and $\lambda_3=1$, eq.\ \eqref{eq:squarerootGeneralForm} yields
	\begin{align*}
		\sqrt P &=  \frac 12\begin{pmatrix} \lambda_1 + \lambda_2 & \lambda_1 - \lambda_2 & 0 \\ \lambda_1 - \lambda_2 & \lambda_1 + \lambda_2 & 0 \\ 0 & 0 & 2\lambda_3 \end{pmatrix}= \frac 12\begin{pmatrix} e^\alpha + e^{-\alpha} & e^\alpha - e^{-\alpha} & 0 \\ e^\alpha - e^{-\alpha} & e^\alpha + e^{-\alpha} & 0 \\ 0 & 0 & 2 \end{pmatrix} = \begin{pmatrix} \cosh(\alpha) & \sinh(\alpha) & 0 \\ \sinh(\alpha) & \cosh(\alpha) & 0 \\ 0 & 0 & 1 \end{pmatrix}\,.\qedhere
	\end{align*}
\end{proof}
\begin{remark}
	In contrast to infinitesimal pure shear strain (see Definition \ref{def:linearShear} and Remark \ref{remark:linearStretchNotFinitelyVolumePreserving}), the finite pure shear stretch is finitely volume preserving as well (since $\det V_\alpha=1$). This fact is visualized for selected $\gamma$ and $\alpha$ in Figure \ref{fig_linear_finite_pureShearStretch}: The determinant of infinitesimal pure shear stretch is $\det(\id+\eps_\gamma)=1-\frac{\gamma^2}4$, whereas the determinant of finite pure shear stretch is $1$. Note that finite pure shear stretch linearizes to linear pure shear stretch, cf. Appendix \ref{appendix:LinearShear}.
	\begin{figure}[h!]
    	\tikzsetnextfilename{pureShearStretchLinearFinite}
    	\hspace{1em}
    	\begin{minipage}{0.12\textwidth}
    		\vspace{6em}
     		\[
       			\id+\underbrace{\matr{0 & \frac\gamma2 & 0 \\ \frac\gamma2 & 0 & 0 \\ 0 & 0 & 0}}_{\eps_\gamma},
      		\]
      		$\tr(\eps_\gamma)=0$
    	\end{minipage}
    	\hspace{-1cm}
    	\begin{minipage}{0.6\textwidth}
      		\begin{tikzpicture}
        		\input{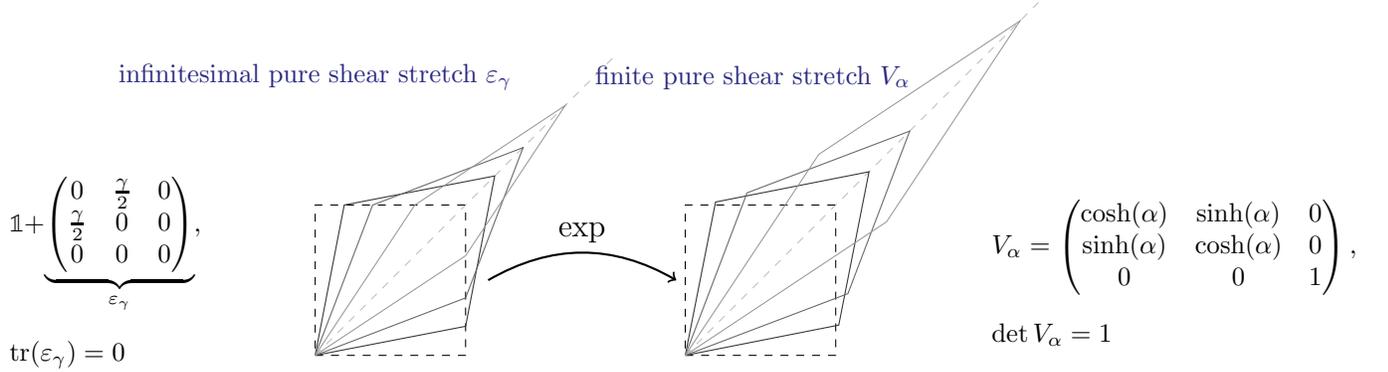}

\OmegaSetDefaults
\OmegaSetGridSize{.4}{.4}
\OmegaSetOutlineSampleCount{5}
\OmegaSetGridSampleCount{5}
\OmegaSetOutlineStyle{thick, color=black}
\OmegaSetShadingStyle{left color = lightgray, right color = black, opacity = .0}
\OmegaSetGridStyle{help lines}
\OmegaSetOutlineToRectangle{0}{0}{2}{2}

\OmegaSetDeformationToId
\OmegaSetSharpOutline

\definecolor{darkred}{rgb}{1,0.15,0.15}
\definecolor{darkgreen}{rgb}{0.15,0.5,0.15}
\definecolor{darkblue}{rgb}{0.15,0.15,0.5}

\definecolor{gray1}{rgb}{0.2,0.2,0.2}
\definecolor{gray2}{rgb}{0.4,0.4,0.4}
\definecolor{gray3}{rgb}{0.6,0.6,0.6}
\definecolor{gray4}{rgb}{0.7,0.7,0.7}

\def\gammer{2*sin(atan(tanh(\a))}

\begin{scope}[declare function={
        deformPx(\x,\y)=cosh(\a)*\x+sinh(\a)*\y;
        deformPy(\x,\y)=sinh(\a)*\x+cosh(\a)*\y;
		deformEpsx(\x,\y)=\x+\gammer/2*\y;
		deformEpsy(\x,\y)=\y+\gammer/2*\x;
 	}]

\begin{scope}[xshift=0, yshift=0]

\begin{scope}[xshift=0, yshift=0]
	\draw[darkblue]  (0,3.7) node {infinitesimal pure shear stretch $\eps_\gamma$};
	\draw[lightgray,dashed] (0,0) -- (4,4);
	\draw[dashed] (0,0) -- (2,0) -- (2,2) -- (0,2) -- (0,0);
	\def\a{0.2}	
	\draw[gray1] (0,0) -- ({deformEpsx(2,0)},{deformEpsy(2,0)}) -- ({deformEpsx(2,2)},{deformEpsy(2,2)}) --({deformEpsx(0,2)},{deformEpsy(0,2)}) -- (0,0);
	\def\a{0.44}	
	\draw[gray2] (0,0) -- ({deformEpsx(2,0)},{deformEpsy(2,0)}) -- ({deformEpsx(2,2)},{deformEpsy(2,2)}) --({deformEpsx(0,2)},{deformEpsy(0,2)}) -- (0,0);
	\def\a{1.4}	
	\draw[gray3] (0,0) -- ({deformEpsx(2,0)},{deformEpsy(2,0)}) -- ({deformEpsx(2,2)},{deformEpsy(2,2)}) --({deformEpsx(0,2)},{deformEpsy(0,2)}) -- (0,0);

\end{scope}

\begin{scope}[xshift=140, yshift=0]
	\draw[darkblue]  (0.9,3.7) node {finite pure shear stretch $V_\alpha$};
	\draw[lightgray,dashed] (0,0) -- (4.7,4.7);
	\draw[dashed] (0,0) -- (2,0) -- (2,2) -- (0,2) -- (0,0);
	\def\a{0.2}	
	\draw[gray1] (0,0) -- ({deformPx(2,0)},{deformPy(2,0)}) -- ({deformPx(2,2)},{deformPy(2,2)}) --({deformPx(0,2)},{deformPy(0,2)}) -- (0,0);
	\def\a{0.4}	
	\draw[gray2] (0,0) -- ({deformPx(2,0)},{deformPy(2,0)}) -- ({deformPx(2,2)},{deformPy(2,2)}) --({deformPx(0,2)},{deformPy(0,2)}) -- (0,0);
	\def\a{0.8}	
	\draw[gray3] (0,0) -- ({deformPx(2,0)},{deformPy(2,0)}) -- ({deformPx(2,2)},{deformPy(2,2)}) --({deformPx(0,2)},{deformPy(0,2)}) -- (0,0);

\end{scope}

\end{scope}

\draw[->,thick] (2.3,1) to[out=30,in=150] node[pos=0.5, above] {\large $\exp$} (4.8,1);
\end{scope}
      		\end{tikzpicture}
    	\end{minipage}
    	\hspace{1cm}
   	 	\begin{minipage}{0.15\textwidth}
    		\vspace{6em}
      		\[
        		V_\alpha=\matr{\cosh(\alpha) & \sinh(\alpha) & 0 \\ \sinh(\alpha) & \cosh(\alpha) & 0 \\ 0 & 0 & 1},
      		\]
      		$\det V_\alpha=1$
    	\end{minipage}
    	\caption{Linear and finite pure shear stretch with $\gamma=2\.\alpha$. Both are infinitesimally volume preserving, but only the finite pure shear stretch leaves the volume unchanged for finite stretch ratios. The transition mechanism is the matrix exponential.}\label{fig_linear_finite_pureShearStretch}
	\end{figure} 
\end{remark}
\begin{lemma}\label{lem_F_simple_finite_shear_B}
	Let $B=P$ be given by \eqref{form_von_P_shear}. Then any deformation gradient $F \in \GLp(3)$ with $FF^T=B$ is of the form
	\begin{equation}
		F = \begin{pmatrix} \frac 1{\sqrt{\cosh(2\alpha)}} & \frac{\sinh(2\alpha)}{\sqrt{\cosh(2\alpha)}} & 0 \\ 0 & \sqrt{\cosh(2\alpha)} & 0 \\ 0 & 0 & 1\end{pmatrix}\cdot Q = \frac 1{\sqrt{\cosh(2\alpha)}}\begin{pmatrix} 1 & \sinh(2\alpha) & 0 \\ 0 & \cosh(2\alpha) & 0 \\ 0 & 0 & \sqrt{\cosh(2\alpha)}\end{pmatrix}\cdot Q
	\end{equation}
	with arbitrary $Q\in\SO(3)$.
\end{lemma}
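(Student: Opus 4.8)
The plan is to read this lemma as the direct specialization of Proposition \ref{prop_F_eindeutig_bestimmt_B} to the case in which the squared singular values of $F$ are $\lambda_1^2=\lambda^2$, $\lambda_2^2=\lambda^{-2}$, $\lambda_3^2=1$ with $\lambda=e^\alpha$. First I would make the hypothesis of that proposition explicit by squaring the given root. Since $\sqrt P=V_\alpha$ is the finite pure shear stretch \eqref{eq:definitionPureShearStretch}, multiplying out the upper-left $2\times2$ block together with the identities $\cosh^2(\alpha)+\sinh^2(\alpha)=\cosh(2\alpha)$ and $2\.\sinh(\alpha)\.\cosh(\alpha)=\sinh(2\alpha)$ gives
\[
	B=P=(\sqrt P)^2=\begin{pmatrix}\cosh(2\alpha)&\sinh(2\alpha)&0\\\sinh(2\alpha)&\cosh(2\alpha)&0\\0&0&1\end{pmatrix}\,,
\]
so that in the notation of \eqref{form_von_P} we have $p=\cosh(2\alpha)$, $q=\sinh(2\alpha)$ and $r=1$.

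Next I would verify that the two standing assumptions of Proposition \ref{prop_F_eindeutig_bestimmt_B}, namely $p>\abs q$ and $r>0$, hold automatically here: $r=1>0$ is immediate, and $\cosh(2\alpha)>\abs{\sinh(2\alpha)}$ for every real $\alpha$ because $\cosh^2(2\alpha)-\sinh^2(2\alpha)=1>0$. With the hypotheses in force, the proposition applies verbatim and already delivers both the representation $F=F_\gamma\.\diag(a,b,c)\,Q$ and its uniqueness up to the right factor $Q\in\SO(3)$; no separate uniqueness argument is needed, since $F$ is determined by $FF^T$ up to a right-hand rotation.

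It then remains only to evaluate the closed forms \eqref{eq:abcInpq} at these values. Substituting $p=\cosh(2\alpha)$, $q=\sinh(2\alpha)$, $r=1$ and using $\cosh^2-\sinh^2=1$ once more yields $a=\sqrt{(p^2-q^2)/p}=1/\sqrt{\cosh(2\alpha)}$, $b=\sqrt p=\sqrt{\cosh(2\alpha)}$, $c=\sqrt r=1$ and $\gamma=q/p=\tanh(2\alpha)$ (equivalently, one reaches the same values through \eqref{eq:abcInlambda} with $\lambda_1=e^\alpha$, $\lambda_2=e^{-\alpha}$, $\lambda_3=1$). Carrying out the product $F_\gamma\.\diag(a,b,c)$ and simplifying the single off-diagonal entry $b\.\gamma=\sqrt{\cosh(2\alpha)}\.\tanh(2\alpha)=\sinh(2\alpha)/\sqrt{\cosh(2\alpha)}$ reproduces the first matrix in the statement; pulling the common factor $1/\sqrt{\cosh(2\alpha)}$ out of all entries gives the equivalent second form, and appending the arbitrary $Q\in\SO(3)$ completes the argument.

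I do not expect a genuine obstacle: the entire proof is a substitution into an already-established proposition followed by hyperbolic bookkeeping. The only conceptual point worth stating carefully is that the positive-definiteness constraints $p>\abs q$ and $r>0$ are satisfied here \emph{identically} in $\alpha$ rather than requiring any new assumption, which is precisely what makes the finite pure shear stretch $V_\alpha$ an admissible stretch for \emph{every} real $\alpha$.
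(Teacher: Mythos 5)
Your proposal is correct and follows essentially the same route as the paper: both specialize Proposition \ref{prop_F_eindeutig_bestimmt_B} to $\lambda_1=e^\alpha$, $\lambda_2=e^{-\alpha}$, $\lambda_3=1$ and then carry out the hyperbolic simplifications, the only (immaterial) difference being that you evaluate the parameters via \eqref{eq:abcInpq} with $p=\cosh(2\alpha)$, $q=\sinh(2\alpha)$, $r=1$, while the paper substitutes directly into the singular-value formulas \eqref{eq:abcInlambda}. Your explicit check that $p>\abs{q}$ and $r>0$ hold identically in $\alpha$ is a small but welcome addition of rigor that the paper leaves tacit.
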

\begin{proof}
	We use \refproposition{prop_F_eindeutig_bestimmt_B} and write $\lambda_1=e^\alpha$, $\lambda_2=e^{-\alpha}$, $\lambda_3=1$. Then
	\begin{align*}
		\gamma &=  \frac{\lambda_1^2-\lambda_2^2}{\lambda_1^2+\lambda_2^2}  = \frac{e^{2\alpha}-e^{-2\alpha}}{e^{2\alpha}+e^{-2\alpha}} = \tanh(2\alpha)\,,\qquad b = \sqrt{\frac{\lambda_1^2+\lambda_2^2}2}  = \sqrt{\frac{e^{2\alpha} + e^{-2\alpha}}2} = \sqrt{\cosh(2\alpha)}\,,\\
		 a &=  \lambda_1\lambda_2\sqrt{\frac 2{\lambda_1^2+\lambda_2^2}}  = e^{\alpha}e^{-\alpha}\cdot\sqrt{\frac 2{e^{2\alpha} + e^{-2\alpha}}} = \frac 1{\sqrt{\cosh(2\alpha)}}\qquad\text{and}\qquad	c = \lambda_3  = 1
	\end{align*}
	and thus
	\begin{align*}
		F &=  \begin{pmatrix} 1 & \gamma & 0 \\ 0 & 1 & 0 \\ 0 & 0 & 1 \end{pmatrix} \begin{pmatrix} a & 0 & 0 \\ 0 & b & 0 \\ 0 & 0 & c \end{pmatrix}Q= \begin{pmatrix} 1 & \tanh(2\alpha) & 0 \\ 0 & 1 & 0 \\ 0 & 0 & 1 \end{pmatrix} \begin{pmatrix} \frac 1{\sqrt{\cosh(2\alpha)}} & 0 & 0 \\ 0 & \sqrt{\cosh(2\alpha)} & 0 \\ 0 & 0 & 1 \end{pmatrix}Q\\
		&= \frac 1{\sqrt{\cosh(2\alpha)}}\begin{pmatrix} 1 & \sinh(2\alpha) & 0 \\ 0 & \cosh(2\alpha) & 0 \\ 0 & 0 & \sqrt{\cosh(2\alpha)} \end{pmatrix}Q\,.\qedhere
	\end{align*}
\end{proof}
\begin{figure}[h!]
    \centering
    \tikzsetnextfilename{PolarDecompositionFiniteSimpleShear1}
    \begin{tikzpicture}
        \input{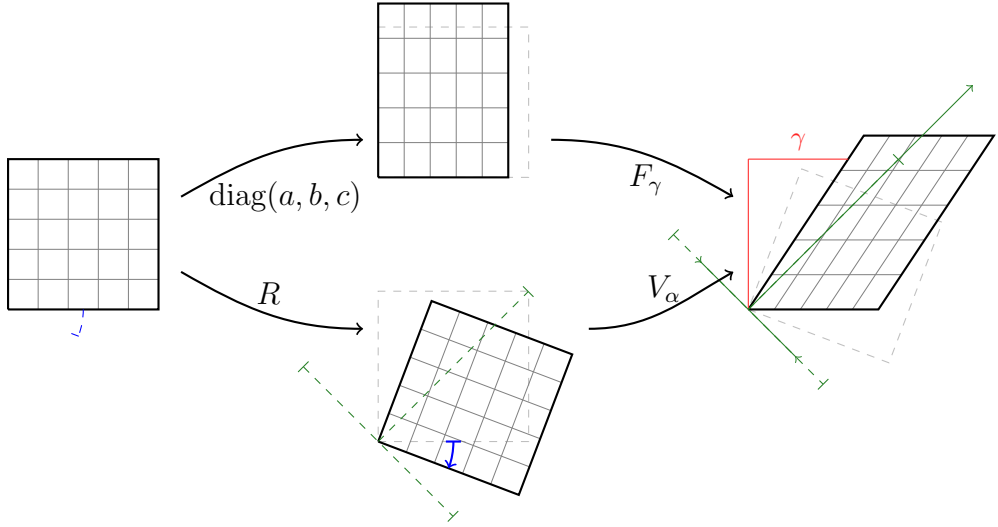}
    \end{tikzpicture}
    \caption{Left finite simple shear deformation with amount of shear $\gamma=\tanh(2\alpha)$, cf.\ \cite{lubensky2005}.}\label{fig:polarDexompositionLeft}
\end{figure} 
In particular, the triaxial stretch $\diag(a,b,c)$ has to be a (diagonal) biaxial pure shear stretch of the form $\diag(a,\frac{1}{a},1)$ if the deformation is a finite shear in the sense of Definition \ref{def:linearShear}. The exact form the deformation can take in this case is given by the following theorem.
\begin{theorem}\label{theorem:main}
	Any \emph{idealized shear deformation}\footnote{Again, that is: $\det F = 1$, $e_3$ is an eigenvector of $F$ for eigenvalue $1$ and $e_1$ is an eigenvector of $F$.\label{footnote}} $F\in\GLp(3)$ satisfying Definition \ref{def:shearDeformation} that corresponds to a (non-trivial) Cauchy pure shear stress for an isotropic law of elasticity is a \emph{left finite simple shear deformation} of the form
	\begin{equation}\label{eq:leftSimpleFiniteShearTheorem}
		F_\alpha = \frac 1{\sqrt{\cosh(2\alpha)}}\matr{ 1 & \sinh(2\alpha) & 0 \\ 0 & \cosh(2\alpha) & 0 \\ 0 & 0 & \sqrt{\cosh(2\alpha)} }\qquad\text{with $\alpha\in\R$}\,.
	\end{equation}
\end{theorem}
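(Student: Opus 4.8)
The plan is to chain the structural results already proved and then invoke the ground-parallel requirement iii) of Definition \ref{def:shearDeformation} to remove the remaining rotational freedom. First I would note that, since $F$ corresponds to a Cauchy pure shear stress $\sigma = T$ for an isotropic law, the left Cauchy-Green tensor $B = FF^T$ commutes with $\widehat\sigma(B) = T$, as is the case for any isotropic stress response. The stress being non-trivial means $s \neq 0$, so Lemma \ref{lem_P_und_T_kommutieren} applies and forces $B$ into the form \eqref{form_von_P}. The volume-preserving and planar requirements i) and ii) then fix the singular values of $F$, equivalently the eigenvalues of the stretch $\sqrt B$, to $\lambda_1 = \lambda$, $\lambda_2 = \frac 1\lambda$, $\lambda_3 = 1$ for some $\lambda \in \Rp$.

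With this normalization, Lemma \ref{lem_form_von_p_kommutieren_mit_t} immediately gives $\sqrt B = V_\alpha$, a finite pure shear stretch with $\alpha = \log\lambda$. This is exactly the hypothesis of Lemma \ref{lem_F_simple_finite_shear_B}, which shows that every $F \in \GLp(3)$ with $FF^T = B$ equals $F_\alpha\,Q$ for some $Q \in \SO(3)$, where $F_\alpha$ is precisely the matrix \eqref{eq:leftSimpleFiniteShearTheorem}. Up to here only isotropy and the eigenvalue normalization have been used, and the statement reduces to showing that condition iii) singles out the representative $Q = \id$.

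For this last step I would pass to the polar decomposition $F = V_\alpha\,R$ with $R \in \SO(3)$. Since $V_\alpha e_3 = e_3$, the requirement $F e_3 = e_3$ forces $R e_3 = e_3$, so $R = R_\theta$ is a rotation about the $e_3$-axis. Demanding in addition that $e_1$ be an eigenvector of $F = V_\alpha R_\theta$ makes the $(2,1)$-entry vanish, i.e. $\sinh(\alpha)\cos\theta + \cosh(\alpha)\sin\theta = 0$, so that $\tan\theta = -\tanh(\alpha)$. Substituting $\cos\theta = \frac{\cosh\alpha}{\sqrt{\cosh(2\alpha)}}$, $\sin\theta = -\frac{\sinh\alpha}{\sqrt{\cosh(2\alpha)}}$ back into $V_\alpha R_\theta$ and simplifying with $\cosh^2\alpha - \sinh^2\alpha = 1$ and $2\sinh\alpha\cosh\alpha = \sinh(2\alpha)$ reproduces exactly \eqref{eq:leftSimpleFiniteShearTheorem}. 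Alternatively one posits directly that $e_1, e_3$ are eigenvectors of $F$ (so its first and third columns lie along $e_1$ and $e_3$) and matches the six entries of the symmetric tensor $FF^T$ against \eqref{form_von_P_shear}, which pins down all entries.

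The main obstacle is exactly this orientation bookkeeping: the equation $\tan\theta = -\tanh(\alpha)$ admits the second solution $\theta + \pi$, corresponding to composition with the rotation $\diag(-1,-1,1)$ about $e_3$, which yields another ground-parallel deformation with the same $FF^T = B$ but with the $e_1$-axis reversed. I would rule this out by observing that a genuine shear must send the ground direction $e_1$ to a positive multiple of itself; together with $\det F = 1$ this forces $\cos\theta > 0$ and hence the unique representative \eqref{eq:leftSimpleFiniteShearTheorem}. Everything else is a direct application of the preceding lemmas and standard hyperbolic identities.
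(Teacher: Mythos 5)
Your proposal is correct and follows the same chain of lemmas as the paper's own proof: isotropy gives commutativity of $B=FF^T$ with $\widehat\sigma(B)$; conditions i) and ii) of Definition \ref{def:shearDeformation} normalize the singular values to $\lambda,\frac1\lambda,1$; Lemma \ref{lem_form_von_p_kommutieren_mit_t} then forces $\sqrt{B}=V_\alpha$; and Lemma \ref{lem_F_simple_finite_shear_B} reduces the claim to fixing the right-hand rotation $Q$. The difference lies in the final step. The paper disposes of it in one clause, asserting that \enquote{exactly for $Q=\id$} the resulting deformation is ground parallel, without verification. You carry the verification out via the polar decomposition $F=V_\alpha R_\theta$, and in doing so you uncover something that assertion misses: the ground-parallelism equation $\tan\theta=-\tanh(\alpha)$ has a second solution $\theta+\pi$, i.e.\ $Q=\diag(-1,-1,1)$, and the corresponding deformation $F_\alpha\,\diag(-1,-1,1)$ satisfies every condition literally stated in the theorem's footnote ($\det F=1$, $Fe_3=e_3$, $e_1$ an eigenvector of $F$) while producing the same $B$ and hence the same Cauchy pure shear stress --- yet its $(1,1)$ entry is negative, so it is not of the form \eqref{eq:leftSimpleFiniteShearTheorem}. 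Thus the paper's \enquote{exactly} is, strictly speaking, inaccurate, and the theorem holds only modulo this orientation ambiguity. Your patch --- requiring that $F$ send $e_1$ to a \emph{positive} multiple of itself --- repairs this, but be aware that it is an additional convention contained neither in Definition \ref{def:shearDeformation} nor in the footnote; one should either build that positivity into the notion of ground parallelism or read the theorem's conclusion as holding up to composition with the half-turn $\diag(-1,-1,1)$. With that caveat made explicit, your argument is complete, and on this last point it is more careful than the paper's.
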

\begin{proof}
	Let $\sigmahat$ be a pure shear stress tensor with $s\neq 0$ and $B\in\PSym(3)$ with $\lambda_1=\lambda$, $\lambda_2=\frac 1\lambda$, $\lambda_3=1$ for arbitrary $\lambda\in\R_+$ with $\lambda_1^2,$ $\lambda_2^2,$ $\lambda_3^2$ as the eigenvalues of $B$. By \reflemma{lem_form_von_p_kommutieren_mit_t}, the tensors $B$ and $\sigmahat(B)$ commute if and only if $B=V^2$ is of the form \eqref{form_von_P_shear}, i.e.\ if and only if
	\begin{align}
		V = V_\alpha = \matr{\cosh(\alpha)&\sinh(\alpha)&0\\\sinh(\alpha)&\cosh(\alpha)&0\\0&0&1}=\exp\matr{0&\alpha&0\\\alpha&0&0\\0&0&0}\qquad\text{with $\alpha\in\R$}
	\end{align}
	is a finite pure shear stretch.

	For an isotropic elasticity law the left Cauchy-Green deformation tensor $B=FF^T$ commutes with $\widehat\sigma(B)$.
	Thus if $\widehat\sigma(B)$ is a Cauchy pure shear stress, then $B=FF^T$ is of the form \eqref{form_von_P_shear} (i.e.\ $V=V_\alpha$ is a finite pure shear stress). By \reflemma{lem_F_simple_finite_shear_B}, the deformation gradient $F$ is determined up to an arbitrary right-hand side rotation $Q\in\SO(3)$, and it remains to observe that exactly for $Q=\id$, the resulting deformation $F_\alpha$ satisfies the condition of ground parallelism (i.e.\ has eigenvectors $e_1$ and $e_3$).
\end{proof}
Observe that $b=\sqrt{\cosh(2\alpha)}=\sqrt{B_{11}}\geq 1$, hence a finite simple shear deformation $F_\alpha$ always results in a positive Poynting effect, which is considered a physically plausible behavior for most materials. 

In Appendix \ref{sec:Biot} we prove an analogous result for Biot pure shear stress and right finite shear deformations. The following connection between the left and right finite simple shear deformation is visualized in Figure \ref{fig:polarDexompositionLeft}.
\begin{lemma}\label{lemma:polarFiniteShear}
	Let $\alpha\in\R$ and
	\begin{equation}
	\label{eq:polarFiniteShearFactors}
		V_\alpha = \begin{pmatrix} \cosh(\alpha) & \sinh(\alpha) & 0 \\ \sinh(\alpha) & \cosh(\alpha) & 0 \\ 0 & 0 & 1 \end{pmatrix}
		\,,\qquad
		R = \frac 1{\sqrt{\cosh(2\alpha)}}\begin{pmatrix} \cosh(\alpha) & \sinh(\alpha) & 0 \\ -\sinh(\alpha) & \cosh(\alpha) & 0 \\ 0 & 0 & \sqrt{\cosh(2\alpha)} \end{pmatrix}\,.
	\end{equation}
	Then
	\begin{equation}
		V_\alpha\.R = \frac 1{\sqrt{\cosh(2\alpha)}}\begin{pmatrix} 1 & \sinh(2\alpha) & 0 \\ 0 & \cosh(2\alpha) & 0 \\ 0 & 0 & \sqrt{\cosh(2\alpha)}\end{pmatrix}\,,\qquad R\.V_\alpha = \frac 1{\sqrt{\cosh(2\alpha)}}\begin{pmatrix} \cosh(2\alpha) & \sinh(2\alpha) & 0 \\ 0 & 1 & 0 \\ 0 & 0 & \sqrt{\cosh(2\alpha)}\end{pmatrix}
		\end{equation}
	are a \emph{left finite simple shear deformation} and a \emph{right finite simple shear deformation}, respectively.
\end{lemma}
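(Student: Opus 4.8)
The plan is to verify the two claimed identities by direct matrix multiplication. Both assertions reduce to computing the product of the upper-left $2\times2$ blocks of $V_\alpha$ and $R$, since the third row and column decouple and contribute only the factor $\sqrt{\cosh(2\alpha)}$ in the $(3,3)$ entry once the scalar prefactor of $R$ is taken into account. The whole computation rests on the elementary hyperbolic identities $\cosh^2\alpha-\sinh^2\alpha=1$, $\cosh^2\alpha+\sinh^2\alpha=\cosh(2\alpha)$, and $2\sinh\alpha\cosh\alpha=\sinh(2\alpha)$.

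First I would confirm that $R\in\SO(3)$, so that the lemma genuinely presents each finite simple shear as a stretch composed with a rotation. This amounts to checking that the $2\times2$ rotation block of $R$ is orthogonal with determinant one: its determinant is $\frac{1}{\cosh(2\alpha)}(\cosh^2\alpha+\sinh^2\alpha)=1$, and the same identity shows that both of its columns have unit norm while their inner product vanishes. Since $V_\alpha$ is symmetric with eigenvalues $e^{\pm\alpha}$ and $1$, and hence lies in $\PSym(3)$, the products $V_\alpha R$ and $R V_\alpha$ are then bona fide polar decompositions.

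Next I would evaluate $V_\alpha R$. Multiplying the $2\times2$ blocks produces the diagonal entries $\cosh^2\alpha-\sinh^2\alpha=1$ and $\sinh^2\alpha+\cosh^2\alpha=\cosh(2\alpha)$, the upper-right entry $2\sinh\alpha\cosh\alpha=\sinh(2\alpha)$, and a vanishing lower-left entry; after restoring the prefactor $1/\sqrt{\cosh(2\alpha)}$ this is precisely the left finite simple shear deformation \eqref{eq:definitionLeftSimpleFiniteShear}. Reversing the order to compute $RV_\alpha$, the two diagonal entries exchange roles---the $(1,1)$ entry becomes $\cosh(2\alpha)$ and the $(2,2)$ entry becomes $1$---while the off-diagonal entries remain $\sinh(2\alpha)$ and $0$, giving the right finite simple shear deformation \eqref{eq:definitionRightSimpleFiniteShear}.

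There is no real obstacle here; the statement is a routine verification. The only point deserving a moment's care is the orthogonality check for $R$, which is what licenses the interpretation---announced in the text following Definition \ref{def:finiteSimpleShear}---that the left and right finite simple shears are built from the identical stretch $V_\alpha$ and rotation $R$, differing solely in the order of composition.
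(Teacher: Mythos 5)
Your proposal is correct and follows essentially the same route as the paper, which also proves the lemma by direct matrix multiplication using the identities $\cosh^2(\alpha)+\sinh^2(\alpha)=\cosh(2\alpha)$ and $2\sinh(\alpha)\cosh(\alpha)=\sinh(2\alpha)$. Your additional explicit check that $R\in\SO(3)$ is a sensible (if minor) supplement that the paper leaves implicit.
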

\begin{proof}
	The proof follows from direct computation, using the general equalities
	\[
		\cosh(\alpha)^2 + \sinh(\alpha)^2 = \cosh(2\alpha)\qquad\text{and}\qquad 2\.\sinh(\alpha)\.\cosh(\alpha) = \sinh(2\alpha)\,.\qedhere
	\]
\end{proof}
\begin{remark}
	Lemma \ref{lemma:polarFiniteShear} states that for a given $ \alpha \in \R $, the left and the right finite simple shear deformation are generated by exactly the same stretch $V_\alpha$ and rotation $R$, the only difference being the order of application of the two linear operations.
\end{remark}
\begin{figure}[h!]
    \centering
    \tikzRemake
    \tikzsetnextfilename{shearTabular}
    
\fbox{\parbox{1.00\textwidth}{
    \scalebox{0.825}{
    \begin{tikzpicture}
\begin{scope}

	\definecolor{darkred}{rgb}{1,0.15,0.15}
	\definecolor{darkgreen}{rgb}{0.15,0.5,0.15}
	\definecolor{darkblue}{rgb}{0.15,0.15,0.5}

	\tikzset{facestyle/.style={fill=lightgray,draw=black,thick,line join=round}}

	\begin{scope}[xshift=0, yshift=0]
  
		\draw[darkblue]  (0,1.5) node {\begin{tabular}{c}infinitesimal \\ pure shear stress\end{tabular}};
		\draw  (0,0) node {$\matr{0 & s & 0 \\[.5em] s & 0 & 0 \\[.5em] 0 & 0 & 0}$};
		\draw  (0,-1.2) node {$\sigma$};
		\draw[darkblue]  (9,1.5) node {infinitesimal pure shear strain};
		\draw  (9,0) node {$\matr{ 0 & \frac \gamma 2 & 0 \\[.5em] \frac \gamma 2 & 0 & 0 \\[.5em] 0 & 0 & 0}$};
		\draw  (9,-1.2) node {$\eps_\gamma$};		
		\draw[darkblue]  (15,1.5) node {linear simple shear deformation};
		\draw  (15,0) node {$\matr{1 & \gamma & 0 \\[.5em] 0 & 1 & 0 \\[.5em] 0 & 0 & 1} $};
		\draw  (15,-1.2) node {$F_\gamma$};

		\draw[thick,->] (1.2,0) -- (7.7,0);
		\draw[thick,->] (10.3,0) -- (13.7,0);
	\end{scope}

	\begin{scope}[xshift=0, yshift=-105]
  
		\draw[darkblue]  (0,1.5) node {pure shear stress};
		\draw  (0,0) node {$\matr{0 & s & 0 \\[.5em] s & 0 & 0 \\[.5em] 0 & 0 & 0}$};
		\draw  (0,-1.2) node {$\sigma$};
		\draw  (4,0) node {$\matr{ \frac 1{\lambda_1 + \lambda_2} & \frac 1{\lambda_1 - \lambda_2} & 0 \\[.5em] \frac 1{\lambda_1 - \lambda_2} & \frac 1{\lambda_1 + \lambda_2} & 0 \\[.5em] 0 & 0 & \lambda_3}$};
		\draw  (4,-1.2) node {$V$};
		\draw[darkblue]  (9,1.5) node {finite pure shear stretch};
		\draw  (9,0) node {$\matr{\cosh(\alpha) & \sinh(\alpha) & 0 \\[.5em] \sinh(\alpha) & \cosh(\alpha) & 0 \\[.5em] 0 & 0 & 1}$};
		\draw  (9,-1.2) node {$V_\alpha$};
		\draw[darkblue]  (15,1.5) node {left finite simple shear deformation};
		\draw  (15,0) node {$\begin{pmatrix} \frac 1{\sqrt{\cosh(2\alpha)}} & \frac{\sinh(2\alpha)}{\sqrt{\cosh(2\alpha)}} & 0 \\[.5em] 0 & \sqrt{\cosh(2\alpha)} & 0 \\[.5em] 0 & 0 & 1 \end{pmatrix}$};
		\draw  (15,-1.2) node {$F_\alpha$};
	
		\draw[thick,->] (1.2,0) -- (2.0,0);
		\draw[darkgreen,thick,->] (5.9,0) -- (6.9,0);
		\draw[darkgreen,thick,->] (11.1,0) -- (12.4,0);

	\end{scope}

		\fill [ fill = darkgreen ,  opacity = .07] (5.7 ,-3.4) rectangle (7.1 ,-7.65) ;
		\draw[darkgreen]  (6.4,-5.2) node {$\begin{matrix}\det V = 1\,,\\\text{planar}\end{matrix}$};

	\begin{scope}[xshift=0, yshift=-210]
  
		\draw[darkblue]  (0,1.5) node {pure shear stress};
		\draw  (0,0) node {$\matr{0 & s & 0 \\[.5em] s & 0 & 0 \\[.5em] 0 & 0 & 0}$};
		\draw  (0,-1.2) node {$\TBiot, S_2$};
		\draw  (4,0) node {$\matr{ \frac 1{\lambda_1 + \lambda_2} & \frac 1{\lambda_1 - \lambda_2} & 0 \\[.5em] \frac 1{\lambda_1 - \lambda_2} & \frac 1{\lambda_1 + \lambda_2} & 0 \\[.5em] 0 & 0 & \lambda_3}$};
		\draw  (4,-1.2) node {$U$};
		\draw[darkblue]  (9,1.5) node {finite pure shear stretch};
		\draw  (9,0) node {$\matr{\cosh(\alpha) & \sinh(\alpha) & 0 \\[.5em] \sinh(\alpha) & \cosh(\alpha) & 0 \\[.5em] 0 & 0 & 1}$};
		\draw  (9,-1.2) node {$U_\alpha$};
		\draw[darkblue]  (15,1.5) node {right finite simple shear deformation};
		\draw  (15,0) node {$\begin{pmatrix} \sqrt{\cosh(2\alpha)} & \frac{\sinh(2\alpha)}{\sqrt{\cosh(2\alpha)}} & 0 \\[.5em] 0 & \frac 1{\sqrt{\cosh(2\alpha)}} & 0 \\[.5em] 0 & 0 & 1 \end{pmatrix}$};
		\draw  (15,-1.2) node {$F_\alpha$};
		
		\draw[thick,->] (1.2,0) -- (2.0,0);
		\draw[darkgreen,thick,->] (5.9,0) -- (6.9,0);
		\draw[darkgreen,thick,->] (11.1,0) -- (12.4,0);

	\end{scope}
	
		\fill [ fill = darkgreen ,  opacity = .07] (10.9 ,-3.4) rectangle (12.6 ,-7.65) ;
		\draw[darkgreen]  (11.7,-5.2) node {$\begin{matrix}\text{ground-}\\\text{parallel}\end{matrix}$};

\end{scope}
    \end{tikzpicture}}}}
    \caption{Comparison of pure shear stress and finite simple shear.}\label{fig:Tabular}
\end{figure}
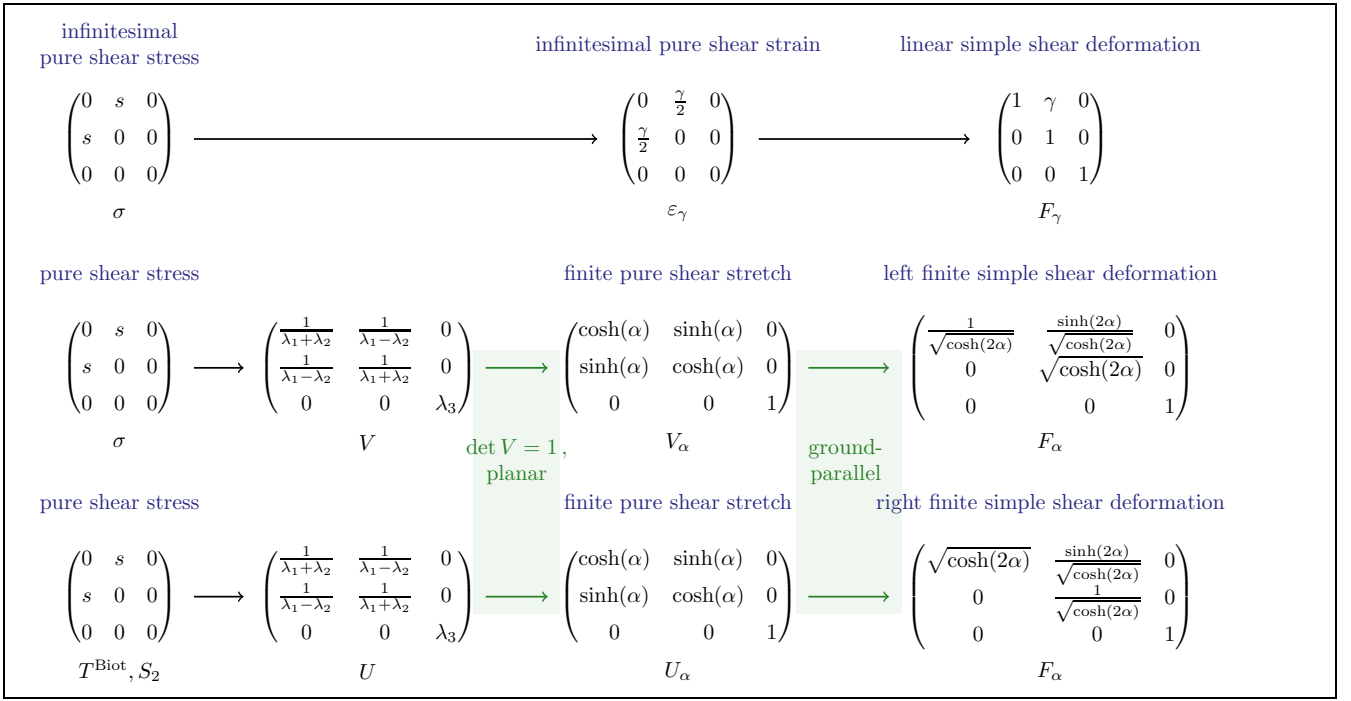 
%
%
%
%
\section{Constitutive conditions for idealized shear response in Cauchy pure shear stress}\label{sec:Constitutive}
As stated in Corollary \ref{corollary:destradeWir}, a Cauchy pure shear stress corresponds with a deformation gradient $F$ of the general triaxial form \eqref{eq:general_form}. Theorem \ref{theorem:main} shows that if $F_\alpha$ also satisfies Definition \ref{def:shearDeformation}, then $F$ must be of the form \eqref{eq:leftSimpleFiniteShearTheorem}. However, it is important to note that whether or not a deformation gradient $F$ corresponding to a pure shear stress satisfies Definition \ref{def:shearDeformation} depends on the particular stress response function. Similarly, not every constitutive law ensures that every idealized finite shear of the form \eqref{eq:leftSimpleFiniteShearTheorem} induces a pure shear stress tensor.

On the other hand, a number of reasons suggest that for an \emph{idealized} elastic material, finite simple shear deformations \emph{should} correspond to pure shear Cauchy stresses; for example:
\begin{itemize}
	\item Cauchy pure shear stress does not induce a pressure load, i.e.\ $\tr(\sigma)=0$. Therefore, it should be assumed that the pure stretch response is \emph{volume-preserving} (no Kelvin effect).\footnote{Moon and Truesdell \cite[2]{moon1974interpretation} remark that \enquote{[i]t is easily possible to conceive a \emph{truly isotropic solid} that [\ldots] retains its volume unchanged when subjected to a shear stress of any amount}; in the following, we give examples of such material models.} This corresponds to the case of infinitesimal shear, where the linear strain tensor $\eps$ is trace free.
	\item Cauchy pure shear stress acts only in the $e_1$-$e_2$-plane. Therefore, it is plausible (for an \emph{idealized} shear) to assume a pure stretch response only in the corresponding plane and no deformation in the third component ($\lambda_3=1$). Again, this behavior is always exhibited in the linearized case.
\end{itemize}

In the following, we therefore examine constitutive conditions for the elasticity law which ensure that \emph{every} left finite simple shear deformation $F_\alpha$ is mapped to a Cauchy pure shear stress. In Section \ref{section:shearCompatibilityReverse}, we will also discuss additional requirements which ensure the converse implication, i.e.\ conditions under which every Cauchy stress always induces a pure shear stretch. Again, note carefully that those two implications are \emph{distinct} properties of an elastic response function (cf.\ \cite{lurie2012nonlinear}), since the mapping $V\mapsto\sigma(V)=\sigmahat(V^2)$ is generally not invertible.
%
%
\subsection{Pure shear stress induced by pure shear stretch}
Recall from Lemma \ref{lem_P_und_T_kommutieren} that for any left finite simple shear deformation, all eigenvectors of $B=F_\alpha {F_\alpha}^T$ or $V_\alpha$ are eigenvectors of a Cauchy pure shear stress $\sigmahat(FF^T)$. More specifically,
\begin{align}
	V_\alpha=\sqrt{F_\alpha F_\alpha^T} =  \matr{\cosh(\alpha)&\sinh(\alpha)&0\\ \sinh(\alpha)&\cosh(\alpha)&0\\ 0&0&1} = Q\,\cdot\underbrace{\dmatr{\lambda}{\frac{1}{\lambda}}{1}}_{\mathclap{\text{pure shear deformation}}}\,\cdot\,\,Q^T\quad\text{ with }\quad \lambda=e^\alpha\label{eq:revision1}
\end{align}
and
\begin{align}
	\widehat\sigma(B)=\matr{0&s&0\\s&0&0\\0&0&0} = Q\,\dmatr{s}{-s}{0}\,Q^T\quad\text{ with }\quad Q = \frac{\sqrt{2}}{2} \, \matr{1&-1&0 \\ 1&1&0 \\ 0&0&1} \in \SO(3)\,.\label{eq:revision2}
\end{align}
Therefore, whether the Cauchy stress $\widehat\sigma(FF^T)$ corresponding to a left finite simple shear (or, equivalently, the Cauchy stress $\widehat\sigma(V^2)$ corresponding to a finite pure shear stretch $V=V_\alpha$ of the form \eqref{eq:definitionPureShearStretch}) is a pure shear stress depends only on the principal stresses $\sigma_1,\sigma_2,\sigma_3$. In particular, by comparing the diagonal matrices in \eqref{eq:revision1} and \eqref{eq:revision2}, we find that for a given stress response, a finite pure shear stretch \emph{always} induces a pure shear stress \emph{if and only if} for all $\lambda\in\Rp$, there exists $s\in\R$ such that
\begin{align}
	B=V^2= \dmatr{\lambda^2}{\frac{1}{\lambda^2}}{1} \qquad\implies\qquad \widehat\sigma(B) = \dmatr{s}{-s}{0}\label{eq:shearStretchStress}
\end{align}
or, equivalently,
\[
	\lambda_1=\lambda\,,\quad \lambda_2=\frac1\lambda\,,\quad \lambda_3=1 \qquad\implies\qquad
	\sigma_1=s\,,\quad \sigma_2=-s\,,\quad \sigma_3=0\,,
\]
where $\sigma_i$ denotes the $i$--th eigenvalue of $\widehat\sigma(FF^T)$ for $F=\diag(\lambda_1,\lambda_2,\lambda_3)$.

Recall that any isotropic Cauchy stress response $B\mapsto\sigmahat(B)$ can be expressed in the form \cite{richter1948}
\begin{align}
\label{eq:CauchyInBetas}
	\sigma=\beta_0\.\id+\beta_1\.B+\beta_{-1}\.B^{-1}
\end{align}	
with scalar-valued functions $\beta_0$, $\beta_1$, $\beta_{-1}$ depending on the matrix invariants $I_1$, $I_2$, $I_3$ of the stretch tensor $B$. In terms of this representation, equality \eqref{eq:shearStretchStress} reads
\begin{equation}
\label{eq:shearCorrespondenceInvariantsMatrix}
	\dmatr{s}{-s}{0}=\beta_0\dmatr{1}{1}{1}+\beta_1\dmatr{\lambda^2}{\frac{1}{\lambda^2}}{1}+\beta_{-1}\dmatr{\frac{1}{\lambda^2}}{\lambda^2}{1}\,.
\end{equation}
Assume without loss of generality that $\lambda\neq1$ and let $s=\beta_0+\beta_1\.\lambda^2+\beta_{-1}\.\frac{1}{\lambda^2}$. Then \eqref{eq:shearCorrespondenceInvariantsMatrix} is equivalent to the equations
\begin{equation}
\label{eq:shearCorrespondenceInvariantsScalars}
	-\left(\beta_0+\beta_1\.\lambda^2+\beta_{-1}\.\frac{1}{\lambda^2}\right)=\beta_0+\beta_1\.\frac{1}{\lambda^2}+\beta_{-1}\.\lambda^2\quad\text{ and }\quad	0=\beta_0+\beta_1+\beta_{-1}\,.
\end{equation}
Using the second equation, we can simplify the first one to
\begin{align*}
	0=2\.\beta_0+\beta_1\left(\lambda^2+\frac{1}{\lambda^2}\right)+\beta_{-1}\left(\frac{1}{\lambda^2}+\lambda^2\right)=2\underbrace{\left(\beta_0+\beta_1+\beta_{-1}\right)}_{=0}+\underbrace{\left(\lambda^2+\lambda^{-2}-2\right)}_{> 0}\left(\beta_1+\beta_{-1}\right)\,,
\end{align*}
i.e.\ to $\beta_1+\beta_{-1}=0$, which reduces the second equation $\eqref{eq:shearCorrespondenceInvariantsScalars}_2$ to $\beta_0=0$. The condition \eqref{eq:shearCorrespondenceInvariantsScalars} can therefore be stated as\par
\fbox{\parbox{.99\textwidth}{
\setlength{\abovedisplayskip}{0pt}
\setlength{\belowdisplayskip}{0pt}
\begin{align}
	\beta_1+\beta_{-1}=0\quad\text{ and }\quad\beta_0=0\qquad\text{for all }\lambda\in\Rp\text{ with }\lambda_1=\lambda\,,\; \lambda_2=\frac1\lambda\,,\; \lambda_3=1\,.\label{eq:CompatibilityBetas}
\end{align}}}\par
In the following, we will assume that the elasticity law is \emph{hyperelastic}, i.e.\ induced by an elastic energy function $W\col\GLp(3)\to\R$. Then the functions $\beta_i$ can be expressed by
\begin{align}
	\beta_0=\frac{2}{\sqrt{I_3}}\left(I_2\.\frac{\partial W}{\partial I_2}+I_3\.\frac{\partial W}{\partial I_3}\right)\,,\qquad\beta_1=\frac{2}{\sqrt{I_3}}\.\frac{\partial W}{\partial I_1}\,,\qquad\beta_{-1}=-2\sqrt{I_3}\.\frac{\partial W}{\partial I_2}\label{eq:CauchyHyper}
\end{align}
with the matrix invariants
\begin{align}
	I_1=\tr B=\lambda_1^2+\lambda_2^2+\lambda_3^2\,,\qquad I_2=\tr(\Cof B)=\lambda_1^2\.\lambda_2^2+\lambda_1^2\.\lambda_3^2+\lambda_2^2\.\lambda_3^2\,,\qquad I_3=\det B=\lambda_1^2\.\lambda_2^2\.\lambda_3^2\,.
\end{align}
of $B$. If the singular values of $F$ are given by $\lambda_1=\lambda$, $\lambda_2=\frac1\lambda$ and $\lambda_3=1$, then $I_1=I_2=1+\lambda^2+\frac{1}{\lambda^2}\geq 3$ and $I_3=1$. The compatibility conditions \eqref{eq:CompatibilityBetas} can be restated in terms of the hyperelastic energy function:
\begin{alignat*}{5}
	&&\beta_1+\beta_{-1}&=0\quad&&\text{ and }&\quad\beta_0&=0\qquad&&\text{for all }I_1=I_2\geq 3\,,\;I_3=1\\
	\iff\;\;&& \frac{2}{\sqrt{I_3}}\.\frac{\partial W}{\partial I_1}&=2\sqrt{I_3}\.\frac{\partial W}{\partial I_2}\quad&&\text{ and }&\quad\frac{2}{\sqrt{I_3}}\left(I_2\.\frac{\partial W}{\partial I_2}+I_3\.\frac{\partial W}{\partial I_3}\right)&=0\qquad&&\text{for all }I_1=I_2\geq 3\,,\;I_3=1
\end{alignat*}
or, equivalently,\par
\fbox{\parbox{.99\textwidth}{
\setlength{\abovedisplayskip}{0pt}
\setlength{\belowdisplayskip}{0pt}
\begin{align}
	\frac{\partial W}{\partial I_1}&=\frac{\partial W}{\partial I_2}&\quad&\text{ and }&\quad I_2\.\frac{\partial W}{\partial I_2}+\frac{\partial W}{\partial I_3}&=0\qquad\text{for all }I_1=I_2\geq 3\,,\;I_3=1\,.\label{eq:CompatibilityConditions}
\end{align}}}\par

Many isotropic energy functions are expressed more conveniently in terms of the principal stretches instead of the three matrix invariants $I_1$, $I_2$ and $I_3$, i.e.\ in the form
\begin{equation}
\label{eq:energySingularValueRepresentation}
	W(F) = W(\lambda_1,\lambda_2,\lambda_3) \qquad\text{ for all }\; F\in\GLp(3) \;\text{ with singular values }\; \lambda_1,\lambda_2,\lambda_3\,.
\end{equation}
The isotropy implies that this representation is invariant under permutations of the arguments. In order to find conditions on $W$ which ensure that our finite pure shear stretches correspond to pure shear stresses, we use the general formula \cite[p.216]{Ogden83}
\begin{equation}
\label{eq:ogdenFormula}
	\sigma_i = \frac{\lambda_i}{\lambda_1\,\lambda_2\,\lambda_3}\.\pdd{W}{\lambda_i}(\lambda_1,\lambda_2,\lambda_2)\,.
\end{equation}
for the eigenvalues of $\sigmahat(B)$.
Again, we want to ensure that \eqref{eq:shearStretchStress} holds.
Similar to \eqref{eq:CompatibilityBetas}, this leads to the two equations
\begin{align}
	\sigma_1+\sigma_2=0\quad\text{ and }\quad\sigma_3=0\qquad\text{for all }\lambda\in\Rp\text{ with }\lambda_1=\lambda\,,\; \lambda_2=\frac1\lambda\,,\; \lambda_3=1\,.\label{eq:CompatibilitySigmas}
\end{align}
With \eqref{eq:ogdenFormula}, these two compatibility conditions read\par
\fbox{\parbox{0.99\textwidth}{
\setlength{\abovedisplayskip}{0pt}
\setlength{\belowdisplayskip}{0pt}
\begin{equation}
	\lambda\.\frac{\partial W}{\partial \lambda_1}+\frac{1}{\lambda}\.\frac{\partial W}{\partial \lambda_2}=0\quad\text{ and }\quad\frac{\partial W}{\partial \lambda_3}=0\qquad\text{for all }\lambda\in\Rp\text{ with }\lambda_1=\lambda\,,\; \lambda_2=\frac1\lambda\,,\; \lambda_3=1\,.\label{eq:CompatibilityConditions2}
\end{equation}}}

The following Lemma gives a simple sufficient criterion for eq.\ \eqref{eq:CompatibilityConditions2} and thus for the desired implication \eqref{eq:shearStretchStress}.
	
\begin{lemma}
\label{lemma:tensionCompressionSymmetric}
	Let $W\col\GLp(3)\to\R$ be a sufficiently smooth isotropic elastic energy potential. If $W$ is \emph{tension-compression symmetric}, i.e.\ if $W(F\inv)=W(F)$ for all $F\in\GLp(3)$, then $\sigmahat(B)=\sigmahat(V^2)$ is a pure shear stress for every finite pure shear stretch $V=V_\alpha$.
\end{lemma}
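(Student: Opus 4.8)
The plan is to reduce the assertion to the compatibility conditions \eqref{eq:CompatibilityConditions2}, which were shown above to be equivalent to the desired implication \eqref{eq:shearStretchStress}; it then suffices to verify, at $\lambda_1=\lambda$, $\lambda_2=\frac1\lambda$, $\lambda_3=1$ and for every $\lambda\in\Rp$, both $\lambda\.\frac{\partial W}{\partial\lambda_1}+\frac1\lambda\.\frac{\partial W}{\partial\lambda_2}=0$ and $\frac{\partial W}{\partial\lambda_3}=0$. First I would rewrite the hypothesis in the singular-value representation \eqref{eq:energySingularValueRepresentation}: since the singular values of $F\inv$ are $\frac1{\lambda_1},\frac1{\lambda_2},\frac1{\lambda_3}$, tension-compression symmetry $W(F\inv)=W(F)$ becomes the pointwise identity
\[
	W\!\left(\tfrac1{\lambda_1},\tfrac1{\lambda_2},\tfrac1{\lambda_3}\right)=W(\lambda_1,\lambda_2,\lambda_3)\qquad\text{for all }\lambda_1,\lambda_2,\lambda_3\in\Rp\,.
\]

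Next I would differentiate this identity with respect to $\lambda_1$ and with respect to $\lambda_3$, each time picking up a chain-rule factor $-\frac1{\lambda_i^2}$, and then evaluate at the pure shear triple $(\lambda,\frac1\lambda,1)$, whose image under inversion is the \emph{permuted} triple $(\frac1\lambda,\lambda,1)$. The isotropy of $W$ --- invariance of \eqref{eq:energySingularValueRepresentation} under permutation of its arguments --- supplies, upon differentiating the swap identity $W(\lambda_1,\lambda_2,\lambda_3)=W(\lambda_2,\lambda_1,\lambda_3)$, the relations $\frac{\partial W}{\partial\lambda_1}(\frac1\lambda,\lambda,1)=\frac{\partial W}{\partial\lambda_2}(\lambda,\frac1\lambda,1)$ and $\frac{\partial W}{\partial\lambda_3}(\frac1\lambda,\lambda,1)=\frac{\partial W}{\partial\lambda_3}(\lambda,\frac1\lambda,1)$. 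Substituting these into the two differentiated inversion identities yields, at the point $(\lambda,\frac1\lambda,1)$,
\[
	\frac{\partial W}{\partial\lambda_2}=-\lambda^2\.\frac{\partial W}{\partial\lambda_1}\qquad\text{and}\qquad -\frac{\partial W}{\partial\lambda_3}=\frac{\partial W}{\partial\lambda_3}\,.
\]

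The second relation immediately forces $\frac{\partial W}{\partial\lambda_3}=0$, giving the second condition in \eqref{eq:CompatibilityConditions2}. Inserting the first relation into $\lambda\.\frac{\partial W}{\partial\lambda_1}+\frac1\lambda\.\frac{\partial W}{\partial\lambda_2}$ collapses it to $\lambda\.\frac{\partial W}{\partial\lambda_1}-\lambda\.\frac{\partial W}{\partial\lambda_1}=0$, giving the first. Both parts of \eqref{eq:CompatibilityConditions2} thus hold for every $\lambda\in\Rp$, and the stated pure-shear-stress property follows.

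The main obstacle is bookkeeping rather than conceptual: one must correctly \emph{compose} the inversion symmetry with the permutation symmetry while tracking the chain-rule factors at the evaluation point, the key being that inversion carries $(\lambda,\frac1\lambda,1)$ not to itself but to the swapped triple $(\frac1\lambda,\lambda,1)$ --- it is exactly this swap that lets the isotropy of $W$ close the computation. The only regularity needed is that $W$ be smooth enough to differentiate the symmetry identities, which is granted by hypothesis.
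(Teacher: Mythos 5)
Your proof is correct and follows essentially the same route as the paper's: both arguments reduce the claim to the compatibility conditions \eqref{eq:CompatibilityConditions2} and verify them by combining the inversion identity $W(\lambda_1,\lambda_2,\lambda_3)=W(\tfrac{1}{\lambda_1},\tfrac{1}{\lambda_2},\tfrac{1}{\lambda_3})$ with the permutation invariance supplied by isotropy and the chain rule, exploiting that inversion sends $(\lambda,\tfrac1\lambda,1)$ to the swapped triple $(\tfrac1\lambda,\lambda,1)$. The only cosmetic difference is that you differentiate the two symmetry identities separately and then substitute, whereas the paper folds both symmetries into a single chain of directional-derivative equalities; the logical content is identical.
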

\begin{proof}
	In terms of the singular values, the condition of tension-compression symmetry reads
	\begin{equation}\label{eq:tensionCompressionSymmetrySingularValues}
		W\left(\frac{1}{\lambda_1},\frac{1}{\lambda_2},\frac{1}{\lambda_3}\right) = W(\lambda_1,\lambda_2,\lambda_3) \qquad\text{for all }\;\lambda_1,\lambda_2,\lambda_2\in\Rp\,,
	\end{equation}
	which yields
	\begin{align*}
		\frac{\partial W}{\partial \lambda_1}\left(\lambda,\frac{1}{\lambda},1\right)&=\ddt\left. W\left(\lambda+t,\frac{1}{\lambda},1\right)\right|_{t=0}=\ddt\left. W\left(\frac{1}{\lambda+t},\lambda,1\right)\right|_{t=0}=\ddt\left. W\left(\lambda,\frac{1}{\lambda+t},1\right)\right|_{t=0}\\
		&=\left.\frac{\partial W}{\partial \lambda_2}\left(\lambda,\frac{1}{\lambda+t},1\right)\frac{-1}{(\lambda+t)^2}\right|_{t=0}=-\frac{1}{\lambda^2}\frac{\partial W}{\partial \lambda_2}\left(\lambda,\frac{1}{\lambda},1\right)
	\end{align*}
	and
	\begin{align*}
		\frac{\partial W}{\partial \lambda_3}\left(\lambda,\frac{1}{\lambda},1\right)&=\ddt\left.W\left(\lambda,\frac{1}{\lambda},1+t\right)\right|_{t=0}=\ddt\left.W\left(\frac{1}{\lambda},\lambda,\frac{1}{1+t}\right)\right|_{t=0}=\ddt\left.W\left(\lambda,\frac{1}{\lambda},\frac{1}{1+t}\right)\right|_{t=0}\\
		&=\left.\frac{\partial W}{\partial \lambda_3}\left(\lambda,\frac{1}{\lambda},\frac{1}{1+t}\right)\frac{-1}{(1+t)^2}\right|_{t=0}=-\frac{\partial W}{\partial \lambda_3}\left(\lambda,\frac{1}{\lambda},1\right)\,,
	\end{align*}
	thus the conditions \eqref{eq:CompatibilityConditions2} are both satisfied in this case.
\end{proof}

In terms of the principal invariants, the tension-compression symmetry of an energy $W$ can also be expressed as $W(I_1,I_2,I_3)=W(\frac{I_2}{I_3},\frac{I_1}{I_3},\frac{1}{I_3})$.
In the context of \emph{anti-plane shear deformations} (cf.\ Appendix \ref{appendix:shearMonotonicity}), it was recently shown \cite{voss2018aps} that the so-called \enquote{condition K1 with $b=\frac{1}{2}$}, which is exactly the equality $\frac{\partial W}{\partial I_1}=\frac{\partial W}{\partial I_2}$ given in eq.\ \eqref{eq:CompatibilityConditions}, is always satisfied by tension-compression symmetric energies.

Lemma \ref{lemma:tensionCompressionSymmetric} can be extended to include a volumetric term, yielding a more applicable sufficient criterion for the compatibility of shear deformations and shear stress.

\begin{theorem}
\label{theorem:shearCompatibilityMainResult}
	Let $W\col\GLp(3)\to\R$ be an elastic energy potential of the form
	\begin{equation}
		W(F) = \Wsym(F) + f(\det F)\,,
	\end{equation}
	where $\Wsym\col\GLp(3)\to\R$ is a sufficiently smooth tension-compression symmetric function and $f\col\Rp\to\R$ is differentiable with $f'(1)=0$. Then $\sigmahat(B)=\sigmahat(V^2)$ is a pure shear stress for every finite pure shear stretch $V=V_\alpha$.
\end{theorem}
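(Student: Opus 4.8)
The plan is to reduce everything to Lemma \ref{lemma:tensionCompressionSymmetric} by showing that the additive volumetric term $f(\det F)$ contributes nothing to the relevant stress components at a finite pure shear stretch. It suffices to verify the compatibility conditions \eqref{eq:CompatibilityConditions2} for the full energy $W = \Wsym + f(\det F)$, since (as established in the text preceding the lemma) these conditions are exactly what guarantees that $\widehat\sigma(V^2)$ is a pure shear stress for $V = V_\alpha$.

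First I would split the singular-value derivatives additively. Writing $W$ in the representation \eqref{eq:energySingularValueRepresentation} and using $\det F = \lambda_1\,\lambda_2\,\lambda_3$, the chain rule gives
\[
	\frac{\partial W}{\partial \lambda_i} = \frac{\partial \Wsym}{\partial \lambda_i} + f'(\det F)\cdot\frac{\lambda_1\,\lambda_2\,\lambda_3}{\lambda_i}\,,
\]
so the volumetric part enters each stretch derivative only through the common scalar factor $f'(\det F)$.

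The key observation -- and essentially the only substantive step -- is that a finite pure shear stretch is volume preserving: for $\lambda_1 = \lambda$, $\lambda_2 = \frac{1}{\lambda}$, $\lambda_3 = 1$ one has $\det F = \lambda\cdot\frac{1}{\lambda}\cdot 1 = 1$. Hence $f'(\det F) = f'(1) = 0$ by hypothesis, and the entire volumetric contribution to $\frac{\partial W}{\partial \lambda_i}$ vanishes at precisely the configurations appearing in \eqref{eq:CompatibilityConditions2}. Consequently, on these configurations the stretch derivatives of $W$ coincide with those of $\Wsym$.

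It then remains to invoke Lemma \ref{lemma:tensionCompressionSymmetric} applied to the tension-compression symmetric part $\Wsym$: that lemma shows $\Wsym$ satisfies \eqref{eq:CompatibilityConditions2}, and since the $f$-terms drop out, $W$ satisfies them as well. Therefore $\widehat\sigma(B) = \widehat\sigma(V^2)$ is a pure shear stress for every finite pure shear stretch $V = V_\alpha$, as claimed. I do not expect any genuine obstacle here; the whole argument is the remark that $f'(1) = 0$ is exactly the hypothesis needed so that an otherwise arbitrary volumetric response cannot disturb the vanishing third component and the antisymmetric pairing $\sigma_1 = -\sigma_2$ forced by the pure shear geometry -- thematically the same mechanism as the Kelvin-effect discussion around \eqref{eq:richter}.
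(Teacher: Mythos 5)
Your proposal is correct and follows essentially the same route as the paper's own proof: the chain-rule split $\frac{\partial W}{\partial \lambda_i} = \frac{\partial \Wsym}{\partial \lambda_i} + \frac{\lambda_1\lambda_2\lambda_3}{\lambda_i}\,f'(\lambda_1\lambda_2\lambda_3)$, the observation that $\det F = 1$ on pure shear stretches so $f'(1)=0$ annihilates the volumetric contribution, and the reduction to Lemma \ref{lemma:tensionCompressionSymmetric} to verify the conditions \eqref{eq:CompatibilityConditions2} for $W$. No gaps.
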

\begin{proof}
	Since
	\[
		\frac{\partial W}{\partial \lambda_i}(\lambda_1,\lambda_2,\lambda_3)
		= \frac{\partial \Wsym}{\partial \lambda_i}(\lambda_1,\lambda_2,\lambda_3) + \frac{d}{d\lambda_i}\.f(\lambda_1\lambda_2\lambda_3)
		= \frac{\partial \Wsym}{\partial \lambda_i}(\lambda_1,\lambda_2,\lambda_3) + \frac{\lambda_1\lambda_2\lambda_3}{\lambda_i}\.f'(\lambda_1\lambda_2\lambda_3)\,,
	\]
	we can apply Lemma \ref{lemma:tensionCompressionSymmetric} to $\Wsym$ and compute
	\[
		\frac{\partial W}{\partial \lambda_1}\left(\lambda,\frac{1}{\lambda},1\right)
		= \frac{\partial \Wsym}{\partial \lambda_1}\left(\lambda,\frac{1}{\lambda},1\right) + \underbrace{ \frac{1}{\lambda}\.f'(1) \vphantom{\frac{1}{\frac1\lambda}} }_{=0}
		= -\frac{1}{\lambda^2}\frac{\partial \Wsym}{\partial \lambda_2}\left(\lambda,\frac{1}{\lambda},1\right) - \underbrace{\frac{1}{\frac1\lambda}\.f'(1)}_{=0}
		= -\frac{1}{\lambda^2}\frac{\partial W}{\partial \lambda_2}\left(\lambda,\frac{1}{\lambda},1\right)
	\]
	as well as
	\[
		\frac{\partial W}{\partial \lambda_3}\left(\lambda,\frac{1}{\lambda},1\right)
		= \frac{\partial \Wsym}{\partial \lambda_3}\left(\lambda,\frac{1}{\lambda},1\right) + \underbrace{f'(1)}_{=0}
		= -\frac{\partial \Wsym}{\partial \lambda_2}\left(\lambda,\frac{1}{\lambda},1\right) - \underbrace{f'(1)}_{=0}
		= -\frac{\partial W}{\partial \lambda_3}\left(\lambda,\frac{1}{\lambda},1\right)
	\]
	for all $F\in\GLp(3)$ with singular values $\lambda_1,\lambda_2,\lambda_3$, thus both conditions \eqref{eq:CompatibilityConditions2} are satisfied under the theorem's assumptions.
\end{proof}

\begin{example}\label{lem_hencky-typ}
	For the important class of \emph{Hencky type} isotropic elastic energy functions \cite{agn_neff2014axiomatic,Hencky1928,Hencky1929,agn_neff2015geometry,agn_neff2015exponentiatedI,agn_neff2015exponentiatedII,nedjar2017finite,agn_montella2015exponentiated}
	\begin{equation}
	\label{eq:henckyTypeEnergies}
		W(F) = \mathcal W\bigl(\norm{\dev\log V}^2,\ \left|\tr\log V\right|^2\bigr)\,,
	\end{equation}
 	the requirement of tension-compression symmetry is always satisfied. Thus for every energy of the form \eqref{eq:henckyTypeEnergies}, every finite pure shear stretch $V_\alpha$ corresponds to a Cauchy pure shear stress $\sigmahat(B)$. These energies include the classical Hencky energy \cite{Hencky1929}
 	\begin{equation}\label{eq:henckyEnergy}
	 	\WH(F) = \mu\,\norm{\dev_3 \log U}^2 + \frac \kappa 2\left(\tr(\log U)\right)^2\ =\ \mu\,\norm{\log U}^2 + \frac \Lambda 2\left(\tr(\log U)\right)^2
 	\end{equation}
 	as well as the so-called \emph{exponentiated Hencky energy} \cite{agn_neff2015exponentiatedI} 
	\[
		\WeH(U)=\frac{\mu}{k}\.e^{k\norm{\dev\log V}^2}+\frac{\kappa}{2\.\widehat k}\.e^{\widehat k [\tr\log V]^2}\,.
	\]
\end{example}

A particularly interesting application of Theorem \ref{theorem:shearCompatibilityMainResult} is the class of energy functions which exhibit an \emph{additive isochoric-volumetric split}, i.e.\ energies of the form
\[
	W(F) = \Wiso\left(\frac{F}{(\det F)^{\afrac13}}\right) + f(\det F)\,.
\]
In terms of the principal invariants, such energy functions can also be expressed as
\[
	W(I_1,I_2,I_3)=\Wiso(I_1I_3^{-\frac{1}{3}},I_2I_3^{-\frac{2}{3}})+f(I_3)\,.
\]
Note that the condition $f'(1)=0$ is necessarily satisfied for such energy functions if the reference configuration $F=\id$ is stress free.

\begin{corollary}
\label{corollary:volIsoSplit}
	Let $W$ be an isotropic elastic energy with an additive isochoric-volumetric split and a stress-free reference configuration. If $\Wiso$ is tension-compression symmetric\footnote{%
		Note that tension-compression symmetry of $W(F)$ implies tension-compression symmetry of $\Wiso(J_1,J_2)$ because $W(F)=W(F\inv)\iff W(\frac{I_2}{I_3},\frac{I_1}{I_3},\frac{1}{I_3})\overset{I_3=1}{\implies}W(I_1,I_2,1)=W(I_2,I_1,1)\iff\Wiso(J_1,J_2)=\Wiso(J_2,J_1)$.%
		}, i.e.\ if $\Wiso(J_1,J_2)=\Wiso(J_2,J_1)$ for all $F\in\GLp(3)$, then $\sigmahat(B)$ is a pure shear stress for every finite pure shear stretch $V=V_\alpha$.
\end{corollary}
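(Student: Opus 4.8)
The plan is to verify that $W$ satisfies the hypotheses of Theorem \ref{theorem:shearCompatibilityMainResult}, which then yields the claim directly. Writing $\Wsym(F) \colonequals \Wiso\bigl(F/(\det F)^{1/3}\bigr)$, the energy already takes the required form $W(F) = \Wsym(F) + f(\det F)$ with the same volumetric function $f$. It therefore suffices to establish (i) that $\Wsym$ is tension-compression symmetric on all of $\GLp(3)$ and (ii) that $f'(1) = 0$; the conclusion then follows verbatim from Theorem \ref{theorem:shearCompatibilityMainResult}.

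For (i), I would express $\Wsym$ through the isochoric invariants $J_1 = I_1\,I_3^{-1/3}$ and $J_2 = I_2\,I_3^{-2/3}$ of $B = FF^T$, so that $\Wsym(F) = \Wiso(J_1,J_2)$. The key step is to track how these invariants transform under $F \mapsto F^{-1}$, i.e.\ under replacing $B$ by $B^{-1}$. Since the eigenvalues of $B^{-1}$ are the reciprocals $\lambda_i^{-2}$ of those of $B$, a short computation gives $I_1(B^{-1}) = I_2/I_3$, $I_2(B^{-1}) = I_1/I_3$ and $I_3(B^{-1}) = 1/I_3$, whence
\[
	J_1(F^{-1}) = \frac{I_2}{I_3}\,\Bigl(\tfrac{1}{I_3}\Bigr)^{-1/3} = I_2\,I_3^{-2/3} = J_2(F)\,, \qquad J_2(F^{-1}) = \frac{I_1}{I_3}\,\Bigl(\tfrac{1}{I_3}\Bigr)^{-2/3} = I_1\,I_3^{-1/3} = J_1(F)\,.
\]
Thus inversion simply swaps the two isochoric invariants. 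Combining this with the assumed symmetry $\Wiso(J_1,J_2) = \Wiso(J_2,J_1)$ yields $\Wsym(F^{-1}) = \Wiso(J_2,J_1) = \Wiso(J_1,J_2) = \Wsym(F)$, i.e.\ $\Wsym$ is tension-compression symmetric.

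For (ii), I would invoke the stress-free reference configuration together with the structure of the split: the isochoric term contributes only a deviatoric (trace-free) part to the Cauchy stress, so that $\tr\widehat\sigma$ is governed entirely by $f$. Evaluating at $F=\id$ and imposing $\widehat\sigma(\id)=0$ then forces $f'(1)=0$, exactly as recorded in the remark preceding the corollary. With (i) and (ii) in hand, Theorem \ref{theorem:shearCompatibilityMainResult} applies and shows that $\widehat\sigma(V^2)$ is a pure shear stress for every finite pure shear stretch $V=V_\alpha$.

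The only genuinely computational point is the invariant bookkeeping in (i); everything else is a direct appeal to the already-established Theorem \ref{theorem:shearCompatibilityMainResult}. I expect no real obstacle here, the single thing to be careful about being the exponents on $I_3$ when forming the isochoric invariants, since an error there would spoil the clean $J_1 \leftrightarrow J_2$ swap on which the whole reduction rests.
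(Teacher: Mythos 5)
Your proposal is correct and takes essentially the same route as the paper: the corollary is a direct application of Theorem \ref{theorem:shearCompatibilityMainResult}, where your computation $I_1(B^{-1})=I_2/I_3$, $I_2(B^{-1})=I_1/I_3$, $I_3(B^{-1})=1/I_3$ and the resulting swap $J_1 \leftrightarrow J_2$ under inversion is precisely the content of the paper's footnote, and $f'(1)=0$ follows from the stress-free reference configuration exactly as noted in the paper's remark preceding the corollary (cf.\ eq.\ \eqref{eq:richter}). No gaps.
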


\begin{example}
	As an example for the application of Corollary \ref{corollary:volIsoSplit}, consider a special format of a slightly compressible Mooney-Rivlin energy 
	\begin{align}
		W(I_1,I_2,I_3)=\frac{\mu}{4}\.\left((I_1I_3^{-\frac{1}{3}}-3)+(I_2I_3^{-\frac{2}{3}}-3)\right)+f(I_3)\,,
	\end{align}
	consisting of the tension-compression symmetric isochoric term $\Wiso(J_1,J_2)=\frac{\mu}{4}\.\left((J_1-3)+(J_2-3)\right)$ and arbitrary volumetric term $f(I_3)$ with $f'(1)=0$.\qed
\end{example}

Another important class of functions to (some of) which Theorem \ref{theorem:shearCompatibilityMainResult} is applicable are the so-called \emph{Valanis-Landel energies} of the form
\[
	W(\lambda_1,\lambda_2,\lambda_3) = \sum_{i=1}^3 w(\lambda_i) + f(\det F)
\]
with functions $w,f\col\Rp\to\R$. These energies were originally introduced by Valanis and Landel \cite{valanis1967} as a model for incompressible materials, but are often coupled with a volumetric term and applied to the compressible case as well.

\begin{corollary}\label{corollary:pureShearValanisLandel}
	Let $W$ be an isotropic elastic energy of the \emph{generalized Valanis-Landel form}, i.e.
	\begin{equation}
		W(F) \;=\; \sum_{i=1}^3 w(\lambda_i) \;+\; f(\det F) \;=\; w(\lambda_1) + w(\lambda_2) + w(\lambda_3) + f(\lambda_1\,\lambda_2\,\lambda_3)\label{eq:definitionValanisLandelGeneralForm}
	\end{equation}
	for all $F\in\GLp(3)$ with singular values $\lambda_1,\lambda_2,\lambda_3$, where $w,f\col\Rp\to\R$ are differentiable functions. If the reference configuration is stress free and $w$ is tension-compression symmetric, i.e.\ $w(\lambda)=w(\tfrac{1}{\lambda})$ for all $\lambda\in\Rp$, then $\sigmahat(B)$ is a pure shear stress for every finite pure shear stretch $V=V_\alpha$.
\end{corollary}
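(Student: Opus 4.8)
The plan is to deduce this directly from Theorem~\ref{theorem:shearCompatibilityMainResult}. Setting $\Wsym(F) := \sum_{i=1}^3 w(\lambda_i) = w(\lambda_1)+w(\lambda_2)+w(\lambda_3)$, the energy already has the required additive form $W(F) = \Wsym(F) + f(\det F)$, so it suffices to verify the theorem's two hypotheses: that $\Wsym$ is tension-compression symmetric and that $f'(1)=0$.

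For the first hypothesis I would use that $F\inv$ has singular values $\tfrac{1}{\lambda_1},\tfrac{1}{\lambda_2},\tfrac{1}{\lambda_3}$, so that applying the assumed symmetry $w(\lambda)=w(\tfrac1\lambda)$ termwise gives
\[
	\Wsym(F\inv) = \sum_{i=1}^3 w\!\left(\tfrac{1}{\lambda_i}\right) = \sum_{i=1}^3 w(\lambda_i) = \Wsym(F)\,,
\]
which is exactly tension-compression symmetry of $\Wsym$ in the sense of \eqref{eq:tensionCompressionSymmetrySingularValues}.

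For the second hypothesis the key preliminary step is to extract $w'(1)=0$ from the symmetry of $w$ alone: differentiating $w(\lambda)=w(\tfrac1\lambda)$ yields $w'(\lambda) = -\tfrac{1}{\lambda^2}\,w'(\tfrac1\lambda)$, and evaluating at $\lambda=1$ gives $w'(1)=-w'(1)$, hence $w'(1)=0$. Next I would invoke the stress-free reference configuration: since
\[
	\frac{\partial W}{\partial \lambda_i}(\lambda_1,\lambda_2,\lambda_3) = w'(\lambda_i) + \frac{\lambda_1\lambda_2\lambda_3}{\lambda_i}\,f'(\lambda_1\lambda_2\lambda_3)\,,
\]
Ogden's formula \eqref{eq:ogdenFormula} gives, at $F=\id$ where $\lambda_1=\lambda_2=\lambda_3=1$, the principal stresses $\sigma_i = \frac{\partial W}{\partial\lambda_i}(1,1,1) = w'(1)+f'(1)$. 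The requirement $\sigmahat(\id)=0$ together with $w'(1)=0$ then forces $f'(1)=0$, as needed.

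With both hypotheses established, Theorem~\ref{theorem:shearCompatibilityMainResult} applies verbatim and delivers the conclusion that $\widehat\sigma(B)=\widehat\sigma(V^2)$ is a pure shear stress for every finite pure shear stretch $V=V_\alpha$. I do not expect any genuine obstacle here; the only point I would state carefully is that Theorem~\ref{theorem:shearCompatibilityMainResult} nominally asks $\Wsym$ to be \emph{sufficiently smooth} while $w$ is only assumed differentiable. This is harmless, since the entire chain of reasoning (Lemma~\ref{lemma:tensionCompressionSymmetric} and the compatibility conditions \eqref{eq:CompatibilityConditions2}) uses only first derivatives of $\Wsym$ evaluated along the curve $\lambda\mapsto(\lambda,\tfrac1\lambda,1)$, so differentiability of $w$ is exactly what the argument consumes.
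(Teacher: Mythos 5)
Your proof is correct and follows exactly the paper's route: reduce to Theorem~\ref{theorem:shearCompatibilityMainResult} by checking that $\Wsym(F)=\sum_{i=1}^3 w(\lambda_i)$ inherits tension-compression symmetry from $w$ and that the stress-free reference configuration yields $f'(1)=0$. In fact you supply a detail the paper's one-line proof glosses over — namely that the stress-free condition only gives $w'(1)+f'(1)=0$, so one must first extract $w'(1)=0$ from the symmetry $w(\lambda)=w(\tfrac1\lambda)$ before concluding $f'(1)=0$ — and your observation that only first derivatives along $\lambda\mapsto(\lambda,\tfrac1\lambda,1)$ are consumed correctly disposes of the nominal smoothness hypothesis.
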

\begin{proof}
	It is easy to verify that a function $W\col\GLp(3)$ with $W(F)=\sum_{i=1}^3 w(\lambda_i)$ for all $F$ with singular values $\lambda_i$ is tension-compression symmetric if and only if $w(\frac1\lambda)=w(\lambda)$ and that the requirement of a stress-free reference configuration implies $f'(1)=0$, thus Theorem \ref{theorem:shearCompatibilityMainResult} is directly applicable.
\end{proof}

The condition $w(\frac1\lambda)=w(\lambda)$ is satisfied for a large variety of Valanis-Landel type elastic energy potentials, including, for example, the \emph{Ba\v{z}ant energy} \cite{bazant1998}
\[
	W(F) =\frac{\mu}{4}\. \norm{B-B\inv}^2 =\frac{\mu}{4}\. \sum_{i=1}^3 \left(\lambda_i^2-\frac{1}{\lambda_i^2}\right)^2
\]
or, again, the logarithmic-quadratic Hencky energy \eqref{eq:henckyEnergy}, which can be expressed in the form \eqref{eq:definitionValanisLandelGeneralForm} with $w(\lambda)=\mu\,\log^2(\lambda)$ and $f(d)=\frac \Lambda 2(\log d)^2$.

Of course, without any additional conditions, a pure shear stretch does \emph{not} necessarily induce a Cauchy pure shear stress for arbitrary constitutive laws.
\begin{example}
	Consider the \emph{Blatz-Ko} type energy \cite{horgan1996remarks}
	\begin{align}
		W(F)=\frac{\mu}{2}\left(\norm{F}^2+\frac{2}{\det F}-5\right)=\frac{\mu}{2}\left(I_1+\frac{2}{\sqrt{I_3}}-5\right)
	\end{align}
	with the corresponding Cauchy stress response
	\begin{align}
		\sigmahat(B)=\beta_0\.\id+\beta_1\.B=\frac{\mu}{\sqrt{I_3}}\.B-\frac{\mu}{I_3}\.\id=\frac{\mu}{I_3}\left(\sqrt{I_3}\.B-\id\right)\,.
	\end{align}
	For a deformation of the form \eqref{eq:shearStretchStress}, we find
	\[
		\sigmahat(B)
		=\mu\.\diag(\lambda-1\,,\frac{1}{\lambda}-1\,,0)\,.
	\]
	Thus, a finite pure shear stretch $V_\alpha$ can only correspond to a Cauchy pure shear stress if
	\[
		\lambda-1=1-\frac{1}{\lambda}\quad\iff\quad\lambda+\frac{1}{\lambda}=2\quad\iff\quad\lambda= 1\,,
	\]
	i.e.\ only in the trivial case $V=\id$.
	
	Furthermore, if $\sigma$ is a (diagonal) Cauchy pure shear stress of the form $\sigma=\diag(s,-s,0)$, then the eigenvalues of the corresponding stretch $V=\diag(\lambda_1,\lambda_2,\lambda_3)$ satisfy the equalities
	\begin{alignat*}{4}
		&&\diag(s,-s,0)&\mathrlap{%
			=\frac{\mu}{I_3}\left(\sqrt{I_3}\.B-\id\right)=\frac{\mu}{I_3}\left(\sqrt{I_3}\.\diag(\lambda_1^2,\lambda_2^2,\lambda_3^2)-\id\right)%
		}\\
		\implies&&\qquad 0&=\sqrt{I_3}\.\lambda_3^2-1\qquad&&\text{and}&\qquad \sqrt{I_3}\.\lambda_1^2-1&=-\left(\sqrt{I_3}\lambda_2^2-1\right)\\
		\iff&&\qquad 1&=\sqrt{I_3}\.\lambda_3^2\qquad&&\text{and}&\qquad \sqrt{I_3}\left(\lambda_1^2+\lambda_2^2\right)&=2
	\end{alignat*}
	If the stretch is volume preserving ($I_3=1$), then the first equation yields $\lambda_3=1$ (planarity). Similarly, if the stretch is planar, then the first equation reads $I_3=1$, i.e.\ the stretch is volume preserving Since both properties can only be satisfied simultaneously for $V=\id$, any non-trivial deformation corresponding to a Cauchy pure shear stress is \emph{neither} volume-preserving \emph{nor} planar.\qed
\end{example}
%
%
\subsection{Pure shear stretch induced by pure shear stress}
\label{section:shearCompatibilityReverse}
We now return to the question whether (or, more precisely, under which conditions) pure shear Cauchy stress induces a pure shear stretch $V_\alpha$. Note again that while Theorem \ref{theorem:shearCompatibilityMainResult} and the subsequent corollaries ensure that every pure shear stretch $V_\alpha$ induces a Cauchy pure shear stress, additional assumptions on the energy function are required to ensure the reverse implication since the Cauchy stress response is generally not invertible. The following theorem uses the condition of \emph{Hill's (strict) inequality} \cite{hill1970}
\begin{equation}\label{eq:hillStrict}
	\iprod{\tau(V_1)-\tau(V_2),\, \log(V_1)-\log(V_2)} > 0 \qquad\text{for all }\; V_1,V_2\in\PSymn\,,\; V_1\neq V_2\,,
\end{equation}
where $\tau(V)=\det(V)\cdot\sigma(V)=\det(V)\cdot\sigmahat(V^2)$ denotes the \emph{Kirchhoff stress} corresponding to the stretch $V$. For hyperelastic materials, this inequality is equivalent to the strict convexity of the mapping $X\mapsto W(\exp(X))$ on $\Symn$.

\begin{theorem}
\label{theorem:reverseCondition}
	Let $W$ be a sufficiently smooth isotropic elastic energy satisfying the conditions \eqref{eq:CompatibilityConditions2}. Furthermore, assume that $W$ is $p$-coercive for some $p\geq1$, i.e.\ $W(F)\geq d+c\cdot\norm{F}^p$ for some $d\in\R$ and $c>0$, and that $W$ satisfies Hill's (strict) inequality \eqref{eq:hillStrict}. Then $\sigma(V)$ is a pure shear stress tensor \emph{if and only if} $V$ is a pure shear stretch.
\end{theorem}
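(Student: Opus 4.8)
The plan is to treat the two implications separately, with the forward direction essentially free and the reverse direction carrying all the analytic content. For the forward implication I would note that the assumed compatibility conditions \eqref{eq:CompatibilityConditions2} are, by the derivation preceding them, exactly equivalent to the statement that every finite pure shear stretch $V=V_\alpha$ induces a Cauchy pure shear stress; hence this direction requires nothing new. For the reverse implication I would first reduce everything to the Kirchhoff stress. Since $\tau(V)=\det(V)\,\sigmahat(V^2)$ and $\det V>0$, the tensor $\sigmahat(V^2)$ is a pure shear stress $s\,(e_1\otimes e_2+e_2\otimes e_1)$ if and only if $\tau(V)$ is, so it suffices to characterize those $V$ for which $\tau(V)$ lies on the line $\R K$ with $K\colonequals e_1\otimes e_2+e_2\otimes e_1$. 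The key observation is that the logarithms of the pure shear stretches lie on the \emph{same} line, namely $\log V_\alpha=\alpha K$.

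Next I would exploit Hill's inequality. Writing $\Phi(X)\colonequals W(\exp X)$ on $\Sym(3)$, the Kirchhoff stress is the gradient $\tau(V)=D\Phi(\log V)$, and Hill's strict inequality \eqref{eq:hillStrict} is precisely the strict monotonicity $\iprod{\tau(V_1)-\tau(V_2),\,\log V_1-\log V_2}>0$ for $V_1\neq V_2$. In particular the map $V\mapsto\tau(V)$ is \emph{injective} on $\PSym(3)$. Thus, to identify an arbitrary $V$ with a pure shear stretch, it is enough to produce \emph{some} pure shear stretch $V_\alpha$ with $\tau(V_\alpha)=\tau(V)$; injectivity then forces $V=V_\alpha$.

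This is where the coercivity enters, and it is the main obstacle: I must show that the scalar amount-of-stress function $\alpha\mapsto s(\alpha)$, defined by $\tau(V_\alpha)=s(\alpha)\,K$ (which is meaningful by the forward direction together with $\det V_\alpha=1$), is a \emph{bijection} of $\R$ onto $\R$. Setting $\phi(\alpha)\colonequals\Phi(\alpha K)=W(V_\alpha)$, one computes $\phi'(\alpha)=\iprod{\tau(V_\alpha),K}=s(\alpha)\,\norm{K}^2=2\,s(\alpha)$. Monotonicity of $s$ (hence convexity of $\phi$) is immediate from Hill's inequality restricted to the line $\R K$. For surjectivity I would use that $V_\alpha=\exp(\alpha K)$ has singular values $e^{\alpha},e^{-\alpha},1$, so $\norm{V_\alpha}\geq e^{\abs{\alpha}}$, and the $p$-coercivity gives $\phi(\alpha)=W(V_\alpha)\geq d+c\,e^{p\abs{\alpha}}$. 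This super-linear growth of the convex function $\phi$ forces $\phi'(\alpha)\to+\infty$ as $\alpha\to+\infty$ and $\phi'(\alpha)\to-\infty$ as $\alpha\to-\infty$; being continuous and strictly increasing, $\phi'$, and therefore $s=\tfrac12\phi'$, maps $\R$ bijectively onto $\R$.

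With both ingredients in place the conclusion is immediate. Given $V$ with $\sigmahat(V^2)=s\,(e_1\otimes e_2+e_2\otimes e_1)$, the Kirchhoff stress is $\tau(V)=\det(V)\,s\,K=\tilde s\,K$ with $\tilde s\colonequals\det(V)\,s\in\R$. By surjectivity there is $\alpha\in\R$ with $s(\alpha)=\tilde s$, i.e.\ $\tau(V_\alpha)=\tilde s\,K=\tau(V)$, and injectivity yields $V=V_\alpha$, a finite pure shear stretch of the form \eqref{eq:definitionPureShearStretch}. The trivial case $s=0$ is included: then $\tilde s=0$ and the unique $\alpha$ with $s(\alpha)=0$ produces $V=V_\alpha$ (with $V=\id$ if the reference configuration is stress free). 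I expect the verification that super-linear growth of $\phi$ forces $\phi'\to\pm\infty$, together with the gradient identity $\tau=D\Phi(\log V)$, to be the only points needing care; everything else is bookkeeping.
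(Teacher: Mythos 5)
Your proposal is correct and takes essentially the same approach as the paper's proof: the forward direction follows from the compatibility conditions, injectivity of $V\mapsto\tau(V)$ follows from Hill's strict inequality, and surjectivity of the scalar Kirchhoff shear-stress amount along the one-parameter family of pure shear stretches follows from the coercivity. The remaining differences are cosmetic --- the paper parametrizes by $\lambda=e^{\alpha}$, works in the diagonalized frame (rotating back with $Q$ via isotropy), and shows unboundedness of $s^\tau$ by contradiction rather than via convexity and superlinear growth of $\alpha\mapsto W(\exp(\alpha K))$ with $K=e_1\otimes e_2+e_2\otimes e_1$ --- but the underlying computation (Ogden's formula along the shear line) and the final matching-plus-injectivity step coincide with yours.
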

\begin{proof}
	By assumption, if $V$ is a pure shear stretch, then $\sigma(V)$ is a pure shear stress. In this case, the corresponding Kirchhoff stress $\tau(V)=\det(V)\cdot\sigma(V)$ is a pure shear stress as well. For $\lambda>0$, define $s^\tau(\lambda)$ by
	\[
		\tau(\diag(\lambda,\tfrac1\lambda,1)) = (s^\tau(\lambda),-s^\tau(\lambda),0)\,.
	\]
	We first show that $\lambda\mapsto s^\tau(\lambda)$ is a surjective mapping from $(0,\infty)$ to $\R$. Due to \eqref{eq:ogdenFormula},
	\[
		s^\tau(\lambda) = \lambda\cdot \pdd{W}{\lambda_1} \left(\lambda,\frac1\lambda,1\right)
		\qquad\text{and}\qquad
		-s^\tau(\lambda) = \frac1\lambda\cdot \pdd{W}{\lambda_2} \left(\lambda,\frac1\lambda,1\right)
	\]
	and thus
	\begin{align*}
		\frac{d}{d\lambda} \, W\left(\lambda,\frac1\lambda,1\right) = \pdd{W}{\lambda_1}\left(\lambda,\frac1\lambda,1\right) - \frac{1}{\lambda^2}\, \pdd{W}{\lambda_2}\left(\lambda,\frac1\lambda,1\right) = \frac{2}{\lambda}\,s^\tau(\lambda)\,.
	\end{align*}
	Now, assume that $s^\tau(\lambda)\leq C$ for all $\lambda\in\R^+$ for some $C\in\R$. Then for all $\lambda_0>1$,
	\begin{equation}\label{eq:energyBoundedAssumption}
		W\left(\lambda_0,\frac{1}{\lambda_0},1\right) = W(1,1,1) + \int_1^{\lambda_0} \frac{d}{d\lambda} W\left(\lambda,\frac1\lambda,1\right) \,\intd{\lambda} \leq W(1,1,1) + \int_1^{\lambda_0} \frac{2\.C}{\lambda} \,\intd{\lambda} = W(1,1,1) + 2\.C\.\log(\lambda_0)\,,
	\end{equation}
	in contradiction to the coercivity of $W$, which implies
	\[
		W\left(\lambda_0,\frac{1}{\lambda_0},1\right) \geq d+\ctilde\cdot\dynnorm{\diag\left(\lambda_0,\frac{1}{\lambda_0},1\right)}_2^p = d+\ctilde\cdot\lambda_0^p
	\]
	for some appropriate constant $\ctilde>0$, where $\norm{\,.\,}_2$ denotes the spectral norm. Similarly, we find that $s^\tau(\lambda)$ cannot be bounded below for $\lambda\to0$. In particular, the mapping $\lambda\mapsto s^\tau(\lambda)$ from $(0,\infty)$ to $\R$ is surjective.
	
	Now let $V\in\PSymn$ such that for some $s\in\R$
	\[
		\sigma(V) = \matr{0&s&0\\s&0&0\\0&0&0}\,,\qquad \text{i.e.}\qquad \tau(V) = \matr{0&s^\tau&0\\s^\tau&0&0\\0&0&0}\qquad \text{with }\; s^\tau = \frac{s}{\det V}\,.
	\]
	Choose $\lambda>0$ such that $s^\tau=s^\tau(\lambda)$. Then
	\begin{equation}\label{eq:tauIsotropy}
		\tau(V) = Q\.\diag(s^\tau,-s^\tau,0)\.Q^T = Q\,\tau(\diag(\lambda,\tfrac1\lambda,1))\.Q^T = \tau(Q\.\diag(\lambda,\tfrac1\lambda,1)\.Q)
	\end{equation}
	with $Q\in\SO(3)$ given by \eqref{eq_Q}. Since Hill's strict inequality implies that the mapping $\log V \mapsto \tau(V)$ is strictly monotone and hence injective, the mapping $V \mapsto \tau(V)$ is injective as well. Therefore, \eqref{eq:tauIsotropy} implies that $V=Q\.\diag(\lambda,\tfrac1\lambda,1)\.Q$ is a pure shear stretch.
\end{proof}

\begin{remark}
	Note that in the proof of Proposition \ref{theorem:reverseCondition}, the coercivity of the energy is used only to contradict inequality \eqref{eq:energyBoundedAssumption}. It is easy to see that the condition of $p$-coercivity can be replaced by the weaker requirement that $W$ has \enquote{stronger-than-logarithmic} growth, i.e.\ $\log(\norm{F})\in o(W(\norm{F}))$. In particular, the result is also applicable to the classical quadratic-logarithmic \emph{Hencky energy} \eqref{eq:henckyEnergy} which is not $p$-coercive for any $p\geq1$, but does satisfy $W(F)\geq c\.\log^2(\norm{F})$ for some appropriate $c>0$.\footnote{The correspondence between pure shear stretch and pure shear Cauchy stress for the classical Hencky model was shown earlier by Vall{\'e}e \cite{vallee1978}.}
\end{remark}
\begin{remark}
	In particular, the equalities \eqref{eq:CompatibilityConditions2} required in Theorem \ref{theorem:reverseCondition} hold if the conditions of Theorem \ref{theorem:shearCompatibilityMainResult} are satisfied.
\end{remark}
%
%
%
%
\section*{Conclusion}
While the incompatibility between simple shear and Cauchy pure shear stress described by Destrade et al.\ \cite{destrade2012} and Moon and Truesdell \cite{moon1974interpretation} as well as Mihai and Goriely \cite{mihai2011positive} is due to the difference between the \emph{principal axes} of strain and stress, i.e.\ the eigenspaces of $B=F_\gamma F_\gamma^T$ and $\sigma=\sigma^s$, the question whether or not a pure shear Cauchy stress corresponds to a pure shear stretch $V_\alpha$ depends only on the \emph{principal stretches} and \emph{principal stresses}, i.e.\ the eigenvalues of $B$ and $\sigma$. In particular, this latter question of compatibility is a matter of the constitutive law and depends on the specific choice of a stress response function. Physical reasons suggest that a correspondence between pure shear stretch and pure shear stress is plausible, as further demonstrated by the criteria and examples considered here. As a substitute for the concept of simple shear in finite elasticity, we introduced the notion of an idealized \enquote{finite simple shear} deformation (gradient) which not only corresponds to a pure shear stretch tensor, but is also coaxial to pure shear stress tensors.

In order to avoid the confusion arising from the apparent similarities to the linear case, we suggest to use the term \enquote{\emph{simple glide}} instead of \enquote{simple shear} for deformations $F_\gamma$ in finite elasticity which emphasizes the idea of a body consisting of horizontally displaceable layers.
%
%
%
\section*{Acknowledgement}
We thank David J. Steigmann (UC Berkeley) and Ray W. Ogden (U Glasgow) for discussions on the concept of pure shear stress.

%
%
%
%
\footnotesize
\section{References}
\printbibliography[heading=none]
%
%
%
%
\begin{appendix}
%
%
%
%
\section{Biot pure shear stress}\label{sec:Biot}
The following lemmas and theorems discuss the connection between Biot pure shear stress $\TBiot$ and right finite simple shear deformations $F_\alpha$ analogues to Cauchy pure shear stress. For each of the results presented here, we also provide the reference to the respective counterpart for the Cauchy stress tensor.
	\begin{proposition}[Proposition \ref{prop_F_eindeutig_bestimmt_B}]\label{lem_F_eindeutig_bestimmt_C}
		Let $C\in\PSym(3)$ be given by \eqref{form_von_P} with $p>\abs{q}$ and $r>0$. Then $F \in \GLp(3)$ with $F^TF=C$ is uniquely determined by
		\begin{equation}\label{eq_f_triaxial_stretch_simple_shear_C}
			F =Q\,\diag(a,b,c)\.F_\gamma= Q\,\begin{pmatrix} a & 0 & 0 \\ 0 & b & 0 \\ 0 & 0 & c \end{pmatrix}\begin{pmatrix} 1 & \gamma & 0 \\ 0 & 1 & 0 \\ 0 & 0 & 1 \end{pmatrix}
		\end{equation}
		up to an arbitrary $Q\in\SO(3)$, where
		\begin{equation}
			a = \sqrt{p}\,,\qquad b = \sqrt{\frac{p^2-q^2}p}\,,\qquad c = \sqrt{r}\qquad\text{and}\qquad \gamma = \frac qp\label{eq:abcInpq2}
		\end{equation}
		or, in terms of the singular values $\lambda_1,\lambda_2,\lambda_3\in\R_+$ of $F$,
		\begin{equation}
			a=\sqrt{\frac{\lambda_1^2 + \lambda_2^2}2}\,,\qquad b=\lambda_1\lambda_2\sqrt{\frac 2{\lambda_1^2+\lambda_2^2}}\,,\qquad c=\lambda_3\qquad\text{and}\qquad \gamma=\frac{\lambda_1^2-\lambda_2^2}{\lambda_1^2+\lambda_2^2}\,.\label{abcInLambda2}
		\end{equation}
		In particular, $F$ is necessarily of the form \eqref{eq_f_triaxial_stretch_simple_shear_C} if the Biot stress tensor corresponding to the deformation gradient $F$ induced by an isotropic law of Cauchy elasticity is a pure shear stress of the form $\TBiot=s\.(e_1\otimes e_2+e_2\otimes e_1)$ with $s\in\R$.
	\end{proposition}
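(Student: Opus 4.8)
The plan is to mirror the proof of Proposition \ref{prop_F_eindeutig_bestimmt_B} almost verbatim, interchanging the roles of the two polar factors: here the relevant Cauchy--Green tensor is $C=F^TF$ rather than $B=FF^T$, so the triaxial stretch and simple shear appear in the opposite order and the undetermined rotation sits on the \emph{left}. Concretely, I would first set $\widetilde F=\diag(a,b,c)\.F_\gamma$ with $a,b,c,\gamma$ as in \eqref{eq:abcInpq2} and verify $\widetilde F^T\widetilde F=C$ by direct computation:
\[
	\widetilde F = \diag(a,b,c)\.F_\gamma = \begin{pmatrix} a & a\gamma & 0 \\ 0 & b & 0 \\ 0 & 0 & c \end{pmatrix}\,,\qquad
	\widetilde F^T\widetilde F = \begin{pmatrix} a^2 & a^2\gamma & 0 \\ a^2\gamma & a^2\gamma^2+b^2 & 0 \\ 0 & 0 & c^2 \end{pmatrix}\,.
\]
Matching this against \eqref{form_von_P} gives $a^2=p$, $a^2\gamma=q$, $a^2\gamma^2+b^2=p$ and $c^2=r$, which is solved exactly by the values in \eqref{eq:abcInpq2}; in particular $b^2=p-q^2/p=(p^2-q^2)/p$. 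The only bookkeeping difference from the Cauchy case is that $a$ and $b$ are interchanged, which is precisely the effect of transposing $F$.

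Next I would translate into singular values via Lemma \ref{lem_P_und_T_kommutieren}. Since $C=F^TF$ and $B=FF^T$ share the eigenvalues $\lambda_1^2,\lambda_2^2,\lambda_3^2$, the same relations $p=\tfrac12(\lambda_1^2+\lambda_2^2)$, $q=\tfrac12(\lambda_1^2-\lambda_2^2)$, $r=\lambda_3^2$ hold. Substituting and using $p^2-q^2=(p+q)(p-q)=\lambda_1^2\.\lambda_2^2$ then yields $a=\sqrt{\tfrac{\lambda_1^2+\lambda_2^2}2}$, $b=\lambda_1\lambda_2\sqrt{\tfrac{2}{\lambda_1^2+\lambda_2^2}}$, $c=\lambda_3$ and $\gamma=\tfrac{\lambda_1^2-\lambda_2^2}{\lambda_1^2+\lambda_2^2}$, which is exactly \eqref{abcInLambda2}.

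For the uniqueness-up-to-rotation claim I would invoke the polar decomposition in the form $F=R\sqrt{F^TF}$ with $R\in\SO(3)$: any $F\in\GLp(3)$ is determined by $C=F^TF$ up to a \emph{left}-hand rotation, since $\sqrt C$ is uniquely fixed. Thus if $F^TF=\widetilde F^T\widetilde F=C$, there is $Q\in\SO(3)$ with $F=Q\.\widetilde F=Q\,\diag(a,b,c)\.F_\gamma$, which is \eqref{eq_f_triaxial_stretch_simple_shear_C}. Finally, for the constitutive statement, I would use that for an isotropic response the right Cauchy--Green tensor $C=F^TF$ commutes with the Biot stress $\TBiot(C)$ (cf.\ Remark \ref{remark:distinctEigenvaluesForPureShearStress}); hence if $\TBiot$ is a pure shear stress with $s\neq0$, Lemma \ref{lem_P_und_T_kommutieren} forces $C$ into the form \eqref{form_von_P}, and the preceding steps give \eqref{eq_f_triaxial_stretch_simple_shear_C}.

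All the computations are routine, so I expect no genuine obstacle; the only points demanding care are tracking the correct side of the rotation (left, via $F=R\sqrt{F^TF}$, rather than right) and the attendant swap of $a$ and $b$, together with confirming that the commutation $C\.\TBiot(C)=\TBiot(C)\.C$ indeed holds for the Biot stress in the isotropic setting — the analogue of the $B$--$\sigmahat(B)$ commutativity used in the Cauchy case.
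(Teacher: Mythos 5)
Your proposal is correct and follows essentially the same route as the paper's own proof: verify $\widetilde F^T\widetilde F=C$ for $\widetilde F=\diag(a,b,c)\,F_\gamma$, pass to singular values via Lemma \ref{lem_P_und_T_kommutieren}, and obtain uniqueness up to a \emph{left} rotation $Q\in\SO(3)$ from the polar decomposition $F=Q\sqrt{F^TF}$. In fact your bookkeeping ($a^2=p$, $a^2\gamma=q$, $b^2=(p^2-q^2)/p$) matches the matrix actually displayed in the paper's proof and corrects its prose, which still carries the Cauchy-case identities ($a^2+b^2\gamma^2=b^2=p$, $b^2\gamma=q$, and the ordering $\widetilde F=F_\gamma\,\diag(a,b,c)$) as a copy-paste remnant, and your explicit appeal to the commutativity of $C$ with $\TBiot(C)$ for the final constitutive claim makes precise a step the paper leaves implicit.
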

	\begin{proof}
		Let $\widetilde{F}=F_\gamma\.\diag(a,b,c)$ with $a,b,c$ given by \eqref{eq:abcInpq}. Then $a^2 + b^2\gamma^2 = b^2 = p$, $b^2\gamma = q$ and $c^2=r$, thus
		\begin{align*}
			\widetilde F^T\widetilde F &=  \begin{pmatrix} 1 & 0 & 0 \\ \gamma & 1 & 0 \\ 0 & 0 & 1 \end{pmatrix} \begin{pmatrix} a & 0 & 0 \\ 0 & b & 0 \\ 0 & 0 & c \end{pmatrix}\begin{pmatrix} a & 0 & 0 \\ 0 & b & 0 \\ 0 & 0 & c \end{pmatrix} \begin{pmatrix} 1 & \gamma & 0 \\ 0 & 1 & 0 \\ 0 & 0 & 1 \end{pmatrix}= \begin{pmatrix} a^2 & a^2\gamma & 0 \\ a^2\gamma & a^2\gamma^2 + b^2 & 0 \\ 0 & 0 & c^2 \end{pmatrix} =  \begin{pmatrix} p & q & 0 \\ q & p & 0 \\ 0 & 0 & r \end{pmatrix}\,.
		\end{align*}
	
		Due to \reflemma{lem_P_und_T_kommutieren}, $p=\frac 12(\lambda_1^2+\lambda_2^2)$, $q=\frac 12(\lambda_1^2-\lambda_2^2)$ and $r=\lambda_3^2$, which immediately implies $a=\sqrt{\frac{\lambda_1^2 + \lambda_2^2}2}$, $c=\lambda_3$ and $\gamma=\frac{\lambda_1^2-\lambda_2^2}{\lambda_1^2+\lambda_2^2}$. Moreover, $p+q=\lambda_1^2$ and $p-q=\lambda_2^2$, thus $p^2-q^2=\lambda_1^2\,\lambda_2^2$ and therefore $b=\lambda_1\lambda_2\sqrt{\frac 2{\lambda_1^2+\lambda_2^2}}$.
		
		Now let $B$ be given by \eqref{form_von_P} and consider an arbitrary $F\in\GL^+(3)$ with $F^TF=C=\widetilde F^T\widetilde F$. Since $F$ is uniquely determined by $F^TF$ up to a left-hand rotation, there exists $Q\in\SO(3)$ with $F=Q\.\widetilde F$, thus $F$ is of the form \eqref{eq_f_triaxial_stretch_simple_shear_C}.
	\end{proof}
	\begin{lemma}[Lemma \ref{lem_F_simple_finite_shear_B}]\label{lem_F_simple_finite_shear_C}
		Let the stretch $C=P$ be of the form \eqref{form_von_P_shear}. Then the deformation gradient $F \in \GLp(3)$ with $F^TF=C$ is of the form
		\begin{equation}
			F = Q\cdot\begin{pmatrix} \sqrt{\cosh(2\alpha)} & \frac{\sinh(2\alpha)}{\sqrt{\cosh(2\alpha)}} & 0 \\ 0 & \frac 1{\sqrt{\cosh(2\alpha)}} & 0 \\ 0 & 0 & 1\end{pmatrix}= Q\cdot\frac 1{\sqrt{\cosh(2\alpha)}}\begin{pmatrix} \cosh(2\alpha) & \sinh(2\alpha) & 0 \\ 0 & 1 & 0 \\ 0 & 0 & \sqrt{\cosh(2\alpha)}\end{pmatrix}\,.
		\end{equation}
		with arbitrary $Q\in\SO(3)$.
	\end{lemma}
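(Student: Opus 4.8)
The plan is to mirror the proof of Lemma \ref{lem_F_simple_finite_shear_B} verbatim, replacing the Cauchy-type decomposition $F = F_\gamma\,\diag(a,b,c)\,Q$ by the Biot-type decomposition $F = Q\,\diag(a,b,c)\,F_\gamma$ established in Proposition \ref{lem_F_eindeutig_bestimmt_C}. The key observation is that the hypothesis $C = P$ of the form \eqref{form_von_P_shear} means $\sqrt{C}$ is a finite pure shear stretch $V_\alpha$, so the singular values of $F$ are forced to be $\lambda_1 = e^\alpha$, $\lambda_2 = e^{-\alpha}$, $\lambda_3 = 1$.

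First I would invoke Proposition \ref{lem_F_eindeutig_bestimmt_C}, which guarantees that any $F \in \GLp(3)$ with $F^T F = C$ has the form \eqref{eq_f_triaxial_stretch_simple_shear_C}, with the parameters $a,b,c,\gamma$ given by \eqref{abcInLambda2} in terms of the singular values and with arbitrary $Q \in \SO(3)$. Substituting $\lambda_1 = e^\alpha$, $\lambda_2 = e^{-\alpha}$, $\lambda_3 = 1$ and simplifying via $e^{2\alpha} + e^{-2\alpha} = 2\cosh(2\alpha)$ and $e^{2\alpha} - e^{-2\alpha} = 2\sinh(2\alpha)$ yields
\[
	\gamma = \tanh(2\alpha), \qquad a = \sqrt{\cosh(2\alpha)}, \qquad b = \frac{1}{\sqrt{\cosh(2\alpha)}}, \qquad c = 1.
\]
Note that the roles of $a$ and $b$ are interchanged relative to the Cauchy computation in Lemma \ref{lem_F_simple_finite_shear_B}, reflecting the fact that the triaxial stretch now sits on the \emph{left} of the simple shear.

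Next I would carry out the matrix product $Q\,\diag(a,b,c)\,F_\gamma$ explicitly, using the identity $\sqrt{\cosh(2\alpha)}\,\tanh(2\alpha) = \sinh(2\alpha)/\sqrt{\cosh(2\alpha)}$ to identify the $(1,2)$ entry; this produces the first claimed form. Factoring out the common scalar $1/\sqrt{\cosh(2\alpha)}$ then yields the second, equivalent form, which completes the computation.

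There is no substantive obstacle here: the argument is a routine specialization of the already-proved Proposition \ref{lem_F_eindeutig_bestimmt_C}. The only points requiring minor care are purely bookkeeping — keeping the factorization order correct ($Q$ and $\diag(a,b,c)$ to the \emph{left} of $F_\gamma$, as dictated by $F^T F = C$ determining $F$ only up to a left-hand rotation) and correctly tracking the interchange of $a$ and $b$ against the Cauchy case.
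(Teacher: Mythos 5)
Your proposal is correct and follows essentially the same route as the paper's own proof: both invoke Proposition \ref{lem_F_eindeutig_bestimmt_C}, substitute the singular values $\lambda_1=e^\alpha$, $\lambda_2=e^{-\alpha}$, $\lambda_3=1$ into \eqref{abcInLambda2} to obtain $\gamma=\tanh(2\alpha)$, $a=\sqrt{\cosh(2\alpha)}$, $b=\tfrac{1}{\sqrt{\cosh(2\alpha)}}$, $c=1$, and then multiply out $Q\,\diag(a,b,c)\,F_\gamma$ and factor out $\tfrac{1}{\sqrt{\cosh(2\alpha)}}$. Your added remarks --- the interchange of $a$ and $b$ relative to the Cauchy case and the left-hand placement of $Q$ forced by the left-rotation ambiguity of $F^TF=C$ --- are exactly the bookkeeping the paper's computation carries out implicitly.
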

	\begin{proof}
		We use Proposition \ref{lem_F_eindeutig_bestimmt_C} and write $\lambda_1=e^\alpha$, $\lambda_2=e^{-\alpha}$, $\lambda_3=1$. Then
		\begin{align*}
			\gamma&=\frac{\lambda_1^2-\lambda_2^2}{\lambda_1^2+\lambda_2^2}  = \frac{e^{2\alpha}-e^{-2\alpha}}{e^{2\alpha}+e^{-2\alpha}} = \tanh(2\alpha)\,,\qquad a = \sqrt{\frac{\lambda_1^2+\lambda_2^2}2}  = \sqrt{\frac{e^{2\alpha} + e^{-2\alpha}}2} = \sqrt{\cosh(2\alpha)}\,,\\
			b &=  \lambda_1\lambda_2\sqrt{\frac 2{\lambda_1^2+\lambda_2^2}}  = e^{\alpha}e^{-\alpha}\cdot\sqrt{\frac 2{e^{2\alpha} + e^{-2\alpha}}} = \frac 1{\sqrt{\cosh(2\alpha)}}\qquad\text{and}\qquad	c = \lambda_3  = 1
		\end{align*}
		and thus
		\begin{align*}
			F &=  Q\begin{pmatrix} a & 0 & 0 \\ 0 & b & 0 \\ 0 & 0 & c \end{pmatrix}\begin{pmatrix} 1 & \gamma & 0 \\ 0 & 1 & 0 \\ 0 & 0 & 1 \end{pmatrix}= Q\begin{pmatrix} \sqrt{\cosh(2\alpha)} & 0 & 0 \\ 0 & \frac 1{\sqrt{\cosh(2\alpha)}} & 0 \\ 0 & 0 & 1 \end{pmatrix}\begin{pmatrix} 1 & \tanh(2\alpha) & 0 \\ 0 & 1 & 0 \\ 0 & 0 & 1 \end{pmatrix}\\
			&= Q\cdot\,\frac 1{\sqrt{\cosh(2\alpha)}}\begin{pmatrix} \cosh(2\alpha) & \sinh(2\alpha) & 0 \\ 0 & 1 & 0 \\ 0 & 0 & \sqrt{\cosh(2\alpha)} \end{pmatrix}\,.\qedhere
		\end{align*}
	\end{proof}
	\begin{figure}[h!]
	    \centering
	    \tikzsetnextfilename{PolarDecompositionFiniteSimpleShear2}
	    \begin{tikzpicture}
	        \input{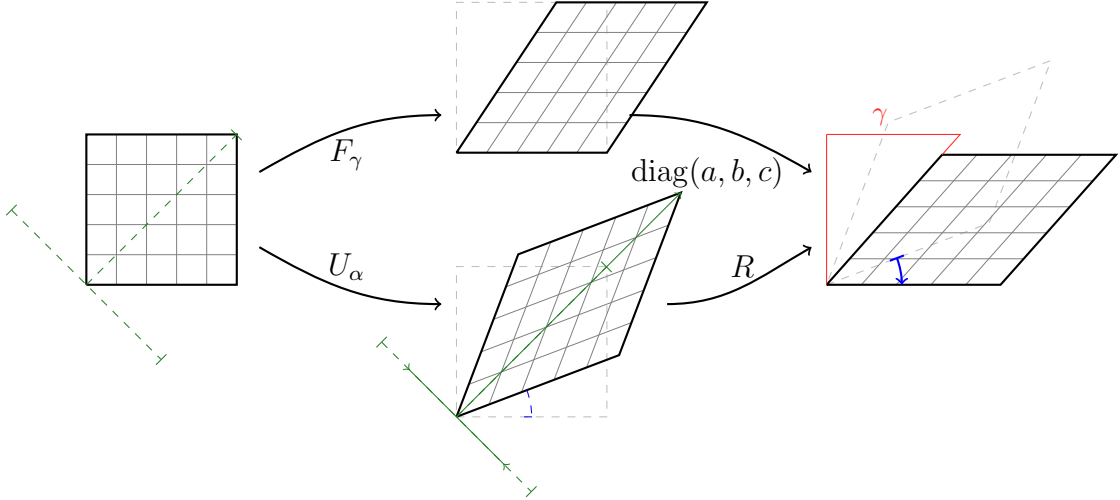}
	    \end{tikzpicture}
	    \caption{Right finite simple shear deformation with amount of shear $\gamma=\sinh(2\alpha)$.}
	\end{figure} 
	\begin{theorem}[Theorem \ref{theorem:main}]\label{theorem:main2}
		Any shear deformation\textsuperscript{\ref{footnote}} $F\in\GLp(3)$ that results in a (non-trivial) pure shear Biot stress tensor $\TBiot$ is a \emph{right finite simple shear deformation}
		\begin{equation}
			F_\alpha = \frac 1{\sqrt{\cosh(2\alpha)}}\matr{ \cosh(2\alpha) & \sinh(2\alpha) & 0 \\ 0 & 1 & 0 \\ 0 & 0 & \sqrt{\cosh(2\alpha)} }\qquad\text{with $\alpha\in\R$}\,.
		\end{equation}
	\end{theorem}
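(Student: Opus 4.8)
The plan is to run the proof of Theorem~\ref{theorem:main} in parallel, replacing the left Cauchy--Green tensor $B=FF^T$ and the Cauchy stress by the right Cauchy--Green tensor $C=F^TF$ and the Biot stress $\TBiot$. The single structural fact I rely on is that for \emph{any} isotropic elastic law the right Cauchy--Green tensor $C$ commutes with the associated Biot stress $\TBiot$, in exact analogy to the commutation of $B$ with the Cauchy stress (cf.\ Remark~\ref{remark:distinctEigenvaluesForPureShearStress} and \cite[Theorem~4.2.4]{Ogden83}). Everything else is supplied by the Biot counterparts of the Cauchy lemmas already established in this appendix.

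First I would fix a nontrivial Biot pure shear stress $\TBiot$ (amount $s\neq0$) and let $F$ be an idealized shear deformation in the sense of Definition~\ref{def:shearDeformation}. Conditions i) and ii) force the singular values of $F$ to be $\lambda_1=\lambda$, $\lambda_2=\frac1\lambda$, $\lambda_3=1$, so $C=F^TF$ has eigenvalues $\lambda^2,\frac1{\lambda^2},1$. Since $\TBiot$ is a pure shear stress and the law is isotropic, $C$ and $\TBiot$ commute. Lemma~\ref{lem_form_von_p_kommutieren_mit_t}---stated for an arbitrary $P\in\PSym(3)$ with precisely this eigenvalue structure commuting with a pure shear stress tensor, and hence applicable verbatim to $C$ and $\TBiot$---then yields that $U=\sqrt{C}=U_\alpha$ is a finite pure shear stretch, i.e.\ that $C$ is of the form~\eqref{form_von_P_shear}.

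With $C$ constrained to the form~\eqref{form_von_P_shear}, I would invoke Lemma~\ref{lem_F_simple_finite_shear_C} (itself a consequence of Proposition~\ref{lem_F_eindeutig_bestimmt_C}, the Biot analogue of Proposition~\ref{prop_F_eindeutig_bestimmt_B}), which has already carried out the computation showing that every $F\in\GLp(3)$ with $F^TF=C$ equals $Q$ times the right finite simple shear matrix for some $Q\in\SO(3)$. The residual freedom is exactly this \emph{left}-hand rotation $Q$, reflecting that $F$ is determined by $F^TF$ only up to a left factor in $\SO(3)$---the mirror image of the right-hand ambiguity encountered in the Cauchy case.

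The only genuinely new step is to remove $Q$ using the ground-parallelism condition iii). I would verify directly that the right finite simple shear matrix already has $e_1$ and $e_3$ as eigenvectors, with eigenvalues $\sqrt{\cosh(2\alpha)}$ and $1$ respectively, so that---just as $Q=\id$ is singled out in the proof of Theorem~\ref{theorem:main}---condition iii) forces $Q=\id$ and identifies $F$ with the asserted right finite simple shear deformation $F_\alpha$. I do not anticipate any real obstacle here: all of the computational content lives in the already-established Lemma~\ref{lem_form_von_p_kommutieren_mit_t} and Proposition~\ref{lem_F_eindeutig_bestimmt_C}/Lemma~\ref{lem_F_simple_finite_shear_C}, so the argument reduces to chaining these together and checking the eigenvector condition that pins down the rotation.
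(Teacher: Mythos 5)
Your proposal is correct and follows essentially the same route as the paper's own proof: commutativity of $C=F^TF$ with $\TBiot$ for isotropic laws, Lemma \ref{lem_form_von_p_kommutieren_mit_t} to force $C$ into the form \eqref{form_von_P_shear}, Lemma \ref{lem_F_simple_finite_shear_C} to recover $F$ up to a rotation $Q\in\SO(3)$, and ground parallelism to pin down $Q=\id$. In fact you are slightly more careful than the paper on one point: you correctly identify the residual ambiguity as a \emph{left}-hand rotation (consistent with $F=Q\.\widetilde F$ in Lemma \ref{lem_F_simple_finite_shear_C}), whereas the paper's proof text calls it a \enquote{right-hand side rotation}, an apparent carry-over from the Cauchy case.
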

	\begin{proof}[Proof of Theorem \ref{theorem:main2}]
		Let $\TBiot$ be a pure shear stress tensor with $s\neq 0$ and $C\in\PSym(3)$ with $\lambda_1=\lambda$, $\lambda_2=\frac 1\lambda$, $\lambda_3=1$ for arbitrary $\lambda\in\R_+$ with $\lambda_1^2,$ $\lambda_2^2,$ $\lambda_3^2$ as the eigenvalues of $C$. By \reflemma{lem_form_von_p_kommutieren_mit_t}, the tensors $C$ and $\hTBiot(C)$ commute if and only if $C=U^2$ is of the form \eqref{form_von_P_shear}, i.e.\ if and only if
		\begin{align}
			U = \matr{\cosh(\alpha)&\sinh(\alpha)&0\\\sinh(\alpha)&\cosh(\alpha)&0\\0&0&1}=\exp\matr{0&\alpha&0\\\alpha&0&0\\0&0&0}\qquad\text{with $\alpha\in\R$}
		\end{align}
		is a finite pure shear stretch.
	
		For an isotropic elasticity law the right Cauchy-Green deformation tensors $C=F^TF$ commutes with $\hTBiot(C)$.
		Thus if $\hTBiot(C)$ is a Biot pure shear stress, then $C=F^TF$ is of the form \eqref{form_von_P_shear} (i.e.\ $U$ is a finite pure shear stretch). By \reflemma{lem_F_simple_finite_shear_C}, the deformation gradient $F$ is determined up to an arbitrary right-hand side rotation $Q\in\SO(3)$, and it remains to observe that exactly for $Q=\id$, the resulting deformation $F_\alpha$ satisfies the condition of ground parallelism (i.e.\ has eigenvectors $e_1$ and $e_3$).
	\end{proof}
	\begin{remark}
		Since in isotropic elasticity, the \emph{second Piola-Kirchhoff stress tensor} $S^2$ commutes with the right Cauchy-Green deformation tensor $C=F^TF$ as well, the above results still hold if Biot stress $\TBiot$ is replaced by $S^2$.
	\end{remark}
	It might also be of interest to investigate conditions on the energy $W$ which ensure that the Biot stress $\TBiot$ corresponding to a finite pure shear stretch $U_\alpha$ is a pure shear stress. Here we state, without proof, a result for hyperelastic models with energy functions of Valanis-Landel type.
	\begin{proposition}[Corollary \ref{corollary:pureShearValanisLandel}]\label{prop:BiotPureShearValanisLandel}
		Let $W$ be an isotropic elastic energy of the \emph{generalized Valanis-Landel form}, i.e.
		\[
			W(F) \;=\; \sum_{i=1}^3 w(\lambda_i) \;+\; f(\det F) \;=\; w(\lambda_1) + w(\lambda_2) + w(\lambda_3) + f(\lambda_1\,\lambda_2\,\lambda_3)
		\]
		for all $F\in\GLp(3)$ with singular values $\lambda_1,\lambda_2,\lambda_3$, where $w,f\col\Rp\to\R$ are differentiable functions. If the reference configuration is stress free and $w$ satisfies $w'\left(\frac1 \lambda\right) = -w'(\lambda)$ for all $\lambda\in\Rp$, then $\hTBiot(C)=\hTBiot(U^2)$ is a pure shear stress for every finite pure shear stretch $U=U_\alpha$.
	\end{proposition}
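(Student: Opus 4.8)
The plan is to mirror the proof of Corollary \ref{corollary:pureShearValanisLandel}, substituting the Cauchy stress formula \eqref{eq:ogdenFormula} by its Biot counterpart. For an isotropic hyperelastic energy written in the singular-value form \eqref{eq:energySingularValueRepresentation}, the principal Biot stresses are $\TBiot_i=\frac{\partial W}{\partial\lambda_i}(\lambda_1,\lambda_2,\lambda_3)$, i.e.\ the Biot analogue of \eqref{eq:ogdenFormula} without the weight $\frac{\lambda_i}{\lambda_1\lambda_2\lambda_3}$ (cf.\ \cite{Ogden83}; equivalently $\TBiot_i=\tau_i/\lambda_i$ in terms of the principal Kirchhoff stresses $\tau_i$). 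First I would note that, by isotropy, $\hTBiot(C)$ commutes with $C=U^2$ (cf.\ Remark \ref{remark:distinctEigenvaluesForPureShearStress}), so that for $U=U_\alpha$ with eigenvalues $\lambda_1=\lambda$, $\lambda_2=\frac1\lambda$, $\lambda_3=1$ the Biot stress is diagonalized by the same rotation $Q$ as in \eqref{eq:revision1}. Exactly as for the Cauchy stress in \eqref{eq:CompatibilitySigmas}, $\hTBiot(U^2)$ is then a pure shear stress if and only if $\TBiot_1+\TBiot_2=0$ and $\TBiot_3=0$, i.e.
\[
	\frac{\partial W}{\partial\lambda_1}\Bigl(\lambda,\tfrac1\lambda,1\Bigr)+\frac{\partial W}{\partial\lambda_2}\Bigl(\lambda,\tfrac1\lambda,1\Bigr)=0 \quad\text{ and }\quad \frac{\partial W}{\partial\lambda_3}\Bigl(\lambda,\tfrac1\lambda,1\Bigr)=0 \qquad\text{for all }\lambda\in\Rp\,,
\]
which is the Biot analogue of \eqref{eq:CompatibilityConditions2}, now carrying the \emph{unweighted} sum $\TBiot_1+\TBiot_2$.

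Next I would insert the Valanis-Landel form. Differentiating $W=\sum_i w(\lambda_i)+f(\lambda_1\lambda_2\lambda_3)$ gives
\[
	\frac{\partial W}{\partial\lambda_i}(\lambda_1,\lambda_2,\lambda_3)=w'(\lambda_i)+\frac{\lambda_1\lambda_2\lambda_3}{\lambda_i}\,f'(\lambda_1\lambda_2\lambda_3)\,.
\]
The step most worth stating carefully is the reduction of the volumetric term: evaluating the hypothesis $w'(\frac1\lambda)=-w'(\lambda)$ at $\lambda=1$ gives $w'(1)=0$, and substituting this into the stress-free condition $\hTBiot(\id)=0$, i.e.\ $w'(1)+f'(1)=0$, forces $f'(1)=0$. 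In contrast to the Cauchy case, where $w'(1)=0$ follows from tension-compression symmetry, here it is read off directly from the antisymmetry assumption on $w'$.

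Finally I would restrict to the pure-shear locus $\lambda_1=\lambda$, $\lambda_2=\frac1\lambda$, $\lambda_3=1$, where $\det F=1$ and hence $f'(\det F)=f'(1)=0$, so the volumetric contribution drops out and $\frac{\partial W}{\partial\lambda_i}=w'(\lambda_i)$. The condition $\frac{\partial W}{\partial\lambda_3}=w'(1)=0$ then holds, while $\frac{\partial W}{\partial\lambda_1}+\frac{\partial W}{\partial\lambda_2}=w'(\lambda)+w'(\frac1\lambda)=0$ is exactly the hypothesis $w'(\frac1\lambda)=-w'(\lambda)$; both compatibility conditions are thus satisfied for every $\lambda\in\Rp$, so $\hTBiot(U_\alpha^2)$ is a pure shear stress. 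I do not anticipate a genuine obstacle: the argument is routine once the correct principal Biot-stress formula is fixed, the only subtlety being that the pure-shear condition for $\TBiot$ involves the unweighted eigenvalue sum (so that antisymmetry of $w'$, rather than tension-compression symmetry of $w$, is the natural hypothesis), together with the derivation of $f'(1)=0$.
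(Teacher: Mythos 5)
Your proof is correct and follows essentially the same route as the paper's own (the paper states the proposition without printing the proof, but its argument is the identical computation): by isotropy and coaxiality the question reduces to the principal Biot stresses $T_i=\frac{\partial W}{\partial\lambda_i}=w'(\lambda_i)+\frac{\lambda_1\lambda_2\lambda_3}{\lambda_i}\,f'(\lambda_1\lambda_2\lambda_3)$, which on the locus $(\lambda,\frac1\lambda,1)$ with $f'(1)=0$ give $T_3=w'(1)=0$ and $T_1+T_2=w'(\lambda)+w'\bigl(\tfrac1\lambda\bigr)=0$. Your small additional step --- deriving $w'(1)=0$ from the antisymmetry of $w'$ at $\lambda=1$ and then $f'(1)=0$ from the stress-free reference configuration --- is exactly the right way to connect the proposition's stated hypotheses to that computation.
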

	For example, the requirement $w'\left(\frac1 \lambda\right) = -w'(\lambda)$ from Proposition \ref{prop:BiotPureShearValanisLandel} is satisfied by the function $w$ originally proposed by Valanis and Landel as a model for incompressible materials \cite[eq.\ (48)]{valanis1967}, which they defined via the equality $w'(\lambda)=2\mu\.\log(\lambda)$, i.e.\ (assuming $W(\id)=0$)
	\[
		w(t) = 2\mu\,(t\,(\log (t) - 1) + 1)\,,
	\]
	which corresponds to the energy
	\begin{equation}
		W(U) = 2\mu\,\iprod{U,\; \log U-\id} + 3\,.
	\end{equation}
	If this energy function is applied in the compressible case without an additional volumetric term, it induces \emph{Becker's law of elasticity} \cite{agn_neff2014rediscovering} in the lateral contraction free case (zero Poission's ratio). This constitutive law was first described by the geologist G.F.\ Becker \cite{becker1893}, who deduced it from a number of axioms including the equivalence of finite pure shear stretch $U$ and Biot pure shear stress; note that for Becker's stress-strain relation
	\begin{align}
		\TBiot(U)=2\mu\.\log U+\lambda\.\tr(\log U)\.\id\,,
	\end{align}
	every finite pure shear stretch $U=U_\alpha$ corresponds to a Biot pure shear stress and vice versa (cf.\ \cite{vallee1978}), even in the non-hyperelastic case $\lambda\neq0$. In fact, this compatibility property was one of Becker's major motivations for introducing his logarithmic constitutive model \cite{becker1893,agn_neff2014rediscovering}.
\section{Linearization of finite simple shear}\label{appendix:LinearShear}
In order to verify that the notions of finite simple shear and finite pure shear stretch (as well as the rotation $R$ given in \eqref{eq:polarFiniteShearFactors}) are compatible to the corresponding concepts of shear in linear elasticity via the identification $\gamma=2\.\alpha$, simply compare the linearizations
	\begin{align*}
		&\frac{1}{\sqrt{\cosh(2\.\alpha)}}\, \matr{1&\sinh(2\.\alpha)&0\\ 0&\cosh(2\.\alpha)&0\\ 0&0&\sqrt{\cosh(2\.\alpha)}} = \matr{1&2\.\alpha&0\\0&1&0\\0&0&1} + \mathcal O(\alpha^2)\,,\\
		&\frac{1}{\sqrt{\cosh(2\.\alpha)}}\, \matr{\cosh(2\.\alpha)&\sinh(2\.\alpha)&0\\ 0&1&0\\ 0&0&\sqrt{\cosh(2\.\alpha)}} = \matr{1&2\.\alpha&0\\0&1&0\\0&0&1} + \mathcal O(\alpha^2)\,,\\
		&V_\alpha = \begin{pmatrix} \cosh(\alpha) & \sinh(\alpha) & 0 \\ \sinh(\alpha) & \cosh(\alpha) & 0 \\ 0 & 0 & 1 \end{pmatrix} = \id + \matr{ 0 & \alpha & 0 \\ \alpha & 0 & 0 \\ 0 & 0 & 0} + \mathcal O(\alpha^2)
	\intertext{and}
		&R = \frac 1{\sqrt{\cosh(2\alpha)}}\begin{pmatrix} \cosh(\alpha) & \sinh(\alpha) & 0 \\ -\sinh(\alpha) & \cosh(\alpha) & 0 \\ 0 & 0 & \sqrt{\cosh(2\alpha)} \end{pmatrix} = \id + \matr{ 0 & \alpha & 0 \\ -\alpha & 0 & 0 \\ 0 & 0 & 0} + \mathcal O(\alpha^2)
	\end{align*}
	to Definitions \ref{definition:linearSimpleShear} and \ref{def:linearShear}.
%
%
%
%
%
%
\section{Shear monotonicity}
\label{appendix:shearMonotonicity}

Although simple shear is generally not a suitable concept for nonlinear elasticity if (Cauchy) pure shear stresses are concerned, it does occur in finite deformations under certain displacement boundary conditions in the context of so-called \emph{anti-plane shear deformations} of the form $\varphi(x_1,x_2,x_3)=(x_1\,,x_2\,,x_3+u(x_1,x_2))$. In the following, we discuss the concept of \emph{shear monotonicity}, which has been the subject of a recent contribution by Voss et al.\ \cite{voss2018aps}.

Recall from Remark \ref{remark:simpleShearBC} that for a simple shear $F=F_\gamma$, the left Cauchy-Green tensor $B=FF^T$ is given by
\begin{align}
	B=F_\gamma F_\gamma^T=\left(\begin{matrix}1+\gamma^2&\gamma&0\\\gamma&1&0\\0&0&1\end{matrix}\right)
	\qquad\text{ and thus }\qquad
	B^{-1}=\left(\begin{matrix}1&-\gamma&0\\-\gamma&1+\gamma^2&0\\0&0&1\end{matrix}\right)\,,\label{eq:simpleshear}
\end{align}
and from \eqref{eq:CauchyInBetas} that in the isotropic case, the Cauchy stress tensor can always be expressed in the form
\begin{align}
	\sigma=\beta_0\.\id+\beta_1\.B+\beta_{-1}\.B^{-1}
\end{align}	
with scalar-valued functions $\beta_0$, $\beta_1$, $\beta_{-1}$ depending on the matrix invariants $I_1$, $I_2$, $I_3$ of the stretch tensor $B=FF^T$. In the case \eqref{eq:simpleshear} of simple shear, we find
\begin{align}
	I_1&=\tr B=3+\gamma^2\,,\qquad I_2=\tr(\Cof B)=\tr\left(\begin{matrix}1&-\gamma&0\\-\gamma&1+\gamma^2&0\\0&0&1\end{matrix}\right)=3+\gamma^2\,,\qquad I_3=\det B=1\,.
\end{align}
This allows us to consider $\beta_i$ as scalar-valued functions depending only on the amount of shear $\gamma\in\Rp.$ The Cauchy stress tensor $\sigma$ corresponding to a simple shear deformation $F_\gamma$ with the amount of shear $\gamma\in\R$ is given by
\begin{align}
	\sigma&=(\beta_0+\beta_1+\beta_{-1})\.\id+\left(\begin{matrix}\beta_1\.\gamma^2&(\beta_1-\beta_{-1})\.\gamma&0\\(\beta_1-\beta_{-1})\.\gamma&\beta_{-1}\.\gamma^2&0\\0&0&0\end{matrix}\right).\label{eq:CauchyStressSimpleShear}
\end{align}
Again, we can observe that a simple shear deformation does not map to a non-trivial Cauchy pure shear stress, since the Cauchy stress tensor of a simple shear deformation is a pure shear stress if and only if
\begin{align}
	\begin{matrix}
		\beta_0+(1+\gamma^2)\.\beta_1+\beta_{-1}&=0\,,\\
		\beta_0+\beta_1+(1+\gamma^2)\.\beta_{-1}&=0\,,\\
		\beta_0+\beta_1+\beta_{-1}&=0\,,
	\end{matrix}\qquad\implies\qquad\begin{matrix}
		\gamma^2\.(\beta_1-\beta_{-1})&=0\,,\\
		\beta_0+\beta_1+\beta_{-1}&=0\,,
	\end{matrix}
\end{align}
and for $\gamma\neq0$, the equality $\gamma^2\.(\beta_1-\beta_{-1})=0$ is satisfied if and only if $\beta_1-\beta_{-1}=0$, which results in an amount of shear stress $s=\gamma\.(\beta_1-\beta_{-1})=0$.

Now, consider the entry $\sigma_{12}(\gamma)=(\beta_1-\beta_{-1})\.\gamma$ as the amount of stress in $e_1$-direction acting on a surface perpendicular to $e_2$. From a physical point of view, increasing the amount of shear $\gamma$ should clearly lead to an increase in the amount of Cauchy shear stress; this material property is also known as shear monotonicity \cite{voss2018aps}. Since
\begin{align}
	\sigma_{12}(\gamma)&=(\beta_1-\beta_{-1})\.\gamma=2\gamma\left.\left(\frac{\partial W}{\partial I_1}+\frac{\partial W}{\partial I_2}\right)\right|_{I_1=I_2=3+\gamma^2,I_3=1}=\frac{d}{d\gamma}\.W(3+\gamma^2,3+\gamma^2,1)\,,
\end{align}
the condition for shear-monotonicity is given by
\begin{align}
	\frac{d}{d\gamma}\.\sigma_{12}(\gamma)=\frac{d^2}{(d\gamma)^2}\.W(3+\gamma^2,3+\gamma^2,1)>0\qquad\qquad\forall\,\gamma\geq  0\,,
\end{align}
which is equivalent to the convexity of the mapping $\gamma\mapsto W(3+\gamma^2,3+\gamma^2,1)$. This convexity property, known as \emph{APS-convexity}, is equivalent to the convexity of the restriction of the energy functional $W(F)$ to the set of APS-deformations (i.e.\ deformations $\varphi$ of the form $\varphi=(x_1,x_2,x_3+u(x_1,x_2))$).
In particular, APS-convexity is implied by rank-one convexity \cite{voss2018aps}, thus every Cauchy stress response induced by a rank-one convex energy is shear monotone.

\end{appendix}
\end{document}